\documentclass{amsart}
\usepackage[ngerman,english]{babel} 
\usepackage[utf8]{inputenc} 
\usepackage[T1]{fontenc} 
\usepackage{csquotes}
\usepackage[margin=2cm]{geometry}
\usepackage{blkarray}
\usepackage{nicematrix}
\usepackage{tabularray}
\usepackage{subfig}
\usepackage[dvipsnames]{xcolor}
\usepackage{amssymb}
\usepackage{amsthm}
\usepackage{amsmath}
\usepackage{pgf,tikz}
\usepackage{tikz-cd}
\usetikzlibrary{decorations.pathmorphing,shapes,calc,babel}
\usepackage{graphicx}
\usepackage{xcolor}
\usepackage{caption}
\usepackage{subcaption}
\usepackage{trimspaces}
\usepackage{svg}
\usepackage{makecell}

\usepackage{hyperref}
\hypersetup{pdfauthor={Veronika K\"orber}, pdftitle={Triangulations and maximal cross-ratio degrees}, pdfkeywords={cross-ratio, degree, maximal, tropical geometry}}

\usepackage{thmtools}
\usepackage[capitalise]{cleveref}

\usepackage[onehalfspacing]{setspace} 
\usetikzlibrary{decorations.markings}

\renewcommand{\epsilon}{\varepsilon}

\theoremstyle{plain}
\newtheorem{theorem}{Theorem}[section]
\newtheorem*{theorem*}{Theorem}
\newtheorem{remark}[theorem]{Remark}
\newtheorem{corollary}[theorem]{Corollary}
\newtheorem{lemma}[theorem]{Lemma}
\newtheorem{proposition}[theorem]{Proposition}
\newtheorem{construction}[theorem]{Construction}
\theoremstyle{definition}
\newtheorem{definition}[theorem]{Definition}
\newtheorem{example}[theorem]{Example}
\theoremstyle{remark}

\newcommand{\U}{\mathcal{U}}
\newcommand{\M}{M}
\newcommand{\R}{\mathbb{R}}
\newcommand{\cros}[4]{(#1 #2|#3 #4)}
\newcommand{\abs}[1]{\left\lvert#1\right\rvert}

\DeclareMathOperator {\mult}{mult}

\usepackage[backend=bibtex,style=alphabetic, maxnames=99]{biblatex}

\hypersetup{ colorlinks=false, linkcolor=black, filecolor=black, urlcolor=black }

\subjclass[2020]{\noindent Primary 14T15 $\cdot$ Secondary 14N10 $\cdot$  14T20}
\keywords{Triangulations, cross-ratios, degree, maximal}

\title{Triangulations and maximal cross-ratio degrees}
\author[V. Körber]{Veronika Körber}
\email{veronika.koerber@uni-tuebingen.de}
\address{Eberhard Karls Universität Tübingen,
Geschwister-Scholl-Platz,\newline
72074 Tübingen, Germany}

\begin{document}
\begin{abstract}
	The cross-ratio degree problem is about counting rational curves with $n$ marked points satisfying $n-3$ cross-ratio conditions. This problem has a tropical analogue which provides the same number, as shown by a correspondence theorem. In general, there are no closed formulas for this counting problem. In the special case of cross-ratio conditions given by triangulations, a formula was found in \cite{R24} via techniques of algebraic geometry. 
	
	We study the cross-ratio problem given by triangulations in the tropical world. In addition to computing the cross-ratio degree by tropical means, we provide concrete solutions for the counting problem in arbitrary settings, thus answering the question in \cite[Remark 1.3]{R24}. 
	
	We also use the tropical recursive algorithm by Goldner in \cite{Go21} to provide a new computational tool to compute cross-ratio degrees. With this, we can find the maximal cross-ratio degrees for $n=9,$ $n=10$ and $n=12,$ where the latter case also makes use of a theory developed in \cite{BELL25}. Previously, these numbers were only known up to $n=8.$
\end{abstract} 
\maketitle

\section{Introduction}
Given a line with $n$ marked points, then we can evaluate $n-3$ cross-ratios, i.e. $n-3$ four-tuples of the $n$ points. We denote the set of these four-tuples by $\U.$ This turns into a map $\Pi_\U: \overline{\M}_{0,n} \to (\overline{\M}_{0,4})^{n-3},$ where $\overline{\M}_{0,n}$ denotes the moduli space of $n$-marked rational stable curves. The cross-ratio degree $D_\U$ is the degree of this map. However, this number is in general hard to find and there are no universal formulas yet to determine the degree. 

The same counting problem can be phrased in the tropical world. There, a tropical cross-ratio describes a relation between four marked ends of a tropical curve, see Definition \ref{defCR}, and we consider a map $\pi_\U: \M_{0,n}^{\mathrm{trop}} \to (\M_{0,4}^{\mathrm{trop}})^{n-3}$ between tropical moduli spaces, see Definition \ref{defmod}, and its degree $d_\U,$ see Definition \ref{mappi}. It holds that $d_\U=D_\U,$ which can be seen by the tropical correspondence principles, see \cite{Ty17}, \cite{Ra17}, \cite{Gr16}, \cite{GM10}, \cite{Te07}, \cite{Mi06}, \cite{Ra16} and \cite{FS97}. 

 In this paper, we focus on the computation of $d_\U,$ i.e. on the tropical side. The degree $d_\U$ equals a sum over preimages of $\pi_\U$ (i.e. $n$-marked rational tropical curves), counted with multiplicity. Here, we do not only care about the numbers $d_\U,$ but also about the set of preimages for all possible combinatorial settings of images in $(\M_{0,4}^{\mathrm{trop}})^{n-3}.$

Progress in the counting problem was made by Goldner in \cite{Go21b} and \cite{Go21}, where a recursive algorithm was described that provides the degree for a given set of cross-ratios. With this algorithm, we are now able to find the solution for any explicit set of cross-ratios but still there is no general formula for obtaining this degree using just the combinatorics behind the selected markings (i.e. which markings appear together in which of the selected cross-ratios). To find such a general formula for cross-ratio degrees is also mentioned as an open problem in \cite{Go21} and \cite{Go20}. Then, in \cite{R24} by Silversmith, such a general formula was found for a specific choice of cross-ratios given by triangulations, see \ref{defthreeint}, as follows. The edges of an $n$-gon are marked with $1,...,n$ and we consider a triangulation $T$ of it. A triangulation is formed by $n-3$ non-intersecting diagonals of the $n$-gon. Each diagonal defines a four-tuple by taking the markings of the two pairs of edges adjacent to the vertices where the diagonal is ending. We let $\U(T)$ be the set of these four-tuples. In this setting, the degree $D_{\U(T)}$ of these $n-3$ four-tuples equals $2^d,$ where $d$ is the number of inner triangles of this triangulation, see Theorem 3.1 of \cite{R24} and Theorem \ref{allcases}. 

This paper presents an alternative approach to prove this result using tropical, instead of algebraic, methods. Additionally, we describe how all tropical curves that fulfill these cross-ratio conditions look like and what their multiplicities are, which is also mentioned as interesting question in \cite{R24}. 
\begin{theorem}[see Theorem \ref{allcases}]\label{mainint}
	Let $T$ be a triangulation of an $n$-gon with $d$ inner triangles and $n-3$ diagonals that define cross-ratio conditions $C_1,...,C_{n-3},$
	the degree $d_\U$ of the map $\pi_\U$ of Definition \ref{mappi} is $2^d,$ and we can also construct explicitly all tropical curves in $\pi_{\U(T)}^{-1}(C_1,...,C_{n-3})$ and find their multiplicities.
\end{theorem}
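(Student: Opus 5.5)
We argue by induction on the number $d$ of inner triangles. Throughout, $d_{\U}$ is computed as the sum over the preimages of a \emph{generic} point $(C_1,\dots,C_{n-3})$ of the multiplicities of Definition \ref{mappi}. Each multiplicity is the absolute value of the determinant of the linear map $\pi_\U$ restricted to the cone of $\M_{0,n}^{\mathrm{trop}}$ containing the curve. Every cross-ratio condition $C_i$ fixes the length of the path along which the pair of markings of one vertex of diagonal $i$ meets the pair of the other vertex. Because $d_\U$ does not depend on the generic image point, we may choose the lengths $\lambda_1,\dots,\lambda_{n-3}$ as convenient, for instance strongly separated in size ($\lambda_1\gg\lambda_2\gg\cdots$) along an order adapted to the triangulation. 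This choice of lengths is what lets us construct the preimages explicitly.

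\textbf{Base case.} First suppose $d=0$, so the triangulation is a \enquote{fan-like} chain of ears in which every triangle has a polygon edge. Each diagonal then separates off two adjacent edges $\{a,b\}$ on one side. We would show directly that there is a unique tropical curve. It is a caterpillar-type tree in which each diagonal's cross-ratio forces a bounded edge of length $\lambda_i$ separating two markings from the rest. We build it by attaching the ends one at a time along the dual tree of the triangulation, and we check that the determinant is $\pm1$. The matrix is triangular with unit diagonal, once bounded edges are ordered along the dual path.

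\textbf{Inductive step.} Next take an inner triangle $\Delta$. Its three sides are diagonals $\delta_1,\delta_2,\delta_3$, and they split the polygon into three sub-polygons $P_1,P_2,P_3$, each glued to $\Delta$ along one diagonal. Each $P_j$, together with the side $\delta_j$ regarded as an edge, is a triangulated polygon with fewer inner triangles. The key local analysis concerns the three cross-ratios attached to $\delta_1,\delta_2,\delta_3$. Each of them involves the four markings adjacent to the endpoints of $\delta_j$, and these four markings come from two different sub-polygons. We would cut the curve at the subtrees spanned by the markings of each $P_j$. This reduces the problem to curves for the $P_j$, where the side $\delta_j$ is replaced by a single new marked end. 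Those reduced curves are given by the induction hypothesis, and we must then glue them.

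The gluing is a \enquote{tripod} of three subtrees joined at a central region. The three conditions on $\delta_1,\delta_2,\delta_3$ impose relations among the positions at which the relevant markings branch off. Solving them is a piecewise-linear problem in two essential parameters. We expect exactly two combinatorial solutions, corresponding to which pair among the three branches meets first relative to the third. Each solution comes with multiplicity $1$; alternatively, the two solutions could have multiplicities summing to $2$ while some cases give a single solution of multiplicity $2$, depending on the chamber of the $\lambda$'s. In every case the local contribution is $2$. Since the determinant factors as a block product over the pieces, multiplicities multiply, which gives $d_\U = 2\cdot\prod_j d_{\U(P_j)}$. The inner triangles of the $P_j$ together number $d-1$, so this equals $2^d$.

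\textbf{Main obstacle.} The hardest step is the case analysis of the local system at an inner triangle. One must show that the solutions, and their multiplicities summing to $2$, are independent of how the three sub-polygons themselves look. This includes the degenerate situation where some $P_j$ is a single edge, so that the marking is directly an end. One must also verify that the gluing produces no extra spurious solutions. To control this we would fix the ordering of the $\lambda_i$ so that the three sides of $\Delta$ carry the smallest lengths, with the ordering chosen compatibly with the inductive decomposition. The curves for the $P_j$ are then rigid on scales much larger than those three lengths, and the central piece can be computed in isolation as an explicit rational curve with at most six relevant ends. Its preimages and determinants can be listed by hand.
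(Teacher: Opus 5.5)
Your inductive step has a genuine gap: cutting at an arbitrary inner triangle $\Delta$ does not decouple the problem. Let $\delta_1$ have endpoints $v,w$. Every diagonal of $P_1$ ending at $v$ (resp.\ $w$) has a cross-ratio containing the polygon edge at $v$ that lies in $P_3$ (resp.\ the edge at $w$ that lies in $P_2$). So the conditions coming from $P_1$ involve two markings outside $P_1$, and your reduced polygon collapses both into the single end $\delta_1$. Likewise, the four markings of $\delta_j$ come from all three sub-polygons, not from two. As a consequence, the markings of $P_j$ are in general not cut off from the rest of the curve by a single edge. In Example \ref{ex6b} the sub-polygons of the inner triangle have edge sets $\{1,2\},\{3,4\},\{5,6\}$. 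Yet the unique solution under the triangle inequalities has the pairs of ends $\{6,1\},\{2,3\},\{4,5\}$ meeting at vertices, and the inverted curve of Example \ref{ex6a} shows the same in the dual interpretation. Your count is also wrong. Once $\delta_j$ is a boundary edge of $P_j$, a triangle of $P_j$ adjacent to $\delta_j$ that was inner in $T$ becomes outer. Hence the $P_j$ have fewer than $d-1$ inner triangles whenever $\Delta$ has an inner neighbour. For the octagon of Example \ref{ex8}, your formula $2\prod_j d_{\U(P_j)}$ gives $2$ instead of $4$.

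Beyond that, the heart of the theorem is the local analysis at an inner triangle. It has a dichotomy: two curves of multiplicity $1$, or one curve of multiplicity $2$ exactly when all three cross-ratios are neighboring and satisfy the triangle inequalities. You only announce this, with hedges. The claimed rigidity of the $P_j$-curves at large scales is unsupported, precisely because of the shared corner markings. Also, by specialising the lengths you could at best obtain the degree. You would not get the explicit preimages for arbitrary general-position data and arbitrary interpretations, which the statement claims.

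The paper avoids the coupling in two steps. It first splits off outer triangles (Lemma \ref{bijection}, Corollary \ref{trop26}). It then builds $T$ by Construction \ref{addingtri}, always adding an inner triangle flanked by two new border triangles. The new markings $a,b$ then occur only in $C_1,C_2$. So the step reduces to analysing where $a,b$ can be attached to a curve of the smaller problem: either one way with doubled multiplicity, or two ways with unchanged multiplicity. This is done via the column operations of Lemmas \ref{case21}--\ref{case23} and Appendix \ref{cases}.

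If you want to keep a three-piece decomposition, glue onto each $\delta_j$ a border triangle whose two new edges carry the neighbouring corner markings. Let $N_j$ be the edge set of the enlarged $P_j$. Then $\U(T)$ is the disjoint union of the three resulting sets, and $\pi_{\U(T)}$ factors through the equidimensional map $\M_{0,n}^{\mathrm{trop}}\to\prod_j\M_{0,N_j}^{\mathrm{trop}}$. Its degree is $2$, which you can read off from a triangulation in which $\Delta$ is the only inner triangle. Multiplicativity of degrees then gives $2^d$ --- but this yields only the degree, not the curves.
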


In the same paper \cite{R24}, Silversmith also introduced the open problem of finding the maximal degree that a set of cross-ratios using $n$ markings can have. There, this number is given for $n\leq6.$ Recently, this maximal degree was also found for $n=7$ and $n=8$ in \cite{Ma26}. Here, we list all possible degrees for any $\U,$ and thus, also the maximal degree, for $n=9$ and $n=10$ using a computer program based on Goldners algorithm, written in OSCAR \cite{OSCAR}.
\begin{theorem}[see Theorem \ref{max9}]
	For a set of 6 cross-ratios using the markings in $\{1,...,9\},$ their cross-ratio degree can be every number in $\{0,1,2,3,4,5,6\}$ and for a set of 7 cross-ratios using the markings in $\{1,...,10\},$ their cross-ratio degree can be every number in $\{0,1,2,3,4,5,6,7,8,9,10\}.$
\end{theorem}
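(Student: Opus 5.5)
The statement is a finite classification, so I will prove it by exhaustive computer enumeration. The tool is the implementation of Goldner's recursive algorithm from \cite{Go21}, written in OSCAR \cite{OSCAR}. I will prove two inclusions for each of $n=9$ and $n=10$. First, every value in the stated range is attained by some set $\U$ of $n-3$ cross-ratios. Second, no $\U$ has degree outside that range. Since the correspondence results give $d_\U=D_\U$, it suffices to compute the tropical degrees $d_\U$.

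For the lower inclusion, attaining every value, I only need explicit witnesses. Degree $0$ comes from any $\U$ in which two cross-ratios coincide, or in which some four-tuple set fails the Hall-type condition required for $\pi_\U$ to be dominant. Degrees that are powers of $2$ come from \Cref{mainint}. For $n=9$, the six diagonals of a $9$-gon triangulation have $d\in\{0,1,2\}$ inner triangles, giving $1,2,4$. For $n=10$, the same construction gives $1,2,4,8$, with $d=3$ inner triangles realizable in a $10$-gon. The remaining values ($3,5,6$ for $n=9$; $3,5,6,7,9,10$ for $n=10$) will be found by a randomized or guided search over $\U$, with the degree evaluated by Goldner's recursion. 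For each witness I will record $\U$ explicitly in a table. A reader can verify a witness by hand: fix generic tropical cross-ratio lengths, list the combinatorial types of trivalent $n$-marked curves satisfying the conditions, and sum their multiplicities.

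For the upper inclusion, the maximum, I will enumerate all sets $\U$ of $n-3$ four-element subsets of $\{1,\dots,n\}$ up to the action of $S_n$, and run the recursion on each. Raw enumeration is infeasible for $n=10$, since there are $\binom{210}{7}$ choices. I will therefore reduce the search in three ways.
\begin{itemize}
\item Discard $\U$ violating the necessary condition for nonzero degree: every subset of $k$ cross-ratios must involve at least $k+3$ markings.
\item Quotient by $S_n$ using canonical forms of the associated hypergraph.
\item Use Goldner's recursion itself, which expresses $d_\U$ as a sum over degenerations of degrees of smaller cross-ratio sets on fewer markings. This yields bounds from the already-known maximal degrees for $n\le 8$ from \cite{Ma26}, which let me prune branches whose bound falls below the current maximum.
\end{itemize}

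The main obstacle is this exhaustive upper-bound enumeration for $n=10$. It is a correctness and feasibility problem rather than a conceptual one. I first need to verify that the canonical-form reduction is sound, so that no orbit is missed. I also need to check that the recursion terminates correctly on degenerate inputs that produce degree $0$. I would try orbit enumeration via incremental canonical augmentation: add one four-tuple at a time and keep only canonical representatives that still satisfy the Hall-type condition. This keeps the search to a manageable number of orbits.
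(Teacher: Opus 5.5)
Your proposal is correct and follows the paper's proof, which is also an exhaustive computer enumeration with each degree computed by an OSCAR implementation of Goldner's recursion; the paper uses only a cruder symmetry reduction (fix $S_1=\{1,2,3,4\}$, pick one of four representatives for $S_2$ according to $\abs{S_1\cap S_2}\in\{0,1,2,3\}$, run through all remaining $(n-5)$-subsets and record every degree that occurs, which gives both inclusions at once), so your full $S_n$-orbit enumeration, Hall-condition pruning and separate witness list are efficiency refinements of the same route rather than a different one. One small adjustment: for $n=10$ a degeneration in the recursion can leave nine markings on one side, so your pruning bound needs $C(9)=6$ from your own $n=9$ run, not only the values for $n\le 8$ from \cite{Ma26}.
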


For $n=12,$ we make use of Corollary 4.4 in \cite{BELL25}, where an upper bound for this maximal degree in some cases was shown. The remaining cases can now also be calculated with a computer program such that we can also provide the maximal degree for $n=12.$
\begin{theorem}[see Theorem \ref{n12}]
	For a set of 9 cross-ratios using the markings in $\{1,...,12\}$, their cross-ratio degree can be at most 32.
\end{theorem}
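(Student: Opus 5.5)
The bound for $n=12$ rests on two ingredients: the upper bound of \cite[Corollary 4.4]{BELL25}, which covers most configurations, and a computer search based on Goldner's recursion \cite{Go21} for the configurations it does not cover. Lower bound: it suffices to exhibit one configuration $\U$ with $d_\U=32$. A triangulation of the $12$-gon with $d=5$ inner triangles has degree $2^5=32$ by Theorem \ref{mainint}. Such a triangulation exists: in a triangulation of an $n$-gon one has $e+d=n-2$ and $e\geq 2$ (where $e$ counts the ears), so $d\leq n-4=8$, and $d=5$ can be realized by a suitable fan-like arrangement of diagonals. So the content of the theorem is the upper bound $d_\U\leq 32$ for every set $\U$ of $9$ four-tuples in $\{1,\dots,12\}$.

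For the upper bound I would first classify $\U$ by its combinatorial type, i.e. the hypergraph on $\{1,\dots,12\}$ with $9$ hyperedges of size $4$, up to the action of $S_{12}$. Several reductions keep the list small.
\begin{itemize}
\item If some marking occurs in no cross-ratio, forgetting it gives a degree-zero situation or reduces to a smaller $n$. Dimension counting (a Hall-type condition that any $k$ cross-ratios involve at least $k+3$ markings) is necessary for $d_\U\neq 0$, so configurations violating it can be discarded.
\item If $\U$ splits along a marking set so that the forgetful maps factor, the degree is multiplicative. Since the maxima for $n\le 10$ are known (Theorem \ref{max9} and \cite{Ma26}), such products are bounded by products of smaller maxima, which stay below $32$. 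For example $n=9$ gives $6$ and $n=10$ gives $10$, and the combinations can be checked against $32$.
\end{itemize}

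Next, \cite[Corollary 4.4]{BELL25} gives an upper bound on the degree under a hypothesis on $\U$. I expect it concerns markings appearing in few cross-ratios, or the valence structure of the hypergraph. I would identify exactly which configurations satisfy that hypothesis and check that the resulting bound is at most $32$ there. This leaves a residual family, presumably configurations where every marking appears in many cross-ratios, so the incidence structure is dense: $36$ incidences over $12$ markings means an average of $3$.

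For the residual family I would run the OSCAR implementation of Goldner's recursive algorithm on one representative per isomorphism class and record the maximum. The main obstacle is making this enumeration feasible and complete. The number of $9$-subsets of the $\binom{12}{4}=495$ four-tuples is enormous, so I would do isomorphism-free generation: build the configurations canonically, add one four-tuple at a time, and prune early using the Hall condition and the hypothesis of \cite{BELL25}. The computation then only needs to verify that no residual class exceeds $32$. A second point requiring care is that Goldner's recursion depends on choosing generic positions of the cross-ratio lengths. I would rely on the fact that $d_\U$ is independent of this choice, so a single generic choice per configuration suffices.
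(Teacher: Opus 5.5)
\textbf{Genuine gap: the reduction that carries the proof is missing.} Your overall architecture (a cited upper bound from \cite{BELL25} plus Goldner's algorithm on whatever it leaves) is the paper's. However, the step that actually carries the proof is missing, and your guess at it is inverted. The bound used (Lemma \ref{Kapranovlemma}) gives $d_{\U}\le 32$ whenever some marking lies in \emph{at least four} of the $S_i$. It is not about markings appearing in few cross-ratios, and the residual is not a ``dense'' family.

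What remains are the configurations in which every marking lies in at most three sets. Since $9\cdot 4=36=3\cdot 12$, pigeonhole forces every marking into exactly three sets. This $3$-regularity is what turns the computer part into a small, structured enumeration. Your plan of isomorphism-free generation over $9$-subsets of the $495$ four-sets, pruned by a criterion you have not identified, never specifies the family to be checked. So as written it is not yet a proof.

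The paper then splits the $3$-regular case in two:
\begin{itemize}
\item If all $S_i$ pairwise intersect, Lemma \ref{intersect} shows the configuration is unique up to relabeling (any two sets meet in exactly one marking), and Goldner's algorithm gives degree $19$.
\item Otherwise one may take $S_1=\{1,2,3,4\}$ and $S_2=\{5,6,7,8\}$. One enumerates, up to symmetry, how $1,\dots,4$ and $5,\dots,8$ (each needing two more sets) and $9,\dots,12$ (each needing three sets) are distributed over $S_3,\dots,S_9$. Running the algorithm on every resulting $\U_{12}$, the maximum found is $32$.
\end{itemize}

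\textbf{Your lower-bound paragraph is false.} It is not needed for the statement, but the claim is wrong. With $e$ border triangles, $o$ outer and $d$ inner triangles one has $2e+o=n$ and $e+o+d=n-2$, hence $d=e-2\le n/2-2=4$ for $n=12$. So triangulations of the $12$-gon give at most $2^4=16$. A fan triangulation has no inner triangles at all. The value $32$ comes from the Laman-graph counts of \cite{GGS20} and also appears in the search, not from triangulations.

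\textbf{Your multiplicativity pruning has a hole.} You claim products of smaller maxima stay below $32$, but a split with $|X|=1$, $|Y|=8$ involves $C(11)$, which is unknown. This is harmless only because in the $3$-regular residual such a split cannot occur. The marking in $X$ would have to lie in three distinct subsets of a $4$-element set, which is impossible. Seeing that again requires the reduction you are missing.
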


Cross-ratios also appear in physics, in relation to the positive geometry of $\M_{0,n}$ and scattering amplitudes, see \cite{ABL17}, \cite{Brown09}, \cite{L22}, \cite{AHHL23}, \cite{CSY14}, \cite{Br21}, \cite{BBBDF15} and \cite{DG14}. Also, cross-ratio degrees are also a special case of Kapranov degrees which are introduced in \cite{BELL25}. Other similar counting problems are found in \cite{CI24}, \cite{Eb25} and \cite{Go22}.

Connected to this counting problem is also the problem of finding the number of embeddings of a Laman graph in the sphere. A triangulation of an $2n$-gon can viewed as a graph where all the diagonals and edges form the edges of the graph and their adjacent corners the vertices. A Laman graph is a minimally rigid graph, for a precise definition see \cite{GGS20}. It is easy to check that graphs given by triangulations are also Laman graphs. 

In \cite{GGS20}, it is described how a set of $2n-3$ cross-ratios with markings in $\U=\{S_1,...,S_{2n-3}\}$ can be assigned to such a Laman graph and that the degree of the corresponding map $\pi_{\U}$, see Definition \ref{mappi} is equal to the number of ways of embedding a Laman graph in the sphere. The connection of Laman graphs and constructing tropical curves that fulfill the cross-ratio conditions defined by a Laman graph is work of an ongoing project.

\subsection{Structure of the Paper}
In the Preliminaries \ref{prelim}, we introduce the counting problem with some basic notations. Section \ref{general} provides general lemmas that are later used to prove the main theorem. Then, the main theorem is proved in Section \ref{2ndint} after considering a special case that hopefully helps in understanding the proof methods. In Section \ref{max}, we provide a new result concerning the problem of finding maximal cross-ratio degrees. A table of case distinctions \ref{cases} and another section \ref{1stint} about a different approach to the main theorem are found in the appendix.

\subsection{Acknowledgments}
This work is supported by the DFG project ID 286237555, TRR 195. I want to thank Daniele Agostini, Shelby Cox, Matt Larson, Hannah Markwig and Tobias Schnieders for proofreading and inspiring discussions.

\section{Preliminaries}\label{prelim}
In section \ref{21}, we introduce tropical curves and in section \ref{23} their moduli spaces and tropical cross-ratios. These moduli spaces naturally have the structure of tropical fans, which we introduce before, in section \ref{22}. Then, in section \ref{24}, the central problem of this paper, called the counting problem is defined.  Finally, section \ref{25} explains how we can obtain cross-ratios from a triangulation of an $n$-gon, which is the main case that we consider in this paper.
\subsection{Abstract Rational Tropical Curves}\label{21}
\begin{definition}[Abstract rational tropical curve]
	An \emph{abstract rational tropical curve} is a connected graph $\Gamma$ that is a tree without two-valent vertices, where the set of the vertices and the set of the edges of $\Gamma$ are denoted by $V(\Gamma)$ and $E(\Gamma)$, respectively, and that satisfies the following properties.
	\begin{itemize}
		\item $E(\Gamma)$ is equipped with a length function $l:E(\Gamma)\to \R_{> 0} \cup \{\infty\}$, such that all edges with $l(e)=\infty$ are exactly the ones adjacent to a one-valent vertex. These edges are called \emph{ends}. The other edges $e$, for which it then holds that $l(e)<\infty$ are called \emph{bounded edges}.
		\item Vertices that are adjacent to a bounded edge are called \emph{inner vertices}.
	\end{itemize}
	
	An $n$-\textit{marked abstract rational tropical curve} $(\Gamma,x_{[n]})$ is an abstract rational tropical curve that has exactly $n$ ends that are marked with pairwise different $x_1,\dots,x_n\in\mathbb{N}$. Two $n$-marked tropical curves $(\Gamma,x_{[n]})$ and $(\tilde{\Gamma},\tilde{x}_{[n]})$ are isomorphic if there is a homeomorphism $\Gamma\to \tilde{\Gamma}$ mapping $x_i$ to $\tilde{x}_i$ for all $i$ and each edge of $\Gamma$ is mapped onto an edge of $\tilde{\Gamma}$ by an affine linear map of slope $\pm 1$. Often, we also denote it as $\Gamma$ when the markings are clear from the context.
	
	Forgetting all lengths of an $n$-marked abstract rational tropical curve gives us its \textit{combinatorial type}.
\end{definition}
\begin{remark}
	Because we are not working with other tropical curves in this paper, we will refer to $n$-marked abstract rational tropical curves as tropical curves if the number $n$ is already known or irrelevant for this case.
\end{remark}
\begin{example}\label{ex4}
	For a $4$-marked tropical curve, there are only four combinatorial types, see Figure \ref{m04}.
	\begin{figure}
		\centering
		\includegraphics[scale=0.5]{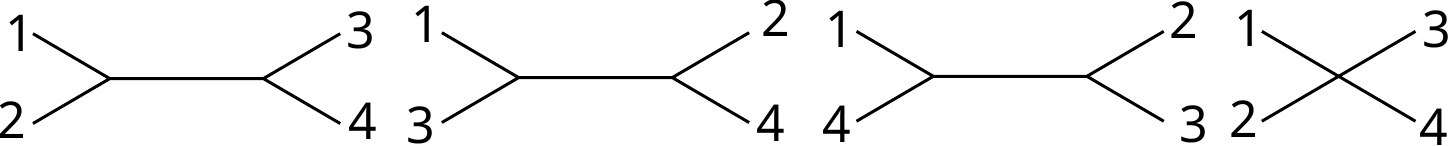}
		\caption{All combinatorial types of tropical curves with the markings 1, 2, 3 and 4.}
		\label{m04}
	\end{figure}
\end{example}
\subsection{Tropical Fans and Morphisms}\label{22}
We now introduce further concepts of tropical geometry that we will need in this paper.  More on these topics can be found in section 2 of \cite{GKM09}. 

\begin{definition}[Fan]
	A \emph{fan} $X$ in $\R^m$ is a nonempty finite set of cones in $\R^m$ such that different cones intersect each other along faces and all faces of the cones are also part of the fan.
	
	The \emph{dimension} of a fan is the largest dimension of a cone in it. A fan is called \emph{pure dimensional} of dimension $n$ if all \emph{maximal cones}, that are cones that are not faces of any other cones, are of dimension $n$.
	
	The set of all $k$-dimensional cones of $X$ is denoted as $X^{(k)}.$
	
	The \emph{support} $|X|$ is the union of all cones.
\end{definition}

\begin{remark}[Lattice]\label{lattice}
	In tropical geometry, it is necessary to keep track of integral structure. For that reason, we need to fix a lattice inside the vector spaces we work with. We will usually work in coordinates and fix $\mathbb{Z}^m$ inside $\R^m.$ For a cone $\sigma,$ its lattice is the intersection $\mathrm{Span}(\sigma)\cap \mathbb{Z}^m$ with the lattice $\mathbb{Z}^m.$
\end{remark}
\begin{definition}[Normal vector]
	For a cone $\sigma$ of dimension $n$ and a face $\tau$ of it of dimension $n-1,$ we define the \emph{normal vector $u_{\sigma/\tau}$ of $\sigma$ relative to $\tau$}  to be a vector in $\R^m$ that descends to the unique generator of the lattice $\mathrm{Span}(\sigma)/\mathrm{Span}(\tau).$ Note, that there are infinitely many choices of $u_{\sigma/\tau}$ in $\R^m$ but they all descend to the same vector in $\mathrm{Span}(\sigma)/\mathrm{Span}(\tau).$
\end{definition}
\begin{definition}[Tropical fan]\label{deftropfan}
	A \emph{tropical fan} is a pair $(X,\omega_X),$ where $X$ in $\R^m$ is a fan of pure dimension $n$ and $\omega_X:X^{(n)}\to \mathbb{Z}_{\ge0}$ is a weight function that assigns a weight $\omega_X(\sigma)$ to each maximal cone $\sigma$ such that for each cone $\tau$ of dimension $n-1,$ the following \emph{balancing condition} holds:
	$$\sum \omega_X(\sigma) u_{\sigma/\tau}=0,\text{ in $\R^m /\mathrm{Span}(\tau)$},$$
	where the sum goes over all maximal cones $\sigma$ of which $\tau$ is a face.
	
	Often, a tropical fan is just denoted as $X$, when the weight function is clear from the context. A tropical fan $X$ is called \emph{irreducible} if there is no other tropical fan $Y$ of the same dimension with $|Y|\subsetneq |X|.$
\end{definition}
\begin{example}\label{extropfan}
	The tropical line consisting of the three rays from the origin generated by the vectors $\binom{1}{1},$ $\binom{-1}{0}$ and $\binom{0}{-1}$ with weight 1 each form a tropical fan, see Figure \ref{embed}. This fan is also irreducible.
	
	Our main examples of tropical fans are the moduli spaces of $n$-marked tropical curves, see later in Definition \ref{defmod} and Remark \ref{M0nisfan}.
\end{example}
\begin{definition}[Product of tropical fans]\label{defprod}
	For two tropical fans $X$ in $\R^m$ and $Y$ in $\R^{m'}$ we can define their \emph{product}. The cones are all products $\sigma \times \sigma'$ in $\R^{m+m'}$ of cones $\sigma \in X$ and $\sigma'\in Y$ and for the maximal cones $\sigma \times \sigma',$ their weight is the product of the weights of $\sigma$ and $\sigma'$.
\end{definition}
\begin{remark}
	It can easily be checked that the product of tropical fans is a tropical fan. 
\end{remark}
\begin{definition}[Morphism of fans]\label{defmorph}
	For two fans $X$ in $\R^m$ and $Y$ in $\R^{m'}$, a \emph{morphism} from $X$ to $Y$ is a map $f:X\to Y$ that is induced by a $\mathbb{Z}$-linear map of the surrounding spaces. A morphism of tropical fans is defined in the same way, there are no additional conditions on the weights.
\end{definition}

Using the following Proposition, we obtain that for some morphisms of tropical fans, the degree, which is the number of preimages of a general point, counted with multiplicity is finite, well defined and thus, does not depend on the choice of the point. The multiplicity arises from the integral structure, see Remark \ref{lattice}, and the weights of the cones.

This will later be used for counting tropical curves that fulfill some given cross-ratio conditions.
\begin{proposition}See \cite[Corollary 2.26]{GKM09}.\label{2.26}
	Let $X$ and $Y$ be tropical fans of the same dimension $n\ge1$ and let $ f: X
	\to Y $ be a morphism. Assume that $Y$ is irreducible. Then there is a fan
	$ Y_0 $ of smaller dimension with $ |Y_0| \subset |Y| $ such that
	\begin {enumerate}
	\item 
	each point $ Q \in |Y| \backslash |Y_0| $ lies in the interior of a
	cone $ \sigma_Q' \in Y $ of dimension $n$;
	\item
	each point $ P \in f^{-1} (|Y| \backslash |Y_0|) $ lies in the interior
	of a cone $ \sigma_P \in X $ of dimension $n$;
	\item 
	for $ Q \in |Y| \backslash |Y_0| $ the sum
	\[ \sum_{P \in |X|: f(P)=Q} \mult_P f \]
	is called the \emph{degree} of $f,$ and does not depend on $Q$, where the multiplicity $ \mult_P f $ of $f$ at $P$
	is defined to be
	\[ \mult_P f := \frac {\omega_X(\sigma_P)}{\omega_Y(\sigma'_Q)}
	\cdot |\Lambda'_{\sigma'_Q}/f(\Lambda_{\sigma_P})|, \]
	\end {enumerate}
	where $|\Lambda'_{\sigma'_Q}/f(\Lambda_{\sigma_P})|$ denotes the index of the image of the lattice of the cone $\sigma_P$ inside the lattice of $\sigma'_Q,$ see also Construction 2.24 of \cite{GKM09}. 
\end{proposition}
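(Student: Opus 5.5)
The proposition is \cite[Corollary 2.26]{GKM09}, so one could simply cite it. If I were to reprove it, I would \emph{push forward} $X$ along $f$ and compare the result with $Y$ using irreducibility.

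\textbf{Step 1: refine and choose $Y_0$.} First I refine $X$ and $Y$ so that the image of every cone of $X$ is contained in a cone of $Y$. Refining does not change the supports, and weights are inherited, so the tropical-fan structure is preserved. Then I let $Y_0$ be the union of three kinds of cones:
\begin{itemize}
\item all cones of $Y$ of dimension $\le n-1$;
\item the images $f(\tau)$ of all cones $\tau$ of $X$ of dimension $\le n-1$;
\item the images of all maximal cones $\sigma$ of $X$ on which $f$ is not injective, so that $\dim f(\sigma)<n$.
\end{itemize}
After a further refinement this is a fan of dimension $<n$ with $|Y_0|\subset|Y|$. Properties (1) and (2) then hold by construction. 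Moreover, for $Q\notin |Y_0|$ the fibre $f^{-1}(Q)$ is finite, because each maximal cone contributes at most one point.

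\textbf{Step 2: the degree is constant on each maximal cone.} For $Q$ in the interior of a maximal cone $\sigma'$ of $Y$ with $Q\notin|Y_0|$, set
\[ \deg_{\sigma'}(Q)=\sum_{\sigma\in X^{(n)},\ f(\sigma)\subset\sigma'} \omega_X(\sigma)\,|\Lambda'_{\sigma'}/f(\Lambda_\sigma)|\;\mathbf 1[Q\in f(\sigma)]. \]
This is locally constant on $\sigma'\setminus|Y_0|$. Since $Y_0$ has codimension $\ge 1$ and we have refined, it is in fact constant on the interior of $\sigma'$; call the value $\delta(\sigma')$.

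\textbf{Step 3: the pushforward is balanced.} Define $f_*X$ as the fan $Y$ with weight $\delta(\sigma')$ on each maximal cone $\sigma'$. The key claim is that $f_*X$ satisfies the balancing condition of Definition~\ref{deftropfan} at every codimension-one cone $\tau'$ of $Y$.

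To prove it, take a generic point $R$ of $\tau'$. The cones of $X$ relevant at $R$ are the $(n-1)$-cones $\tau$ of $X$ that map injectively onto a neighbourhood of $R$ in $\tau'$, together with the maximal cones adjacent to them. For each such $\tau$, balancing of $X$ gives $\sum_{\sigma>\tau}\omega_X(\sigma)u_{\sigma/\tau}=0$ modulo $\mathrm{Span}(\tau)$. I apply $f$ to this relation. Each vector $f(u_{\sigma/\tau})$ equals the index $|\Lambda'_{\sigma'}/f(\Lambda_\sigma)|$ times $u_{\sigma'/\tau'}$, up to a multiple of the common factor $|\Lambda'_{\tau'}/f(\Lambda_\tau)|$, where $\sigma'\supset f(\sigma)$. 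Summing over all such $\tau$ then gives $\sum_{\sigma'>\tau'}\delta(\sigma')u_{\sigma'/\tau'}=0$.

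\textbf{Step 4: irreducibility forces proportionality.} $f_*X$ is a tropical fan with $|f_*X|\subset|Y|$, after discarding cones of weight zero. I claim it is a rational multiple of $Y$. To see this, let $\mu=\min_{\sigma'}\omega_Y(\sigma')/\delta(\sigma')$, the minimum taken over cones with $\delta(\sigma')>0$. Then $\delta\cdot\mu^{-1}$ rescaled to integers, or better $N(Y-\mu f_*X)$ for a suitable integer $N$, is balanced, because balancing is linear in the weights. It also has nonnegative weights. Its support is strictly contained in $|Y|$ unless it vanishes, so irreducibility of $Y$ forces it to vanish. Hence $\delta(\sigma')=c\,\omega_Y(\sigma')$ for a constant $c$. (If $f_*X=0$, the degree is $0$ everywhere.)

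Dividing by $\omega_Y(\sigma'_Q)$ shows that $\sum_{P\mapsto Q}\mult_P f=c$ is independent of $Q$, which is (3).

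The main obstacle is the lattice-index bookkeeping in Step 3. One has to check that the factor $|\Lambda'_{\tau'}/f(\Lambda_\tau)|$ is common to all $\sigma>\tau$, and that normal vectors of $X$ push forward to the correct multiples of the normal vectors of $Y$. I would handle this with the explicit choice of normal vectors from \cite[Construction 2.24]{GKM09}.
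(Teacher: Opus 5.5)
The paper gives no proof of its own here and simply quotes \cite[Corollary 2.26]{GKM09}; your route (push $X$ forward as in Construction 2.24 of \cite{GKM09}, show $f_*X$ is balanced, then use irreducibility of $Y$ to get $f_*X=c\cdot Y$) is the push-forward argument behind that result, and the lattice bookkeeping you flag in Step 3 should read $f(u_{\sigma/\tau})\equiv \frac{[\Lambda'_{\sigma'}:f(\Lambda_\sigma)]}{[\Lambda'_{\tau'}:f(\Lambda_\tau)]}\,u_{\sigma'/\tau'}$ modulo $\mathrm{Span}(\tau')$, i.e.\ division by the common factor, which follows from multiplicativity of indices because injectivity of $f$ on $\mathrm{Span}(\sigma)$ gives $f(\Lambda_\sigma)\cap\mathrm{Span}(\tau')=f(\Lambda_\tau)$. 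The one inference to repair is in Step 2: codimension $\ge 1$ does not make $\mathrm{relint}(\sigma')\setminus|Y_0|$ connected, since with only your Step 1 refinement an image $f(\tau)$ of an $(n-1)$-cone can cut through $\mathrm{relint}(\sigma')$, so instead refine $Y$ (and pull the refinement back to $X$) until every $f(\tau)$ is a union of cones of $Y$, which makes Step 2 trivial and turns those walls into ordinary codimension-one cones of the refined, still irreducible, $Y$ that Step 3 already handles.
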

\begin{remark}
	In the cases relevant to this paper, the multiplicity of Proposition \ref{2.26} can also be calculated differently, as we will see in Remark \ref{mult}.
\end{remark}
\subsection{Tropical Moduli Spaces}\label{23}

	All tropical curves of a specific combinatorial type with $k$ bounded edges can be parameterized by a cone $\R_{>0}^k$, where each coordinate represents one of the bounded edges. A point $(\ell_1,...,\ell_k)$ in $\R_{>0}^k$ corresponds to the tropical curve with length $\ell_i$ for the corresponding bounded edge $e_i.$
	
	We now consider the case, where one of the bounded edges shrinks, so that in the end, the length is zero. This is now a tropical curve of another combinatorial type. There are also other combinatorial types of tropical curves that produce this one when shrinking an edge. 
	
	For example, in Figure \ref{m04} and Example \ref{ex4}, if we shrink the bounded edge of any of the first three combinatorial types, we obtain the tropical curve of the last combinatorial type. So, the three open cones isomorphic to $\R_{>0}$ can be glued together at the point corresponding to zero to obtain a space parametrizing all $4$-marked tropical curves.
	
	Analogously, we can also glue cones together along their boundary, where at least one coordinate is 0 to obtain a parameter space, called the \emph{moduli space}, of all $n$-marked tropical curves. With this, the moduli space of $n$-marked tropical curves obtains the structure of an abstract cone complex. In Remark \ref{M0nisfan}, we will see that it is also a tropical fan.
	
	An $n$-marked tropical curve has the most bounded edges if all inner vertices are three-valent. Then, there are $n-3$ bounded edges, so the moduli space of $n$-marked tropical curves is of dimension $n-3.$

\begin{definition}[Moduli space] 	\label{defmod}
	The space $\M_{0,S}^{\mathrm{trop}}$ is called the moduli space of abstract tropical curves, where $S$ is the set of the markings of the ends of the tropical curve and it parameterizes all these tropical curves up to isomorphism. For $S=\{1,...,n\},$ we write it as $\M_{0,n}^{\mathrm{trop}}.$
\end{definition}
\begin{remark}[$\M_{0,n}^{\mathrm{trop}}$ is a tropical fan]\label{M0nisfan}
	By renaming the markings, every moduli space is isomorphic to some $\M_{0,n}^{\mathrm{trop}}.$
	 Moduli spaces of tropical curves were studied regarding different aspects and applications like phylogenetic trees or compactifications in
	 \cite{Mi07}, \cite{BHV01}, \cite{GM10}, \cite{SS04}, and \cite{Te07}.
	  In \cite[Theorem 3.7]{GKM09}, it is shown that the moduli space $\M_{0,n}^{\mathrm{trop}}$ can be made into a tropical fan as defined in Definition \ref{deftropfan}, embedded into a $\R^t$ for some $t\in\mathbb{N},$ of pure dimension $n-3$, where each of its cones corresponds to the set of all abstract rational tropical curves of a specific combinatorial type and all its weights are 1.
\end{remark}

\begin{example}\label{exM04}	
	The moduli space $\M_{0,4}^{\mathrm{trop}}$ can be embedded into $\R^2$ as depicted in Figure \ref{embed}, with the direction vectors of the rays being $\binom{1}{1},$ $\binom{-1}{0}$ and $\binom{0}{-1}$ and we can easily see that it is indeed an irreducible tropical fan when taking weight 1 for each of the three maximal cones, see Example \ref{extropfan}.
	\begin{figure}
		\centering
		\includegraphics[scale=0.4]{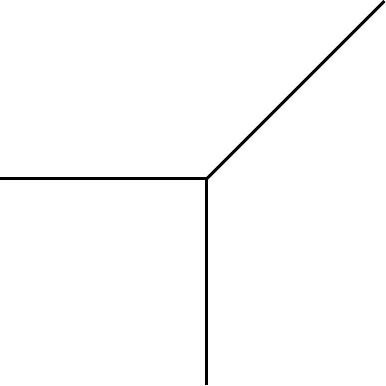}
		\caption{Embedding of the moduli space $\M_{0,4}^{\mathrm{trop}}$ into $\R^2,$ see Example \ref{exM04}}
		\label{embed}
	\end{figure}
\end{example}
\begin{definition}[Tropical cross-ratio]\label{defCR}
 A \emph{tropical cross-ratio} $C=(\lambda,\cros{a}{b}{c}{d}),$ is a tuple of an element $\lambda\in\R_{\ge0}$ and two distinct tuples $(a,b)$ and $(c,d)$ of markings. 
	
	Let $\Gamma$ be an $n$-marked abstract rational tropical curve that has ends marked with $a,$ $b,$ $c,$ and $d.$ We say $\Gamma$ \emph{fulfills} a cross-ratio condition  $C=(\lambda,\cros{a}{b}{c}{d})$ if the unique paths from the end $a$ to the end $c$ and the one from the end $b$ to the end $d$ meet in at least one vertex and the sum of the lengths of the edges they have in common is $\lambda,$ see Figure \ref{CR}.
	
	We say that an edge \emph{contributes} to a cross-ratio if it is one of the edges, that those two paths have in common.
		\begin{figure}
		\centering
		\includegraphics[scale=0.4]{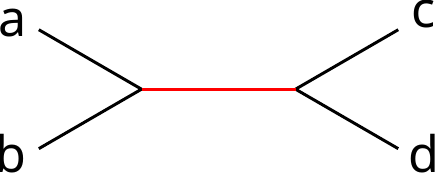}
		\caption{The pictures shows a 4-marked tropical curve $\Gamma.$ We assume that the length of the bounded edge is $\lambda.$ Then $\Gamma$ fulfills the cross-ratio $C=(\lambda, \cros{a}{b}{c}{d}),$ see Definition \ref{defCR}}
		\label{CR}
	\end{figure}
\end{definition}
\begin{example}\label{excr}
	Consider the 7-marked tropical curve $\Gamma$ depicted in the left of Figure \ref{excrim}. This tropical curve fulfills, among others, the cross-ratio conditions $C_1=(\ell_1+\ell_2, \cros{1}{2}{3}{4}),$ $C_2=(\ell_4, \cros{2}{5}{6}{7}),$ and $C_3=(\ell_1+\ell_3, \cros{1}{2}{5}{7}).$ 

\end{example}
\begin{remark}[Tropical cross-ratio]\label{remcr}

	The previous definition implies that for $\lambda>0$, we have a set of consecutive edges $e_1,...,e_k$ that are the common part of the paths from $a$ to $c$ and the one from $b$ to $d.$	The path from $a$ to $b$ meets these edges at a vertex adjacent to $e_1$ and the path from $c$ to $d$ meets them at a vertex adjacent to $e_k.$ Thus, we can see that $\cros{a}{b}{c}{d}=\cros{a}{b}{d}{c}=\cros{b}{a}{c}{d}=\cros{b}{a}{d}{c}=\cros{c}{d}{a}{b}=\cros{c}{d}{b}{a}=\cros{d}{c}{a}{b}=\cros{d}{c}{b}{a}.$ So, for a choice of four markings, there are three different possibilities of tropical cross-ratios. We can also interpret these three possibilities as the three different open cones of $\M_{0,4}^{\mathrm{trop}},$ as each cone corresponds to a combinatorial type and these three possibilities define different combinatorial types regarding the ends $a,$ $b,$ $c$ and $d,$ see the following Lemma \ref{crasft}.
\end{remark}
\begin{definition}[Forgetful map]\label{forget}
   Let $n \in \mathbb{N}$ with $n\ge4$. For every $i\in[n]$ we have a \emph{forgetful map} $$ft_i:\M_{0,n}^{\mathrm{trop}}\to \M_{0, [n]\setminus\{i\}}^{\mathrm{trop}},$$ where an $n$-marked tropical curve is mapped to an $n-1$-marked tropical curve by removing the end marked with $i$ and stabilizing the resulting tropical curve. This means that if a two-valent vertex is created when removing that end, the two adjacent edges get merged, and their lengths are added up if both are bounded edges and if one of them is an end, this new edge is assigned to be an end and the other length is omitted.
   
   \begin{figure}
   	\centering
   	\includegraphics[scale=0.5]{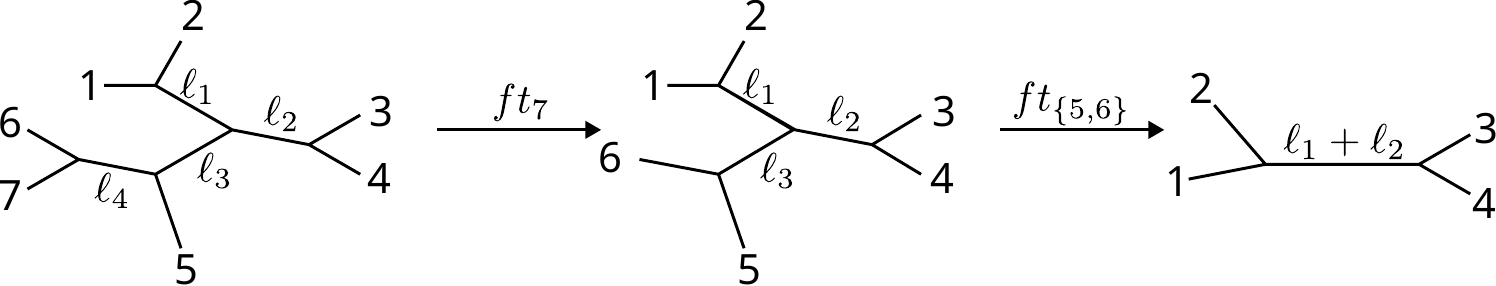}
   	\caption{Here, we see a tropical curve $\Gamma$ and how $\Gamma$ gets transformed under some forgetful morphisms, see Definition  \ref{forget} as well as Example \ref{excr} and \ref{excr2}}
   	\label{excrim}
   \end{figure}
\end{definition}
\begin{proposition}
	The forgetful map of Definition \ref{forget} is a morphism of tropical fans as in Definition \ref{defmorph} with the embedding of $\M_{0,n}^{\mathrm{trop}}$ mentioned in Remark \ref{M0nisfan}.
\end{proposition}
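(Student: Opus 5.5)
To show that $ft_i$ is a morphism of tropical fans, we will use the explicit embedding of $\M_{0,n}^{\mathrm{trop}}$ from \cite[Theorem 3.7]{GKM09}, which Remark \ref{M0nisfan} only refers to. There, a curve $\Gamma$ is sent to the vector of pairwise distances $(\mathrm{dist}_\Gamma(j,k))_{j<k}\in\R^{\binom{n}{2}}$. Here $\mathrm{dist}_\Gamma(j,k)$ is the sum of the lengths of the bounded edges on the path from end $j$ to end $k$. This vector is taken modulo the image of the linear map $\phi_n:\R^n\to\R^{\binom{n}{2}}$, $a\mapsto(a_j+a_k)_{j<k}$. The target is therefore the lattice quotient $Q_n=\R^{\binom{n}{2}}/\mathrm{Im}(\phi_n)$, with the lattice induced by $\mathbb{Z}^{\binom{n}{2}}$. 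By Definition \ref{defmorph}, it suffices to exhibit a $\mathbb{Z}$-linear map $Q_n\to Q_{n-1}$ that restricts to $ft_i$ on the embedded fan.

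\textbf{Step 1: the candidate map.} Let $\mathrm{pr}_i:\R^{\binom{n}{2}}\to\R^{\binom{n-1}{2}}$ be the projection that forgets all coordinates indexed by pairs containing $i$. It is integral. It also sends $\mathrm{Im}(\phi_n)$ into $\mathrm{Im}(\phi_{n-1})$, since $\mathrm{pr}_i(\phi_n(a))=\phi_{n-1}(a')$, where $a'$ is $a$ with the $i$-th entry deleted. Hence it descends to a $\mathbb{Z}$-linear map $\overline{\mathrm{pr}_i}:Q_n\to Q_{n-1}$.

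\textbf{Step 2: compatibility with $ft_i$.} We must show that for every $n$-marked curve $\Gamma$ and all $j,k\neq i$,
\[
\mathrm{dist}_{ft_i(\Gamma)}(j,k)=\mathrm{dist}_\Gamma(j,k).
\]
This is the step needing care, since stabilization changes edges. Removing end $i$ leaves the path between $j$ and $k$ as a set unchanged. If a two-valent vertex appears, there are two cases.
\begin{enumerate}
\item If both adjacent edges are bounded, they merge into one edge whose length is the sum of theirs, so path lengths are preserved.
\item If one adjacent edge is an end $m$, the other bounded edge $e$ is absorbed into that end and its length is dropped. Then $e$ lay on the paths from $m$ to every other end and on no path between two ends different from $m$. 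So $\ell(e)$ occurs exactly in the coordinates $(m,k)$ for all $k\neq m$, i.e.\ its removal changes the distance vector by $\ell(e)\,\phi_{n-1}(e_m)$, where $e_m$ is the standard basis vector, an element of $\mathrm{Im}(\phi_{n-1})$.
\end{enumerate}
Consequently the class in $Q_{n-1}$ is unchanged, and $\overline{\mathrm{pr}_i}$ agrees with $ft_i$ on $|\M_{0,n}^{\mathrm{trop}}|$.

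\textbf{Step 3: cone structure.} Finally, $ft_i$ maps each cone of $\M_{0,n}^{\mathrm{trop}}$ into a cone of $\M_{0,[n]\setminus\{i\}}^{\mathrm{trop}}$. The reason is that all curves of a fixed combinatorial type have images of one fixed combinatorial type, obtained by deleting end $i$ and stabilizing. Thus $ft_i$ is a morphism of fans. Since Definition \ref{defmorph} imposes no condition on weights, it is also a morphism of tropical fans.

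The only genuinely delicate point is the degenerate case in Step 2, where the bounded edge merges into an end. This is exactly why the quotient by $\mathrm{Im}(\phi_n)$ is needed in the embedding.
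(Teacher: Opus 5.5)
Your argument is correct and is essentially the proof of \cite[Proposition 3.12]{GKM09}, which the paper simply cites: the coordinate projection descends to $Q_n\to Q_{n-1}$, agrees with $ft_i$ because the quotient by $\mathrm{Im}(\phi_{n-1})$ absorbs the length of an edge that merges into an end, and sends cones into cones.

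One small correction: as far as I recall, \cite{GKM09} equips $Q_n$ with the lattice $\Lambda_n$ generated by the vectors $v_I$ (the images of the curves with a single bounded edge of length $1$ separating $I$ from its complement), not with the image of $\mathbb{Z}^{\binom{n}{2}}$. This is harmless, because your Step 2 already gives $\overline{\mathrm{pr}_i}(v_I)\in\{v_{I\setminus\{i\}},0\}$; in fact, for $n\ge 4$, $\Lambda_n$ is exactly twice the image of $\mathbb{Z}^{\binom{n}{2}}$, since $v_I\equiv-2\sum_{j<k,\ j,k\in I}e_{jk}$ modulo $\mathrm{Im}(\phi_n)$, where $e_{jk}$ are the standard basis vectors of $\R^{\binom{n}{2}}$.
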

\begin{proof}
	A proof is found in Proposition 3.12 of \cite{GKM09}.
\end{proof}
\begin{remark}\label{remforget}
	By concatenating several forgetful maps, we obtain maps from $\M_{0,n}^{\mathrm{trop}}$ to $\M_{0,k}^{\mathrm{trop}}$ for all $n>k\ge3.$ In particular, we can obtain a map $$ft_{[n]\setminus S}:\M_{0,n}^{\mathrm{trop}}\to \M_{0, S}^{\mathrm{trop}}$$ for all $S\subset[n]$ with $|S|\geq3.$
	As compositions of morphisms of tropical fans are again morphisms of tropical fans, $ft_{[n]\setminus S}$ is one as well.
\end{remark}
\begin{lemma}\label{crasft}
	Given the map 
	$$ft_{[n]\setminus\{a,b,c,d\}}:\M_{0,n}^{\mathrm{trop}}\to\M_{0,\{a,b,c,d\}}^{\mathrm{trop}},$$
	which is a forgetful map that forgets all ends but $\{a,b,c,d\}.$ Then,  a tropical curve fulfills the cross-ratio $C=(\lambda,\cros{a}{b}{c}{d})$ exactly if this tropical curve is in the preimage under the map $ft_{[n]\setminus\{a,b,c,d\}}$ of the 4-marked tropical curve, where the bounded edge has length $\lambda$ (which can also be 0) and the ends $a$ and $b$ share a vertex. 
\end{lemma}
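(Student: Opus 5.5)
The plan is to understand exactly what the forgetful map $ft_{[n]\setminus\{a,b,c,d\}}$ does to the paths between the four ends, and to compare this with the definition of fulfilling $C=(\lambda,\cros{a}{b}{c}{d})$ in Definition \ref{defCR}. First I will observe that the image of $\Gamma$ under $ft_{[n]\setminus\{a,b,c,d\}}$ is obtained as follows. Take the subtree $\Gamma'\subset\Gamma$ spanned by the four ends $a,b,c,d$, i.e. the union of the six paths between them. Then smooth out all two-valent vertices, adding the lengths of merged bounded edges, and forget the lengths of edges merged into ends. This description follows by induction on the number of forgotten ends using Definition \ref{forget}. Forgetting one end $i$ removes that end and stabilizes, and removing ends one at a time never changes the union of paths among the remaining ends; it only merges edges along them. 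Since the stabilization adds lengths, it preserves path lengths between inner vertices.

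Next I analyze the shape of $\Gamma'$. It is a tree with four leaves (the ends $a,b,c,d$), so after smoothing it is either a single four-valent vertex or has two three-valent vertices joined by one bounded edge. In the second case one pair of the ends lies at one vertex and the complementary pair at the other. In $\Gamma'$ before smoothing, this bounded edge corresponds to a chain of consecutive edges $e_1,\dots,e_k$ of $\Gamma$, and its length equals $\sum \ell(e_i)$. The key step is to identify this chain with the common part of the path from $a$ to $c$ and the path from $b$ to $d$ when $a,b$ share a vertex in the image. Indeed, if $a,b$ sit on one side and $c,d$ on the other, both paths $a\to c$ and $b\to d$ must traverse the chain. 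Conversely, an edge of $\Gamma'$ outside the chain lies on the leg to exactly one leaf, so it lies on at most one of these two paths. Hence the shared edges are exactly $e_1,\dots,e_k$, the paths meet in a vertex, and the common length is the image's bounded edge length.

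Then I handle the converse and the degenerate cases. If the image has the bounded edge of length $\lambda>0$ separating $\{a,b\}$ from $\{c,d\}$, the previous step shows $\Gamma$ fulfills $C$ with value $\lambda$. If the image is the four-valent vertex (length $0$), then $\Gamma'$ has a single branching vertex where all four legs meet. The paths $a\to c$ and $b\to d$ meet in that vertex and share no edge, so $\Gamma$ fulfills $C$ with $\lambda=0$. Conversely, suppose $\Gamma$ fulfills $C$. If the image had $a$ with $c$ (or $a$ with $d$) on one side of a positive-length bounded edge, the paths $a\to c$ and $b\to d$ would lie in disjoint halves of $\Gamma'$ and could not meet at a vertex. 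Thus the image must be of type $\{a,b\}|\{c,d\}$ or the four-valent vertex, and its length equals the common length $\lambda$ by the computation above.

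I expect the main obstacle to be the first step: making rigorous that iterated stabilization yields exactly the smoothed spanning subtree, with lengths of merged bounded edges added. This matters especially when a merged edge becomes an end, where one must check that no edge of the chain between the two branch points is ever absorbed into an end. I would argue this by noting that edges on the chain lie between two vertices that remain at least three-valent in $\Gamma'$, so they are never adjacent to a one-valent vertex during the process.
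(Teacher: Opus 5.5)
Your route is the paper's route made precise. The paper argues that forgetting ends does not change the common edges of the two paths, and that re-attaching ends does not affect the condition. Your identification of $ft_{[n]\setminus\{a,b,c,d\}}(\Gamma)$ with the smoothed spanning subtree $\Gamma'$ is what makes both assertions rigorous, and your ``if'' direction (including the four-valent case $\lambda=0$) is correct.

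The error is in the ``only if'' direction. You claim that if the image has $a$ together with $d$ on one side of the bounded edge, the paths $a\to c$ and $b\to d$ lie in disjoint halves of $\Gamma'$. That holds for the type $\{a,c\}|\{b,d\}$ but is false for $\{a,d\}|\{b,c\}$. Let $u$ be the branch vertex carrying $a,d$ and $w$ the one carrying $b,c$. Then the path $a\to c$ runs along the whole chain from $u$ to $w$, and the path $b\to d$ runs along the whole chain from $w$ to $u$. So the two paths meet, and their common edges are exactly the chain. For $\lambda=0$ this case is still excluded, because the common length would be positive. For $\lambda>0$ it is not excluded.

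Read literally, Definition \ref{defCR} has unsigned $\lambda\ge0$ and only asks that the paths meet in a vertex with common length $\lambda$. Under that reading, a curve of image type $\{a,d\}|\{b,c\}$ with chain length $\lambda>0$ fulfills $(\lambda,\cros{a}{b}{c}{d})$ without lying in the stated preimage. So the ``only if'' direction is not provable from that definition, and no rearrangement of your argument fixes this.

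What is missing is an orientation condition that the paper only makes explicit in Remark \ref{remcr}; its claimed symmetry $\cros{a}{b}{c}{d}=\cros{a}{b}{d}{c}$ also requires it. The condition is that the path from $a$ to $b$ meets the common edges $e_1,\dots,e_k$ only in the first vertex, and the path from $c$ to $d$ only in the last. Equivalently, the paths $a\to c$ and $b\to d$ traverse the common part in the same direction. With this, the $\{a,d\}|\{b,c\}$ case is excluded, because there the path $a\to b$ contains the whole chain, and the rest of your proof goes through.

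The paper's own proof skips this point too: it simply asserts that the image has $a$ and $b$ at a common vertex. Once you replace the false disjointness claim with an appeal to this convention, your argument is a more careful version of the paper's rather than a different one.
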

\begin{proof}
	Given a point $\Gamma$ in $\M_{0,n}^{\mathrm{trop}}$ that corresponds to a tropical curve fulfilling this cross-ratio $C,$ then applying the forgetful map does not change the lengths of the common edges in the paths from $a$ to $c$ and from $b$ to $d$.
	As also the ends $a,$ $b,$ $c$ and $d$ are preserved, $ft_{[n]\setminus\{a,b,c,d\}}(\Gamma)$ corresponds to the 4-marked tropical curve with $a$ and $b$ adjacent to the same vertex and with length of the bounded edge equal to $\lambda.$ 
	
	Now, we take a tropical curve that corresponds to a point in the preimage of the given point. We can obtain it by adding the former ends to the ends or the bounded edges of the tropical curve corresponding to the given point in $\M_{0,\{a,b,c,d\}}^{\mathrm{trop}}$ again, which trivially fulfills $(\lambda,\cros{a}{b}{c}{d})$. When adding these ends, the cross-ratio condition is not affected and thus, all tropical curves in the preimage fulfill this cross-ratio condition.
\end{proof}
\begin{example}[Revisited Example \ref{excr}]\label{excr2}
	Again, we consider the tropical curve $\Gamma$ depicted in the left of Figure \ref{excrim} and the cross-ratio $C_1=(\ell_1+\ell_2, \cros{1}{2}{3}{4}).$  In the same Figure, we also see how the tropical curve gets transformed when applying the forgetful morphisms $ft_7$ and $ft_{\{5,6\}}.$ As $ft_7\circ ft_{\{5,6\}}=ft_{\{5,6,7\}},$ we also see that $\Gamma$ is in the preimage under the map $ft_{\{5,6,7\}}$ of the 4-marked tropical curve depicted on the right, as described in Lemma \ref{crasft} and thus, we see again that $\Gamma$ fulfills $C_1.$
\end{example}
\subsection{The Counting Problem}\label{24}
In this section, we define the counting problem, which is the central definition of this paper. Given a general set of cross-ratio conditions, the counting problem is about finding the number of tropical curves that fulfill all of them.
\begin{definition}\label{mappi}
	Let $\U=\{S_1,...,S_{n-3}\}$ be a set of four element subsets of $[n]$ such that there exists a maximal cone of $\M_{0,n}^{\mathrm{trop}}$ on which the following map is injective. The map is $$\pi_\U=ft_{[n]\setminus S_1}\times...\times ft_{[n]\setminus S_{n-3}}:\M_{0,n}^{\mathrm{trop}}\to \prod_{S\in\U}\M_{0,S}^{\mathrm{trop}},$$ 
	where $ft_{[n]\setminus S_i}$ is as described in \ref{remforget}.
	The degree of this map is denoted as $d_\U$ and is finite and well-defined, see the following remark. If for $\U$ there is no maximal cone of $\M_{0,n}^{\mathrm{trop}}$ on which $\pi_\U$ is injective, we set $d_\U=0.$ This degree is the solution to the counting problem.
\end{definition}
\begin{remark}\label{genpos}
	The map $\pi_{\U}$ fulfills all the conditions of Proposition \ref{2.26} as $\M_{0,n}^{\mathrm{trop}}$ and $\prod_{S\in\U}\M_{0,S}^{\mathrm{trop}},$ are fans of the same dimension and the latter is an irreducible fan as a product of irreducible fans, see Definition \ref{defprod} and Proposition 2.20 of \cite{GKM09}. Thus, the degree of this map is well-defined and we can define a \emph{multiplicity} of an $n$-marked abstract rational tropical curve fulfilling the cross-ratio conditions $\{C_1,...,C_{n-3}\}$ that use the markings $\U=\{S_1,...,S_{n-3}\},$ respectively, as in Proposition \ref{2.26} applied to the map $\pi_{\U}.$
	
	Furthermore, we say that a set of cross-ratios $\{C_1,...,C_{n-3}\}$ with markings taken from $\U$ is in general position if every preimage under the map $\pi_\U$ is in the interior of a maximal cone of $\M_{0,n}^{\mathrm{trop}}.$  In other words, this means that the degree $d_\U$ does not depend on the exact lengths of the cross-ratios or which of the four markings are grouped together as long as they are in general position.
	
	Thus, what is described as the fan $Y_0$ in Proposition \ref{2.26}, are all the cross-ratios $\{C_1,...,C_{n-3}\}$ in non-general position in the setting of the map $\pi_{\U}.$ For the four markings of the sets $S_i$, the different ways how to divide them into two tuples for the cross-ratios $C_i$ correspond to different maximal cones of the moduli space $\prod_{S\in\U} \M_{0,S}.$
	
	As cross-ratios in general position must have their preimage in a maximal cone, all tropical curves described by such cross-ratios have to have only three-valent inner vertices.
	
	If we consider some inequalities for the lengths of the cross-ratios such as
	$\lambda_3>\lambda_1+\lambda_2$, it is always possible to find cross-ratios in general position satisfying these inequalities because all the ones in non-general position are of lower dimension in $Y_0,$ whereas such inequalities define a top-dimensional cone in ${(\M_{0,4}^{\mathrm{trop}})}^{n-3}.$	
\end{remark}

\begin{remark}[Multiplicity of a tropical curve]\label{mult}
	 We consider the cross-ratio conditions $C_1,...,C_{n-3}$ and the corresponding map $\pi_{\U}$ as in Definition \ref{mappi}.  In Corollary 2.27 of \cite{GKM09}, it is shown that the multiplicity of an $n$-marked tropical curve with respect to the map $\pi_{\U}$ can also be computed directly using the local coordinates of a cone. This is easier to calculate and is described now.
	This \emph{multiplicity} equals the absolute value of the determinant of the following $(n-3)\times (n-3)$-matrix, called \emph{multiplicity matrix}. The columns correspond to the bounded edges $e_1,...,e_{n-3}$ of the tropical curve and the rows correspond to the  cross-ratios $C_1,...,C_{n-3}$. The entries $a_{i,j}$ of the matrix are 1, when the edge $e_j$ contributes to the cross-ratio $C_i$ and 0 elsewhere.
	
	This matrix is the expression of the map $\pi_\U$ in local coordinates and is unique up to permutation of the rows and columns, but as permutations only change the sign of a determinant, this does not affect the multiplicity.
\end{remark}
\begin{example}

Consider the tropical curve depicted in the middle of Figure \ref{excrim} with the cross-ratios $C_1=(\ell_1+\ell_2,\cros{1}{2}{3}{4}),$ $C_2=(\ell_2+\ell_3,\cros{3}{4}{5}{6})$ and $C_3=(\ell_2,\cros{2}{5}{3}{4}).$ The corresponding multiplicity matrix is given by 
$$\begin{blockarray}{cccc}
	e_1& e_2&e_3 &  \\
	\begin{block}{(ccc)c}
		1 & 1& 0 & C_1 \\
		0 & 1 & 1 & C_2 \\
		0 & 1 & 0  &C_3  \\
	\end{block}
\end{blockarray},$$ where $e_i$ is the bounded edge with length $\ell_i.$
The absolute value of this determinant is 1, so the multiplicity of this tropical curve with these cross-ratios is 1.

\end{example}

\begin{example}\label{exnongen}
	Let $\U=\{\{3,4,5,6\},\{1,3,4,5\},\{2,4,5,6\}\}.$ Consider the 6-marked tropical curves depicted in Figure \ref{nongeneral} and the three cross-ratios $C_1=(\lambda_1,\cros{3}{4}{5}{6})$, $C_2=(\lambda_2,\cros{1}{3}{4}{5})$ and $C_3=(\lambda_3,\cros{2}{4}{5}{6})$ and consider the map $$\pi_\U:\M_{0,6}^{\mathrm{trop}}\to \M_{0,\{3,4,5,6\}}^{\mathrm{trop}}\times\M_{0,\{1,3,4,5\}}^{\mathrm{trop}}\times\M_{0,\{2,4,5,6\}}^{\mathrm{trop}}.$$
	
	For $\lambda_1=2,$ $\lambda_2=\lambda_3=1,$ we can check that the only tropical curve fulfilling these conditions is depicted in the right of Figure \ref{nongeneral} with all bounded edges having length 1. Thus, the cross-ratios with these lengths are in general position.
	For $\lambda_1=\lambda_2=\lambda_3=0,$ the tropical curve on the left fulfills these cross-ratios for any length $\ell$ of the bounded edge, so in the preimage of the point in $\M_{0,\{3,4,5,6\}}^{\mathrm{trop}}\times\M_{0,\{1,3,4,5\}}^{\mathrm{trop}}\times\M_{0,\{2,4,5,6\}}^{\mathrm{trop}}$ corresponding to these cross-ratio conditions there are infinitely many tropical curves. This happens because these cross-ratios are in non-general position.
	\begin{figure}
		\centering
		\includegraphics[scale=0.4]{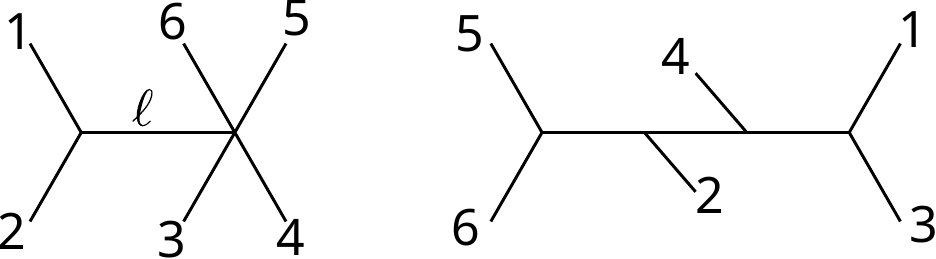}
		\caption{Two tropical curves that fulfill  the same cross-ratios but with different lengths, see Example \ref{exnongen}}
		\label{nongeneral}
	\end{figure}
\end{example}

\begin{definition}
Let $\Gamma$ be a tropical curve. Because the underlying graph of $\Gamma$ is a connected tree, for a $k$-valent vertex $v$ we can partition all edges of the graph into $k$ sets, according to which of the $k$ adjacent edges of $v$ is part of the path from $v$ to them. We say that these  sets are different \emph{branches} of $v$. 

\end{definition}

\subsection{Triangulations and Cross-Ratios} \label{25}
Let $T$ be a triangulation of an $n$-gon. In the introduction of \cite{R24} it is defined how we can associate a set $\U(T)$ of the markings of $n-3$ cross-ratios to it. It is found in \cite{R24} that using such sets $\U(T),$ the solution to the counting problem $d_{\U(T)}$ can then be determined using just the combinatorics of the triangulation with methods from algebraic geometry. In this paper, we enhance this definition by not only associating a set $\U(T)$ but also a set of $n-3$ concrete tropical cross-ratios to a triangulation (so, we also consider a length and the various choices of dividing a four-tuple). Our main result Theorem \ref{allcases}, see also in the introduction Theorem \ref{mainint}, describes the concrete set of tropical curves in the preimage of $d_{\U(T)}$ and their multiplicities for any choice of lengths and dividing the four-tuple of the cross-ratios.

\begin{definition}[Triangulations and cross-ratios] \label{defthreeint}
	Consider a regular $n$-gon with $n\ge4$ and edges marked clockwise as $1,...,n $ and a triangulation, denoted as $T,$ of it, where a length $\lambda\in\R_{\ge 0}$ is assigned to each diagonal of $T$. The triangles in this triangulation can either be adjacent to 0, 1 or 2 edges of the $n$-gon. If a triangle is adjacent to 0 edges, we call it an \emph{inner triangle}, if it is adjacent to 1, then we call it an \emph{outer triangle} and if it is adjacent to 2 edges, then we call it a \emph{border triangle}.
	
	Every diagonal of $T$ is adjacent to two vertices that are each adjacent to two edges of the $n$-gon. W.l.o.g, let us say, these four edges are $k,$ $k+1,$ $l$ and $l+1$. These four markings together with the length of the diagonal define a cross-ratio.
	For this cross-ratio we have three different choices of dividing these four markings into two tuples. We say that the \emph{dual  interpretation} of the cross-ratio is $(\lambda,(k,l+1|k+1,l)) $, the \emph{neighboring one} is $(\lambda,(k,k+1|l,l+1)) $ and the \emph{intersecting one} is $(\lambda,(k,l|k+1,l+1)),$ see Figure \ref{threeint}, where on the top left, the considered part of the $n$-gon is depicted. By Remark \ref{remcr}, these three interpretations are well-defined.
	
		\begin{figure}
		\centering
		\includegraphics[scale=0.4]{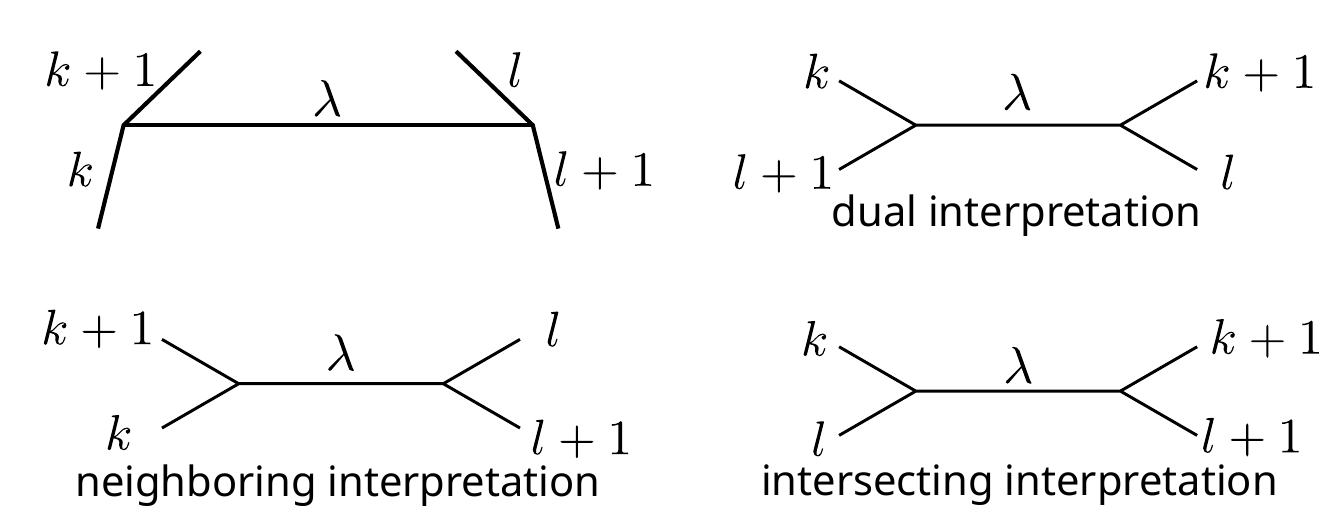}
		\caption{On the upper left, we see a sketch of a diagonal of the $n$-gon. Further, there are the three interpretations of a tropical cross-ratio, see Definition \ref{defthreeint}}
		\label{threeint}
	\end{figure}
	We denote the corresponding set of markings of the cross-ratios defined by $T$ as $\U(T).$
\end{definition}

\begin{remark}
	In section \ref{2ndint}, we will see that for cross-ratios defined by triangulations, the conditions for the map $\pi_{\U(T)}$ of Definition \ref{mappi} are always fulfilled, and thus, the counting problem to determine $d_{\U(T)}$ always leads to a finite number of tropical curves, counted with multiplicity, that fulfill the demanded conditions. Depending on the choices of dividing each set of 4 markings (i.e. interpreting a cross-ratio in the dual, neighboring or intersecting way) and the lengths of the cross-ratios, we obtain different (and also a different number of) tropical curves in $\pi_{\U(T)}^{-1}(C_1,...,C_{n-3}).$ In Theorem \ref{allcases}, we describe the tropical curves in the preimage of $d_{\U(T)}$ for any such choice.
\end{remark}
 In section \ref{2ndint}, starting with the case where all cross-ratios are interpreted in the neighboring way, we prove this Theorem \ref{allcases}, mentioned as Theorem \ref{mainint} in the introduction. Additionally, for the case where all cross-ratios are interpreted in the dual way, another method of obtaining the demanded tropical curves is presented in the appendix in section \ref{1stint}.

\section{A Reduction Step Used in the Proof of the Main Theorem}\label{general}
In this section, some general Lemmas are introduced that are later used in proving the main Theorem \ref{allcases}, mentioned as Theorem \ref{mainint} in the introduction.
Lemma \ref{trop25} is stated and proven algebraically in \cite[Lemma 2.5]{R24}. Here, we prove it using tropical methods and a part of it is outsourced to Lemma \ref{bijection}. 

\begin{lemma}\label{bijection}
	Let $\U=\{S_1,...,S_{n-3}\}$ be a set of four element subsets of $[n].$ Suppose there exists a partition $[n]=\{i_1,i_2,i_3\}\sqcup X\sqcup Y$ such that $S_i\subseteq\{i_1,i_2,i_3\}\cup X$ for all $i\in \{1,...,k\}$ and $S_i\subseteq\{i_1,i_2,i_3\}\cup Y$ for all $i\in\{k+1,...,n-3\}$. Define $\U_X=\{S\in\U:S\subseteq\{i_1,i_2,i_3\}\cup X\}$, and similarly $\U_Y$. Let $\{C_1,...,C_{n-3}\}$ be a set of cross-ratios in general position, where $C_i$ uses the markings of $S_i.$	
	Then, there is a bijection between tropical curves $\Gamma$ in $\pi_{\U}^{-1}(C_1,...,C_{n-3})$ and pairs of tropical curves $(\Gamma_X,\Gamma_Y)$ in $\pi_{\U_X}^{-1}\times\pi_{\U_Y}^{-1}((C_1,...,C_k),(C_{k+1},...,C_{n-3}))$ and the multiplicity $m$ of $\Gamma$ with respect to $\pi_\U$ is the product of the multiplicities $m_X$ and $m_Y$ of $\Gamma_X$ and $\Gamma_Y$ with respect to $\pi_{\U_X}$ and $\pi_{\U_Y},$ respectively, where $\pi_\U,$ $\pi_{\U_X}$ and $\pi_{\U_Y}$ are as in Definition \ref{mappi}.
\end{lemma}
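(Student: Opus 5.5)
The bijection will be the pair of forgetful maps $\Gamma\mapsto(\Gamma_X,\Gamma_Y):=(ft_Y(\Gamma),ft_X(\Gamma))$, where $ft_Y$ forgets the ends in $Y$ and $ft_X$ those in $X$. Note that $|\U_X|=k$ and $|\U_Y|=n-3-k$, and the dimension count $(|X|+3)-3+(|Y|+3)-3=n-3$ forces $k=|X|$ and $n-3-k=|Y|$, so $\pi_{\U_X}$ and $\pi_{\U_Y}$ are maps between fans of equal dimension (for the expected degree). By Lemma \ref{crasft}, each $C_i$ depends only on the image of $\Gamma$ under forgetting everything outside $S_i$. Since $S_i\subseteq\{i_1,i_2,i_3\}\cup X$ for $i\le k$, this factors through $ft_Y$. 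Hence $\Gamma_X$ fulfills $C_1,\dots,C_k$, and likewise $\Gamma_Y$ fulfills $C_{k+1},\dots,C_{n-3}$. So the map is well defined into the product of preimages.

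For the inverse, I glue. Given $\Gamma_X$ and $\Gamma_Y$, both contain the ends $i_1,i_2,i_3$. In a rational tropical tree, these three ends determine a unique ``tripod'': either a vertex where their paths meet, or, after forgetting, the unique vertex of the $3$-marked curve. In $\Gamma_X$, the rest of the curve hangs off the three legs of this tripod as subtrees carrying $X$-ends, attached at points of the legs at certain distances from the center. I will reconstruct $\Gamma$ by superimposing the two curves along the common tripod. Concretely, for each leg $i_j$, I merge the attachment points of the $X$-subtrees and the $Y$-subtrees according to their distances from the central vertex. I then add at the center any subtrees attached there.

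The key claim is that this is forced, i.e.\ inverse to $(ft_Y,ft_X)$. A tree with ends $\{i_1,i_2,i_3\}\sqcup X\sqcup Y$ is determined by its projections forgetting $Y$ and forgetting $X$ exactly when no $X$-end and $Y$-end branch off at the same point of the tripod with an ambiguous order. Positions along legs are determined metrically. The only possible non-uniqueness is when an $X$-subtree and a $Y$-subtree attach at the same point, or both at the center. That is where general position enters. Such a coincidence gives either a vertex of valence $\ge4$, which is excluded by Remark \ref{genpos}, or, if we resolve it, a zero-length edge, which again lies outside the interiors of maximal cones. So for general $C_i$, every glued curve is trivalent and unique.

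For multiplicities, I order the bounded edges of $\Gamma$. Each bounded edge of $\Gamma$ lies either inside an $X$-subtree, inside a $Y$-subtree, or on a leg segment between consecutive attachment points. After gluing, $\Gamma_X$'s leg edges are unions of segments of $\Gamma$. I would pass to coordinates $\ell(\Gamma_X)$ and $\ell(\Gamma_Y)$: the linear map from edge lengths of $\Gamma$ to (edge lengths of $\Gamma_X$, edge lengths of $\Gamma_Y$) is unimodular, because the leg segments interleave and partial sums are a triangular change of basis. I expect this step to be the main obstacle: carefully checking that this change of coordinates is unimodular on the relevant cone.

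Given unimodularity, the multiplicity matrix of $\Gamma$ from Remark \ref{mult}, rewritten in these coordinates, becomes block diagonal. This is because $C_i$ for $i\le k$ depends only on $\Gamma_X$ lengths. Therefore $\det = m_X m_Y$ up to sign. Alternatively, I could phrase this via the index formula of Proposition \ref{2.26}, noting that $(ft_Y,ft_X)$ is a lattice isomorphism between the cone of $\Gamma$ and the product of the cones of $\Gamma_X$ and $\Gamma_Y$.
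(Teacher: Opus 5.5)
Your proposal is correct and is essentially the paper's proof: the bijection is the same, $ft_Y\times ft_X$ with inverse given by gluing $\Gamma_X$ and $\Gamma_Y$ along the common tripod of $i_1,i_2,i_3$; and your factorization $M_\U=\mathrm{diag}(M_{\U_X},M_{\U_Y})\cdot L$ is a cleaner form of the paper's column operations, which subtract the columns of consecutive tripod edges. Here $L$ is the edge-length map of $ft_Y\times ft_X$, and it is indeed unimodular because it factors through the distances of the attachment points from the center along each leg. One slip: $|X|+|Y|=n-3=k+(n-3-k)$ does not force $k=|X|$, so this equality has to be taken as part of the setup (it is what makes $\pi_{\U_X},\pi_{\U_Y}$ maps between fans of equal dimension as in Definition \ref{mappi}); when it fails, $\pi_\U$ has rank $<n-3$ on every maximal cone, so general position forces $\pi_\U^{-1}(C_1,\dots,C_{n-3})=\emptyset$, the product side is empty too (any pair would glue to a preimage), and the statement is vacuous, as in the first case of Lemma \ref{trop25}.
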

\begin{proof}
	First, we show that each tropical curve $\Gamma$ in $\pi_{\U}^{-1}(C_1,...,C_{n-3})$ is in bijection with a pair of two tropical curves $(\Gamma_X,\Gamma_Y)$ in $\pi_{\U_X}^{-1}\times\pi_{\U_Y}^{-1}((C_1,...,C_k),(C_{k+1},...,C_{n-3})),$ where $\Gamma_X$ uses the markings of $\{i_1,i_2,i_3\}\cup X$ and $\Gamma_Y$ uses the markings of $\{i_1,i_2,i_3\} \cup Y.$ Afterwards, we consider the multiplicities.
	
	The idea of this proof is kept in the following commuting diagram. The diagram commutes because none of the maps affect the cross-ratios.
	\begin{figure}[h]
		\begin{tikzcd}
			\M_{0,n}^{\mathrm{trop}} \arrow[r, "ft_Y\times ft_X", leftrightarrow] \arrow[d,"\pi_{\U}"]
			& \M_{0,\{i_1,i_2,i_3\}\cup X}^{\mathrm{trop}}\times\M_{0,\{i_1,i_2,i_3\}\cup Y}^{\mathrm{trop}} \arrow[d, "\pi_{\U_X}\times\pi_{\U_Y}" ] \\
			\prod_{S\in\U}\M_{0,S}^{\mathrm{trop}} \arrow[r,equal]
			& |[]| \prod_{S\in\U_X}\M_{0,S}^{\mathrm{trop}}\times\prod_{S\in\U_Y}\M_{0,S}^{\mathrm{trop}}
		\end{tikzcd}
	\end{figure}
	
	For obtaining the bijection, we first take a tropical curve $\Gamma$ in $\pi_\U^{-1}(C_1,...,C_{n-3}).$ Now, we apply a forgetful morphism as in Definition \ref{forget} and Remark \ref{remforget}:
	$$ft_Y\times ft_X:\M_{0,n}^{\mathrm{trop}}\to\M_{0,\{i_1,i_2,i_3\}\cup X}^{\mathrm{trop}}\times\M_{0,\{i_1,i_2,i_3\}\cup Y}^{\mathrm{trop}}.$$
	Thus, a tropical curve $\Gamma$ gets mapped to a pair of tropical curves $(\Gamma_X,\Gamma_Y),$ where $\Gamma_X:=ft_Y(\Gamma)$ is a tropical curve in $\M_{0,\{i_1,i_2,i_3\}\cup X}^{\mathrm{trop}},$  and $\Gamma_Y:=ft_X(\Gamma)$ is a tropical curve in $\M_{0,\{i_1,i_2,i_3\}\cup Y}^{\mathrm{trop}}.$ Also, $\Gamma_X$ and $\Gamma_Y$ are in $\pi_{\U_X}^{-1}(C_1,...,C_k)$ and $\pi_{\U_Y}^{-1}(C_{k+1},...,C_{n-3})$, respectively as $ft_Y \times ft_X$ does not affect the cross-ratios whose markings are not forgotten.
	
	Now, we construct the inverse function to $ft_Y\times ft_X.$ Let $(\Gamma_X,\Gamma_Y)$ be in $\pi_{\U_X}^{-1}\times\pi_{\U_Y}^{-1}((C_1,...,C_k),(C_{k+1},...,C_{n-3})).$ For $A\in\{X,Y\},$ let $v_A$ be the unique vertex in $\Gamma_A$ that the three paths from $i_1$ to $i_2,$ the one from $i_2$ to $i_3$ and the one from $i_3$ to $i_1$ have in common and denote by $P_{j,A}$ the path from $v_A$ to the end $i_j.$ In the disjoint union of $\Gamma_X$ and $\Gamma_Y,$ identify the vertices $v_X=v_Y$ and the paths $P_{j,X}=P_{j,Y}$ for $j\in\{1,2,3\}.$ We obtain a tropical curve $\Gamma$ in $\M_{0,n}^{\mathrm{trop}},$ which is the preimage of $(\Gamma_X,\Gamma_Y)$ under $ft_X \times ft_Y.$ A sketch of this gluing of $\Gamma_X$ and $\Gamma_Y$ can be seen in Figure \ref{glueinggraphs}. This gluing is inverse to applying the forgetful morphisms as in the preceding paragraph, because the tropical curve $\Gamma$ gets reconstructed, and thus, $ft_Y\times ft_X$ is indeed a bijection.

	Now, we want to show that the multiplicity $m$ of $\Gamma$ with respect to $\pi_{\U}$ is the same as the product of the multiplicities $m_X$ and $m_Y$ of $\Gamma_X$ and $\Gamma_Y$ with respect to $\pi_{\U_X}$ and $\pi_{\U_Y},$ where $\pi_\U,$ $\pi_{\U_X}$ and $\pi_{\U_Y}$ are as in Definition \ref{mappi}. 
	
	We observe that the bounded edges of $\Gamma$ that are on the path from $v_A$ to an end $i_j$ can contribute to cross-ratios in $\U_X$ and $\U_Y. $ Every other bounded edge can only contribute to either $\U_X$ or $\U_Y.$ For a bounded edge $e$ that contributes to cross-ratios in both of  $\U_X$ and $\U_Y,$ we consider the adjacent vertex $v'$, that is further away from $v_A.$ Out of the two other edges (which also can be ends) adjacent to this vertex $v',$ one of them is also on the path from $v_A$ to $i_j.$ We call this edge $e'.$ The other edge $e''$ either just contributes to cross-ratios in $\U_X$ or in $\U_Y$ or is an end with a marking in $X$ or $Y$. W.l.o.g, we say that either the edge contributes to cross-ratios in $\U_X$ or is an end in $X.$ A sketch can be seen in Figure \ref{glueinggraphs}. Thus, for any cross-ratio in $\U_Y,$ either both or none of $e$ and $e'$ contribute to it. In the tropical curve $\Gamma_X,$ we also have the edge $e''.$ The vertex adjacent to $e''$ that is nearer to $v_X$ is adjacent to two further edges. We call them $\tilde{e}$ and $\tilde{e}',$ where $\tilde{e}$ is the one closer to $v_X,$ see Figure \ref{glueinggraphs}.
	
	\begin{figure}
		\centering
		\includegraphics[scale=0.4]{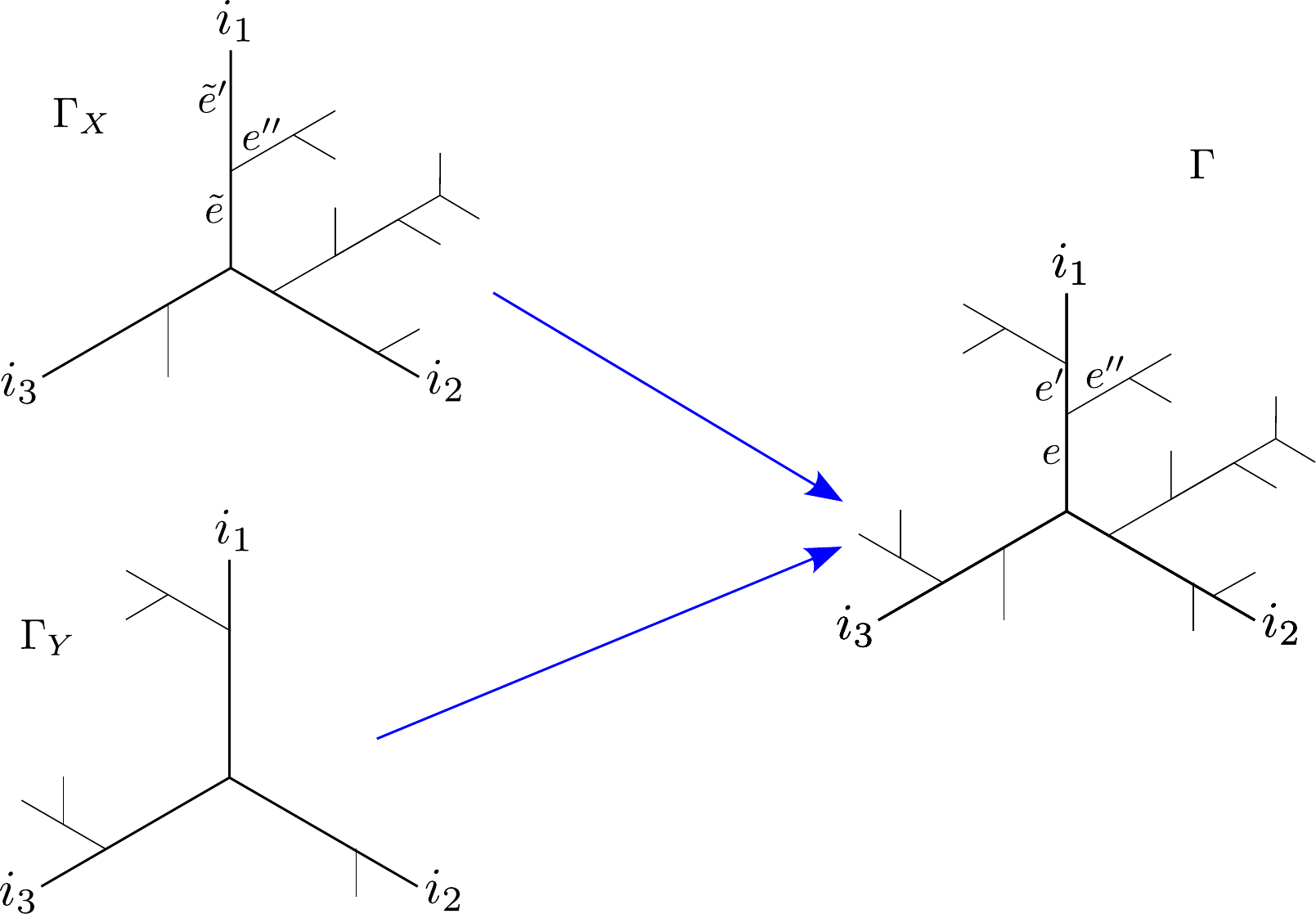}
		\caption{The gluing of $\Gamma_X$ and $\Gamma_Y$ as described in the proof of Lemma \ref{bijection}}
		\label{glueinggraphs}
	\end{figure}
	
	Now, we consider the multiplicity matrix $M_\U$ of $\Gamma$ with respect to $\pi_\U.$ Consider the two columns corresponding to the edges $e$ and $e'$. If we subtract the column of $e'$ from the column of $e,$ the resulting column has only entries in rows that correspond to cross-ratios in $\U_X,$ because for all cross-ratios in $\U_Y$, as either both or none of these two edges contribute to them, their entries get deleted with this matrix operation.  Thus, the entries in this column in the rows corresponding to cross-ratios in $\U_X$ are the same as the ones in the column of $\tilde{e}$ in the multiplicity matrix $M_{\U_X}$ of $\Gamma_X$ with respect to $\pi_{\U_X}$ after subtracting the column of $\tilde{e}'$ from the column of $\tilde{e}.$
	$$\begin{pNiceArray}[first-row,last-col]{cccccc}
			e_1&\dots&e_k&e_{k+1}&\dots&e_{n-3}&\\
			\Block{6-6}<\Huge>{M_{\U}}&& & &  & & C_1 \\
			&&&&&&\vdots \\
			&& && &  & C_k \\
			&&& & & & C_{k+1} \\
			&&& & &&\vdots  \\
			&&&&&&C_{n-3}
	\end{pNiceArray}
	\sim
	\begin{pNiceArray}[first-row,last-col]{cccccc}
		e_1&\dots&e_k&e_{k+1}&\dots&e_{n-3}&\\
		\Block{3-3}<\Huge>{M_{\U_X}}&& & \Block{3-3}<\Huge>{0} &  & & C_1 \\
		&&&&&&\vdots \\
		&& && &  & C_k \\
		\Block{3-3}<\Huge>{0} &&& \Block{3-3}<\Huge>{M_{\U_Y}}& & & C_{k+1} \\
		&&& & &&\vdots  \\
		&&&&&&C_{n-3}
	\end{pNiceArray}$$
	This matrix operation is now applied to every column that corresponds to an edge that contributes to cross-ratios in both $\U_X$ and $\U_Y$. We apply it first at the columns corresponding to bounded edges that are closer to $v_A$ because only then the above arguments still hold as the columns needed for an operation are not edited until then.
	Now, we can sort the rows and columns of the $M_\U$ such that the resulting matrix has block diagonal form with one block only having entries in the rows corresponding to cross-ratios in $\U_X$ and the columns corresponding to edges $e$, where the adjacent edge $e''$ only contributes to cross-ratios in $\U_X$ or is an end with	marking in $X$ and analogously for the other block and $Y.$ These two blocks are identical to the matrices that we obtain from the multiplicity matrices $M_{\U_X}$ and $M_{\U_Y}$ of $\Gamma_X$ and $\Gamma_Y$ with respect to $\pi_{\U_X}$ and $\pi_{\U_Y}$ after applying the corresponding matrix operations, which were described before. Thus, the absolute value of the determinant of $M_\U$ is the product of the absolute values of the determinants of $M_{\U_X}$ and $M_{\U_Y},$ which means that $m=m_X\cdot m_Y.$
\end{proof}
\begin{lemma}\label{trop25}
(See Lemma 2.5 of \cite{R24}.)	Let $\U=\{S_1,...,S_{n-3}\}$ be a set of four-tuples with distinct elements of $[n].$ Suppose there exists a partition $[n]=\{i_1,i_2,i_3\}\sqcup X\sqcup Y$ such that for all $S\in\U$, we have either $S\subseteq\{i_1,i_2,i_3\}\cup X$ or $S\subseteq\{i_1,i_2,i_3\}\cup Y$. Define $\U_X=\{S\in\U:S\subseteq\{i_1,i_2,i_3\}\cup X\}$, and similarly $\U_Y$. Then, we find that 
	 $$d_{\U}=\begin{cases}
		d_{\U_X}\cdot d_{\U_Y}&\abs{\U_X}=\abs{X}\\
		0&\text{otherwise}.
	\end{cases}$$
	
	Here, $d_{\U},$ $d_{\U_X}$ and $d_{\U_Y}$ are the degrees of the respective maps as defined in Definition \ref{mappi}.
\end{lemma}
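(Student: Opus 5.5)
The plan is to derive the degree formula from Lemma \ref{bijection} and a dimension count. I will work with cross-ratios $C_1,\dots,C_{n-3}$ in general position (Remark \ref{genpos}), with $C_i$ using the markings of $S_i$. I order $\U$ so that $\U_X=\{S_1,\dots,S_k\}$ and $\U_Y=\{S_{k+1},\dots,S_{n-3}\}$.

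If some $S$ lies in both $\{i_1,i_2,i_3\}\cup X$ and $\{i_1,i_2,i_3\}\cup Y$, then $S\subseteq\{i_1,i_2,i_3\}$. This is impossible since $|S|=4$, so $\U=\U_X\sqcup\U_Y$ and $|\U_X|+|\U_Y|=n-3=|X|+|Y|$. Lemma \ref{bijection} then gives a multiplicity-preserving bijection between $\pi_\U^{-1}(C_1,\dots,C_{n-3})$ and $\pi_{\U_X}^{-1}(C_1,\dots,C_k)\times\pi_{\U_Y}^{-1}(C_{k+1},\dots,C_{n-3})$, with $m=m_X m_Y$. Summing over the fiber gives
\[
d_\U=\Big(\sum_{\Gamma_X} m_X\Big)\Big(\sum_{\Gamma_Y} m_Y\Big).
\]
Note that $\M_{0,\{i_1,i_2,i_3\}\cup X}^{\mathrm{trop}}$ has dimension $|X|$.

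In the case $|\U_X|=|X|$ we also have $|\U_Y|=|Y|$. So $\pi_{\U_X}$ and $\pi_{\U_Y}$ are maps between fans of equal dimension, exactly of the type in Definition \ref{mappi}. I will check that the cross-ratios $(C_1,\dots,C_k)$ are in general position for $\pi_{\U_X}$, and similarly for $\pi_{\U_Y}$. If a preimage $\Gamma_X$ lay on a lower-dimensional cone, gluing it as in the proof of Lemma \ref{bijection} with any $\Gamma_Y$ would give a preimage of $\pi_\U$ off a maximal cone. That contradicts general position, so every preimage of $\pi_{\U_X}$ lies in a maximal cone.

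However, general position of the full tuple only says the tuple avoids $Y_0$. I must make sure the projected point avoids the exceptional set $Y_0^X$ of $\pi_{\U_X}$. Here I will argue that $Y_0^X\times\prod_{S\in\U_Y}\M_{0,S}^{\mathrm{trop}}$ is lower dimensional, so generic choices avoid it. I then choose $C$ generic for all three maps simultaneously, which is allowed because the degree does not depend on the generic point. The two sums are then $d_{\U_X}$ and $d_{\U_Y}$ by Proposition \ref{2.26}.

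I also need consistency with the convention $d=0$ when no maximal cone is injective. If $\pi_{\U_X}$ is injective on no maximal cone, its fiber counts with multiplicity $0$ (the determinants vanish), and the factor is $0$ either way.

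In the other case, $|\U_X|\neq|X|$. Without loss of generality $|\U_X|>|X|$; otherwise $|\U_Y|>|Y|$ and I swap the roles. Then the image of $\M_{0,\{i_1,i_2,i_3\}\cup X}^{\mathrm{trop}}$ under $\pi_{\U_X}$ has dimension at most $|X|<|\U_X|=\dim\prod_{S\in\U_X}\M_{0,S}^{\mathrm{trop}}$. Hence for a generic choice of $(C_1,\dots,C_k)$ the fiber is empty. By the bijection, the fiber of $\pi_\U$ over a generic point is empty, so $d_\U=0$.

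Alternatively, every multiplicity matrix of $\pi_\U$ is, after the column operations in the proof of Lemma \ref{bijection}, block-shaped with a $|\U_X|\times|X|$ block in the $X$-rows and zeros elsewhere in those rows. Since this block has more rows than columns, the rows are dependent and the determinant is $0$. So $\pi_\U$ is injective on no maximal cone and $d_\U=0$ by definition.

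The main obstacle is the genericity bookkeeping: showing that general position for $\pi_\U$ can be arranged to imply general position for both factor maps. I will handle it through the dimension argument above rather than trying to derive it directly.
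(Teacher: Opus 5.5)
Your proposal is correct and follows essentially the same route as the paper: the case $\abs{\U_X}\neq\abs{X}$ is handled by a dimension (overdetermination) count, and the case $\abs{\U_X}=\abs{X}$ by the multiplicity-preserving bijection of Lemma~\ref{bijection}, which factors the fiber sum into $d_{\U_X}\cdot d_{\U_Y}$. Your additional bookkeeping makes explicit two points the paper leaves implicit: that $(C_1,\dots,C_k)$ and $(C_{k+1},\dots,C_{n-3})$ can be chosen generic for $\pi_{\U_X}$ and $\pi_{\U_Y}$ at the same time, and that the result is consistent with the convention $d=0$.
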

\begin{proof}

First, we consider the case, where $\abs{\U_X}\ne\abs{X}.$ Then either $\U_X$ or $\U_Y$ contains less cross-ratios than there are markings in $X$ or $Y$, respectively. Let us say, $\abs{\U_X}>\abs{X}.$ Then, the dimension of the moduli space $\M_{0,\{i_1,i_2,i_3\}\cup X}^\mathrm{trop}$ is smaller than the number of cross-ratios in $\U_X.$ 
This means that a tropical curve in $\M_{0,\{i_1,i_2,i_3\}\cup X}^{\mathrm{trop}}$ would have to fulfill more cross-ratio conditions than it has bounded edges which yields an overdetermined system of equalities if the cross-ratios are in general position and thus, the degree $d_{\U_X}$ is 0. Then, the degree of the map $\pi_\U$ also is 0 because if there are no tropical curves fulfilling all cross-ratio conditions in $\U_X,$ then there cannot be any tropical curves fulfilling all cross-ratio conditions in $\U=\U_X\cup\U_Y.$ Thus, we can now assume that $\abs{\U_X}=\abs{X}$.		

Remember, the degree $d_\U$ is the sum over all tropical curves with their multiplicities in $\pi_\U^{-1}(p)$ for a general positioned $p\in \prod_{S\in\U}\M_{0,S}^{\mathrm{trop}},$ see Definition \ref{mappi} and Remark \ref{genpos}. 

We recall that we know from Lemma \ref{bijection} that each such tropical curve $\Gamma$ in $\pi_{\U}^{-1}$ is in bijection with two tropical curves $\Gamma_X$ and $\Gamma_Y$ in $\M_{0,\{i_1,i_2,i_3\}\cup X}^{\mathrm{trop}}$ and $\M_{0,\{i_1,i_2,i_3\}\cup Y}^{\mathrm{trop}},$ where the $\Gamma_X$ uses the markings of $\{i_1,i_2,i_3\}\cup X$ and $\Gamma_Y$ uses the markings of $\{i_1,i_2,i_3\}\cup Y.$ Thus, we can count pairs of tropical curves $(\Gamma_X,\Gamma_Y)$ in $\M_{0,\{i_1,i_2,i_3\}\cup X}^{\mathrm{trop}}\times
\M_{0,\{i_1,i_2,i_3\}\cup Y}^{\mathrm{trop}}$ instead of tropical curves $\Gamma$ in $\M_{0,n}^{\mathrm{trop}}.$

	Now, we again make use of Lemma \ref{bijection}, from where we also know that $m=m_X\cdot m_Y, $ where $m$ is the multiplicity of $\Gamma$ with respect to $\pi_{\U}$ and $m_X$ and $m_Y$ are the multiplicities of $\Gamma_X$ and $\Gamma_Y$ with respect to $\pi_{\U_X}$ and $\pi_{\U_Y}.$ This implies that we can count pairs of tropical curves $(\Gamma_X,\Gamma_Y)$ with multiplicity $m_X\cdot m_Y$ instead of tropical curves $\Gamma$ with multiplicity $m.$
	
	This sum can now be separated into a product of two sums, where the first sum goes over all tropical curves $\Gamma_X$ counted with multiplicity $m_X$ and the second one over all tropical curves $\Gamma_Y$ counted with multiplicity $m_Y.$ By definition of the degree, this is just the product of $d_{\U_X}$ and $d_{\U_Y}$ and thus, $d_\U=d_{\U_X}\cdot d_{\U_Y}.$	
\end{proof}

\begin{corollary}[Outer Triangles]\label{trop26}
	Let us consider a triangulation $T$, that defines a set of cross-ratios, as in Definition \ref{defthreeint}, and assume that there is an outer triangle. The conditions mentioned in Lemma \ref{bijection} and Lemma \ref{trop25} are fulfilled, if we take $i_1$ to be the marking of the edge of this triangle that is also an edge of the $n$-gon and $i_2$ and $i_3$ be the ones of the edges of the $n$-gon adjacent to the opposite vertex, such that the marking $i_2$ appears first when reading all markings in clockwise direction starting at $i_1$. We further define $X$ as the set of all markings between $i_1$ and $i_2$ when looking at the edges of the $n$-gon in clockwise direction, and $Y$ as the set of all markings between $i_3$ and $i_1.$ 	
	
	By applying Lemma \ref{bijection} several times, we obtain a bijection between the set $\U(T)$ of markings of the cross-ratios defined by $T$ and the union of several sets  $\U(T_1),...,\U(T_k)$ of markings of cross-ratios defined by triangulations $T_1,...,T_k$ without outer triangles and the degree $d_{\U(T)}$ equals the product $d_{\U(T_1)}\cdot...\cdot d_{\U(T_k)},$ see Definition \ref{mappi}.
\end{corollary}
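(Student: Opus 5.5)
The plan is to verify the hypotheses of Lemma \ref{bijection} directly for the chosen $i_1,i_2,i_3$, $X$ and $Y$, and then induct on the number of outer triangles.

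Let the outer triangle $\Delta$ have polygon edge $i_1$ and two diagonals $\delta_X$ and $\delta_Y$. The diagonal $\delta_X$ runs from the start vertex of $i_1$ (clockwise) to the opposite vertex $w$, and $\delta_Y$ runs from $w$ to the end vertex of $i_1$. The vertex $w$ is adjacent to the polygon edges $i_2$ and $i_3$, and $i_2$ comes first clockwise after $i_1$. Removing $\Delta$ splits the $n$-gon into two sub-polygons. $P_X$ is bounded by $\delta_X$ and the polygon edges $i_1+1,\dots,i_2$; $P_Y$ is bounded by $\delta_Y$ and the edges $i_3,\dots,i_1-1$. Every diagonal of $T$ other than $\delta_X$ and $\delta_Y$ lies inside $P_X$ or $P_Y$.

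First, I check that each four-tuple lies in $\{i_1,i_2,i_3\}\cup X$ or in $\{i_1,i_2,i_3\}\cup Y$. By Definition \ref{defthreeint}, the four-tuple of a diagonal consists of the polygon edges adjacent to its two endpoints.
\begin{itemize}
\item For a diagonal inside $P_X$, both endpoints are vertices of $P_X$. The polygon edges at such a vertex lie in $X\cup\{i_1,i_2,i_3\}$: the only vertices of $P_X$ that touch edges outside $X$ are the endpoint of $\delta_X$ on $i_1$ and the vertex $w$, and these touch only $i_1$, $i_2$, $i_3$.
\item The diagonal $\delta_X$ itself has the four-tuple $\{i_1-1,i_1,i_2,i_3\}$ or $\{i_1,i_1+1,i_2,i_3\}$, depending on which endpoint of $i_1$ it uses. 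That endpoint decides whether its tuple is in $\{i_1,i_2,i_3\}\cup X$ or in $\{i_1,i_2,i_3\}\cup Y$.
\item Symmetrically for $P_Y$ and $\delta_Y$.
\end{itemize}
So every $S\in\U(T)$ falls on one side, and Lemma \ref{bijection} and Lemma \ref{trop25} apply. I must label the endpoints carefully here; I expect this to be the fiddly step.

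Next, I identify $\U_X$ with $\U(T_X)$ for a triangulation $T_X$ of a smaller polygon whose edges are marked by $X$ together with one or two markings from $\{i_1,i_2,i_3\}$. I try $T_X=P_X$ with $\delta_X$ relabelled as a polygon edge, or $P_X\cup\Delta$ if $\delta_X$ belongs to the $X$-side. The tuple of each inner diagonal depends only on the polygon edges at its endpoints. I therefore check that the edge lying along $\delta_X$ can be marked by the marking from $\{i_1,i_2,i_3\}$ that actually occurs in the neighbouring tuples, and that the $X$-side and $Y$-side counts match. This is the main obstacle. If it fails literally, I fall back on counting: $|\U_X|=|X|$ holds exactly when the sub-configuration is the triangulation of an $(|X|+3)$-gon, which has $|X|$ diagonals. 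The remaining markings of $\{i_1,i_2,i_3\}$ are spectators that Lemma \ref{bijection} keeps in both factors anyway.

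Finally, $T_X$ and $T_Y$ have fewer outer triangles than $T$; inner triangles remain inner, and $\Delta$ becomes a border triangle or disappears. I iterate the reduction, using induction on the number of outer triangles, until no outer triangles remain. Repeated applications of Lemma \ref{bijection} compose to a bijection of preimages with multiplicative multiplicities, and Lemma \ref{trop25} gives $d_{\U(T)}=\prod_j d_{\U(T_j)}$.
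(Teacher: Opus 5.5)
Your overall route is the paper's: check the partition hypothesis at the outer triangle $\Delta$, cut along it, and induct on the number of outer triangles. The gap is the step you yourself call the main obstacle, namely identifying $\U_X$ with $\U(T_X)$. You leave it open, and your candidates for it are partly wrong.

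First, there is no ambiguity about $\delta_X$. The side of $\Delta$ that cuts off $P_X$ runs from the endpoint of $i_1$ shared with $i_1+1$ to $w$; your phrase ``start vertex of $i_1$'' contradicts your own description of $P_X$. So its tuple is $\{i_1,i_1+1,i_2,i_3\}$. Moreover $i_1+1\in X$, because $\Delta$ is outer rather than border, so $X\neq\emptyset$. Hence this tuple always lies in $\U_X$.

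This rules out the option ``$T_X=P_X$ with $\delta_X$ relabelled''. That polygon is an $(|X|+2)$-gon with only $|X|-1$ diagonals, and it drops the tuple of $\delta_X$. The correct piece, as in the paper, is $T_X=P_X\cup\Delta$. It is an $(|X|+3)$-gon whose sides read clockwise $i_1$, the edges marked by $X$, $i_2$, and the former diagonal $\delta_Y$, now marked $i_3$. So all three markings $i_1,i_2,i_3$ occur, not ``one or two'', and the relabelled side lies along $\delta_Y$, not $\delta_X$.

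The check you actually need is that this relabelling changes no tuple. Only two vertices have their neighbouring sides changed:
\begin{itemize}
\item $w$, which still sits between $i_2$ and $i_3$;
\item the endpoint of $i_1$ shared with $i_1-1$, which now sits between $i_3$ and $i_1$ but is the endpoint of no diagonal of $T_X$.
\end{itemize}
Hence $\U(T_X)=\U_X$ and $|\U_X|=|X|$. Symmetrically, $T_Y=P_Y\cup\Delta$ with $\delta_X$ marked $i_2$.

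Your counting fallback does not replace this step. It only yields $|\U_X|=|X|$, which is what Lemma \ref{trop25} needs to avoid the zero case. But the statement asserts that the end products are triangulations $T_1,\dots,T_k$ without outer triangles, and your induction needs each piece to be an honest triangulation with fewer outer triangles.

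With the correct pieces, $\Delta$ becomes a border triangle in both $T_X$ and $T_Y$; it never ``disappears''. Every other triangle keeps its type. So the number of outer triangles drops by exactly one and the number of inner triangles is preserved, which the later $2^d$ count relies on.

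A minor slip: in your first bullet, the vertex between $i_2-1$ and $i_2$ also touches an edge outside $X$. The conclusion of that bullet is still correct.
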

\begin{proof}
	First, we fix an outer triangle of $T.$ With this partition of the markings into $X$ and $Y,$ all diagonals now only touch vertices adjacent to edges with markings all in $\{i_1,i_2,i_3\}\cup X$ or all in $\{i_1,i_2,i_3\} \cup Y,$ see Figure \ref{outer-tri}.
	Then, there exists an $k\in \{1,...,n-4\}$ such that we can decompose the $n$-gon into a $(n-k)$-gon with the edges corresponding to the markings in $\{i_1,i_2,i_3\}\cup X$ and an $(k+3)$-gon with the edges corresponding to the markings in $\{i_1,i_2,i_3\}\cup Y,$ see Figure \ref{outer-tri}. Thus, the conditions of Lemma \ref{bijection} and Lemma \ref{trop25} are fulfilled and we can apply them to this setting.
	
	Then, two tropical curves with multiplicities $m_X$ and $m_Y$ fulfilling the cross-ratio conditions of these smaller polygons are in bijection to a tropical curves with multiplicity $m=m_X\cdot m_Y$ that fulfills the cross-ratio conditions defined by $T.$
	
	The number of outer triangles of the $(n-k)$-gon plus the number of outer triangles of the $(k+3)$-gon is one less than the number of outer triangles of the $n$-gon, because the outer triangle along which we decomposed $T$ is now a border triangle in the triangulations of the $(n-k)$-gon and the $(k+3)$-gon. Also, the total number of inner triangles stays the same.  A sketch can be seen in Figure \ref{outer-tri}. 
	
	By repeatedly applying Lemma \ref{bijection}, we can obtain a set $\{T_1,...,T_k\}$ of triangulations of smaller polygons such that there are no outer triangles in any of these triangulations. As this is a bijection as described in the proof of Lemma \ref{bijection} and the multiplicity also is preserved, we obtain the demanded bijection between a set $\U(T)$ of markings of cross-ratios defined by $T$ and the union of several sets $\U(T_1),...,\U(T_k)$ of markings of cross-ratios defined by triangulations $T_1,...,T_k$ without outer triangles. If we now also apply Lemma \ref{trop25}, we see that for the degrees it holds that $d_{\U(T)}=d_{\U(T_1)}\cdot...\cdot d_{\U(T_k)}.$
	\begin{figure}[h]
		\centering
		\includegraphics[scale=0.4]{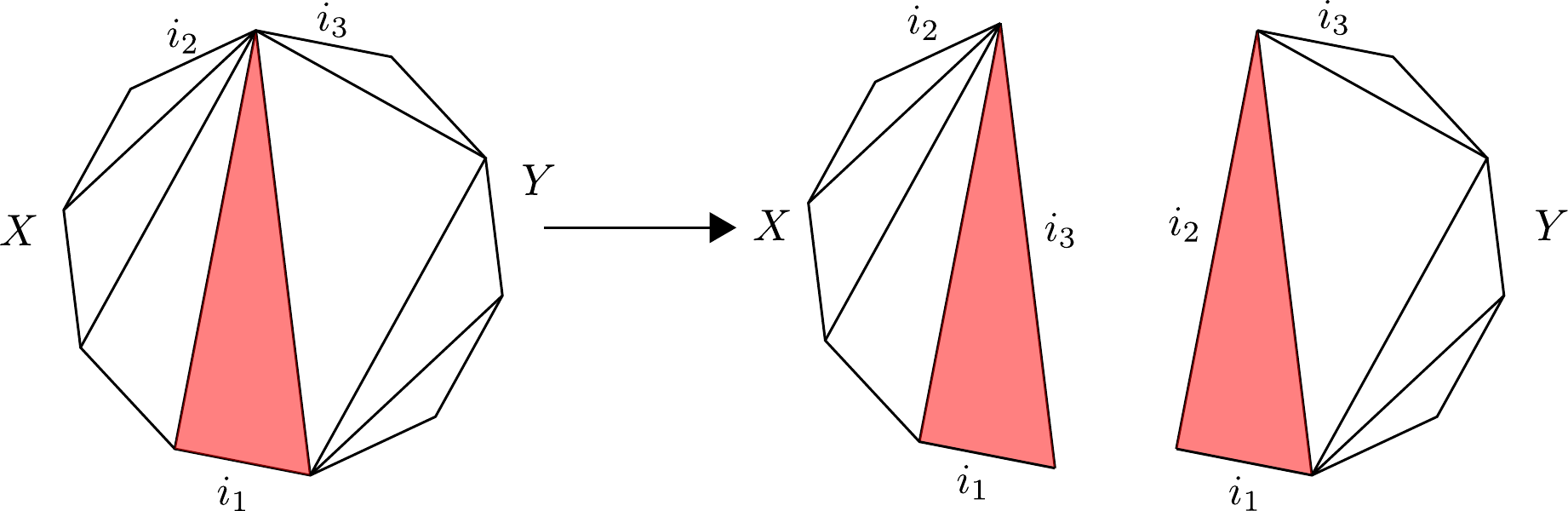}
		\caption{Splitting a triangulation at an outer triangle as described in Corollary \ref{trop26}}
		\label{outer-tri}
	\end{figure}
\end{proof}
\begin{corollary}[No inner triangles]
	If a set of cross-ratio conditions $C_1,...,C_{n-3}$ defined by a triangulation $T$ of an $n$-gon as in Definition \ref{defthreeint} has no inner triangles then there is just one tropical curve of multiplicity 1 in $\pi_{\U(T)}^{-1}(C_1,...,C_{n-3})$ and thus, $d_{\U(T)}=1.$
\end{corollary}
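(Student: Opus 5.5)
The plan is to use Corollary \ref{trop26} to reduce to triangulations with neither inner nor outer triangles, and then treat those small triangulations directly. If $T$ has no inner triangles, applying Corollary \ref{trop26} repeatedly, splitting along outer triangles, gives triangulations $T_1,\dots,T_k$ with no outer triangles. Inner triangles are preserved by the decomposition (the paper notes that their total number stays the same), so each $T_j$ has no inner triangles either.

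The first step is to show that a triangulation with neither inner nor outer triangles must be a triangulation of a quadrilateral. I would count triangles. A triangulation of an $m$-gon has $m-2$ triangles and $m$ polygon edges, and each triangle covers $0,1$ or $2$ of these edges. With only border triangles present, all $m-2$ triangles cover exactly two polygon edges, so $2(m-2)=m$ and hence $m=4$. The case $m=3$ (no diagonals, degree $1$ trivially) only arises as a degenerate endpoint and should be handled separately. Thus every $T_j$ is a quadrilateral with a single diagonal, giving one cross-ratio on four markings $\{a,b,c,d\}$.

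Next I would handle the quadrilateral. Here $n=4$, and $\pi_{\U(T_j)}$ is the identity map $\M_{0,4}^{\mathrm{trop}}\to\M_{0,4}^{\mathrm{trop}}$ (up to relabeling), because forgetting no markings does nothing. A cross-ratio $(\lambda,\cros{a}{b}{c}{d})$ in general position has $\lambda>0$. By Lemma \ref{crasft} its unique preimage is the $4$-marked curve with $a,b$ at one vertex and bounded edge of length $\lambda$, whichever of the three interpretations is chosen. The multiplicity matrix is the $1\times1$ matrix $(1)$, so the multiplicity is $1$ and $d_{\U(T_j)}=1$.

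Finally I would reassemble the pieces. By the bijection of Lemma \ref{bijection}, iterated as in Corollary \ref{trop26}, the preimage $\pi_{\U(T)}^{-1}(C_1,\dots,C_{n-3})$ is in bijection with the product of the preimages for the $T_j$. Each factor is a single curve, so the product is a single curve, and its multiplicity is the product of the multiplicities, namely $1$. This gives exactly one curve of multiplicity $1$ and $d_{\U(T)}=1$; the degree statement also follows from Lemma \ref{trop25}.

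The main obstacle is making sure the iterated splitting is legitimate. Each application of Lemma \ref{bijection} needs the cross-ratios of the pieces to be in general position, and that is inherited because the coordinates of the pieces form a subset of the original coordinates. Each application of Lemma \ref{trop25} needs the condition $\abs{\U_X}=\abs{X}$, which I would verify by counting diagonals of the subpolygon, namely $(\abs{X}+3)-3=\abs{X}$. I also need to check that the split produces two genuine polygons with at least four sides, so that $k$ lies in the stated range.
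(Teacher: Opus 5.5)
Your proposal is correct and follows the paper's own argument: split along outer triangles via Corollary \ref{trop26} until only quadrilaterals remain, note that each quadrilateral has a unique preimage of multiplicity $1$, and glue back using the bijection and multiplicativity of Lemma \ref{bijection}. The differences are only cosmetic. You show the pieces are quadrilaterals by counting on each piece ($2(m-2)=m$), where the paper first counts two border and $n-4$ outer triangles in $T$. You also make explicit some checks the paper leaves implicit: that the pieces have at least four sides, that $\abs{\U_X}=\abs{X}$, and that general position is inherited.
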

\begin{proof}
	If there are no inner triangles, all triangles must be outer triangles or border triangles. Because the $n$-gon has $n$ edges, there are $n-2$ triangles in total. As a border triangle is adjacent to two and an outer triangle is adjacent to one boundary edge of the polygon, there have to be two border triangles and $n-4$ outer triangles for $n\ge4$. We can now decompose the $n$-gon inductively into smaller polygons according to Corollary \ref{trop26} such that we end up with $n-3$ quadrilaterals and this process also does not create any new inner triangles. The quadrilaterals all correspond to unique tropical curves of multiplicity 1 each. As we know from Lemma \ref{bijection} that the multiplicity does not change by gluing these tropical curves together again, we obtain that the multiplicity of a tropical curve in $\pi_{\U(T)}^{-1}(C_1,...,C_{n-3})$ always is 1 and there is only one such tropical curve, so also the degree $d_{\U(T)}$ of the map $\pi_{\U(T)}$ is 1.
\end{proof}

\section{The Main Theorem}\label{2ndint}
In this section, we want to prove Theorem \ref{allcases}, which is also mentioned in the introduction as Theorem \ref{mainint}. We first consider the case, where all cross-ratios of a triangulation are interpreted in the neighboring way, as in Definition \ref{defthreeint} and prove the theorem for this case. Then, in section \ref{3rdint}, we consider all interpretations and prove the remaining part of the theorem using a case distinction for all for all possibilities of interpretations.
\subsection{The Neighboring Interpretation}
In this section, all cross-ratios are interpreted in the neighboring way.
\begin{definition}\label{deftriineq}
	Let $T$ be a triangulation of an $n$-gon as in Definition \ref{defthreeint} that defines cross-ratios $C_1,...,C_{n-3}.$ We say that three cross-ratios $C_i,$ $C_j$ and $C_k$ with lengths $\lambda_i,$ $\lambda_j$ and $\lambda_k$ fulfill the triangle inequalities if $\lambda_i+\lambda_j>\lambda_k$, $\lambda_i+\lambda_k>\lambda_j$ and $\lambda_j+\lambda_k>\lambda_i.$ Then, we also say that an inner triangle of $T$ that is adjacent to the diagonals defining $C_i,$ $C_j$ and $C_k$ fulfills the triangle inequalities.
\end{definition}

\begin{example}\label{ex6b}
	Consider the triangulation $T$ depicted in Figure \ref{ex6bim}. The three cross-ratios defined by $T$ in the neighboring interpretation are $C_1=(\lambda_1,\cros{1}{6}{2}{3})$, $C_2=(\lambda_2,\cros{2}{3}{4}{5})$ and $C_3=(\lambda_3,\cros{1}{6}{4}{5})$. If the three cross-ratios fulfill the triangle inequalities, then we obtain that there is one unique tropical curve in $\pi_{\U(T)}^{-1}(C_1,C_2,C_3)$ that looks like the one in the middle of Figure \ref{ex6bim} and with multiplicity 2, as the multiplicity matrix is $$\begin{pmatrix}
		1&1&0\\
		0&1&1\\
		1&0&1
	\end{pmatrix},$$
or a permutation of its rows and columns. If the triangle inequalities are not fulfilled, we can say that w.l.o.g the green cross-ratio $C_1=(\lambda_1,\cros{1}{6}{2}{3})$ has the largest length. Then we obtain that there are exactly two different tropical curves with multiplicity 1 each in $\pi_{\U(T)}^{-1}(C_1,C_2,C_3)$, see the right of Figure \ref{ex6bim}, as their multiplicity matrices are $$\begin{pmatrix}
	1&1&0\\
	1&0&1\\
	1&0&0
	\end{pmatrix},$$ or a permutation of it.
\end{example}
\begin{figure}
	\centering
	\includegraphics[scale=0.4]{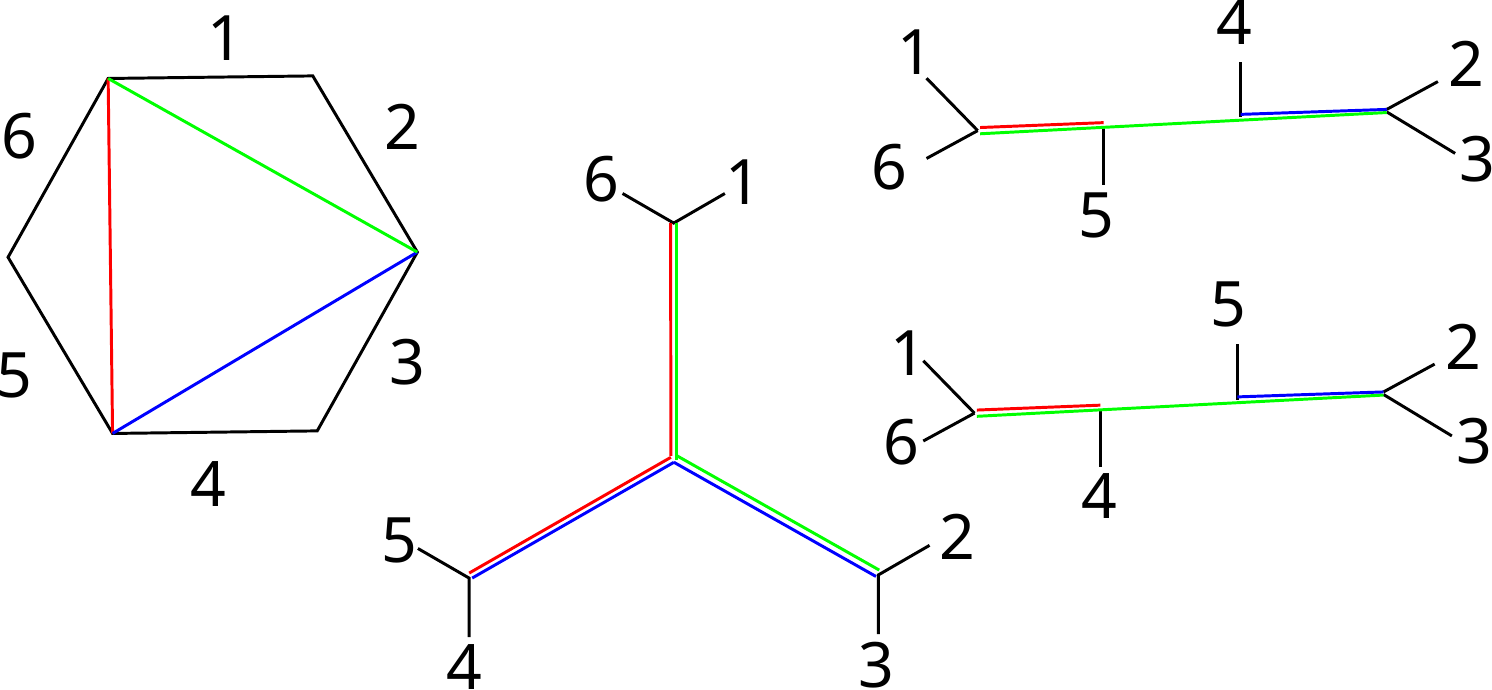} 
	\caption{The tropical curves that fulfill the cross-ratio conditions given by that triangulation of a hexagon in the neighboring interpretation, see Example \ref{ex6b}}
	\label{ex6bim}
\end{figure}

\begin{construction}\label{addingtri}
In the following three lemmas, we consider a triangulation $T$ without outer triangles and with diagonals of lengths $\lambda_3,...,\lambda_{n-1}$ of an $n$-gon. We now explain a way to add two new edges, diagonals and cross-ratios $C_1$ and $C_2$ with lengths $\lambda_1$ ans $\lambda_2$ to $T.$ Out of all the inner triangles of $T,$ we can choose one that is adjacent to a border triangle and define $v$ to be one of the vertices that is adjacent to the diagonal between these two triangles. W.l.o.g, we can say that this diagonal is the one with length $\lambda_3.$  If $n=4$ and thus, there are no inner triangles, we choose $v$ to be a vertex, where the unique diagonal (which then must have length $\lambda_3$) is ending. Now, we consider the border triangle (or for $n=4$ one of the border triangles) that is adjacent to this diagonal. We Then can construct a triangulated $(n+2)$-gon without outer triangles by defining the two other edges of this border triangle to be new diagonals that define cross-ratios $C_1$ and $C_2$ and adjoining a border triangle on the other side of both of them. This construction can be seen in Figure \ref{add-triangle}.
\begin{figure}[h]
	\centering
	\includegraphics[scale=0.5]{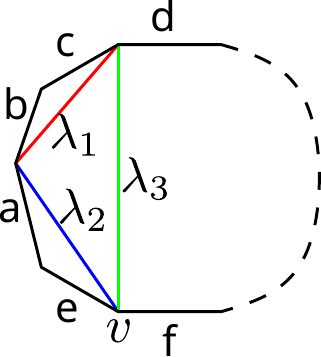}
	\caption{Adding a new inner triangle to a given triangulation as described in Construction \ref{addingtri}}
	\label{add-triangle}
\end{figure}
\end{construction}
\begin{lemma}\label{lemaddtri}
	Let $T$ be a triangulation of an $n$-gon without outer triangles as in Definition \ref{defthreeint}. Then $T$ can be constructed by applying Construction \ref{addingtri} to a quadrilateral $\frac{n}{2}-2$ times.
\end{lemma}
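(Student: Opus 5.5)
We argue by induction on $n$, reversing Construction \ref{addingtri}. First we record the counting facts. A triangulation of an $n$-gon has $n-2$ triangles. Without outer triangles, every triangle is either a border triangle (two polygon edges) or an inner triangle (none). Counting polygon edges gives $2b=n$, where $b$ is the number of border triangles, so $n$ is even, $b=n/2$, and there are $d=n/2-2$ inner triangles. Each application of Construction \ref{addingtri} raises $n$ by $2$, and starting from the quadrilateral ($n=4$, $d=0$) we need exactly $(n-4)/2=\frac{n}{2}-2$ steps to reach $n$. It therefore suffices to show: every triangulation $T$ of an $n$-gon with $n\ge 6$ and no outer triangles arises from Construction \ref{addingtri} applied to some triangulation $T'$ of an $(n-2)$-gon with no outer triangles.

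For the inductive step we look at the dual tree of $T$, whose vertices are the triangles and whose edges are the diagonals. Border triangles have dual degree $1$ and are the leaves. Inner triangles have dual degree $3$, and since there are no outer triangles, no triangle has degree $2$. For $n\ge 6$ there is at least one inner triangle. Remove all leaves; the inner triangles then form a nonempty tree. Pick a leaf of that tree, or the unique vertex if it is a single vertex. This inner triangle $\Delta$ has at least two border-triangle neighbours $B_1,B_2$, attached along two of its sides, and these two sides meet in a vertex $w$ of $\Delta$.

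Now delete $B_1$ and $B_2$ and turn the two sides of $\Delta$ through $w$ into polygon edges. The result $T'$ is a triangulation of an $(n-2)$-gon. In $T'$ the triangle $\Delta$ has two polygon edges, so it is a border triangle. Every other triangle keeps its type, so $T'$ has no outer triangles. Its third side, the diagonal $\delta$ opposite $w$, separates $\Delta$ from either an inner triangle of $T'$ or, when $n-2=4$, the other border triangle.

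The remaining step, which I expect to be the main obstacle, is to check that Construction \ref{addingtri} applied to $T'$ with a suitable choice reproduces $T$. The construction picks an inner triangle adjacent to a border triangle, a diagonal between them, and a vertex $v$ on it. It then turns the other two edges of the border triangle into diagonals and glues new border triangles onto them. Taking $\Delta$ as the border triangle and $\delta$ as the diagonal, these two edges are exactly the sides of $\Delta$ through $w$, and gluing new border triangles there gives back $B_1,B_2$. Up to relabelling of the polygon edges (the markings are assigned clockwise afterwards), this is $T$. The care needed here is to match the construction's conventions: the choice of $v$ is irrelevant to the resulting shape, and when $n-2=4$ the quadrilateral case applies. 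With that in place, induction from the quadrilateral completes the proof.
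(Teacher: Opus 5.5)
Your proposal is correct and follows essentially the same route as the paper. Both argue by induction on $n$: locate an inner triangle adjacent to two border triangles, remove those two border triangles to obtain a triangulation of an $(n-2)$-gon without outer triangles, and observe that this removal is inverse to Construction \ref{addingtri}. Your dual-tree argument for the existence of such an inner triangle, and your check that the diagonal $\delta$ borders an inner triangle of $T'$ when $n-2\ge 6$, make explicit what the paper justifies only briefly (``border triangles cannot share a diagonal for $n>4$'').
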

\begin{proof}
		We prove the statement by induction. This trivially true for $n=4.$ Now, we assume the statement holds for $n-2$ and we consider a triangulation $T$ of an $n$-gon without outer triangles. As $T$ has $n-2$ triangles in total, a border triangle is adjacent to two edges of the $n$-gon and an inner triangle to none, $n$ must be even and there must be $\frac{n}{2}$ border triangles and $\frac{n}{2}-2$ inner triangles. As border triangles cannot share a diagonal for $n>4,$ there must be an inner triangle that is adjacent to two border triangles. By removing these two border triangles, we obtain a triangulation of an $n-2$-gon without outer triangles for which the statement holds. As removing the two border triangles is inverse to Construction \ref{addingtri}, we also obtain the statement for $n.$
\end{proof}
\begin{remark}
	In Construction \ref{addingtri}, we also can rename the edges so that $C_1=(\lambda_1,(ab|cd)),$ $C_2=(\lambda_2,(ab|ef))$ and $C_3=(\lambda_3,(cd|ef))$ are these three cross-ratios of the described new inner triangle. In the following Figures \ref{Im21}, \ref{Im22} and \ref{Im23}, $C_1$ will be depicted in red, $C_2$ in blue and $C_3$ in green.
	
	Now, we describe what happens to a tropical curve in $\pi_{\U(T)}^{-1}(C_3,...,C_{n-3})$ when we add these new triangles. 
\end{remark}
\begin{theorem}\label{thm2}
	Let $T$ be a triangulation of an $n$-gon with $d$ inner triangles, where all cross-ratios $C_1,...,C_{n-3}$ are interpreted in the neighboring way as in Definition \ref{defthreeint}. Let $k\in\{1...,d\},$ be the number of inner triangles that fulfill the triangle inequalities, see Definition \ref{deftriineq}, then there are $2^{d-k}$ tropical curves of multiplicity $2^{k}$ each in $\pi_{\U(T)}^{-1}(C_1,...,C_{n-3})$ and thus, $d_{\U(T)}=2^d,$ see Definition \ref{mappi}.
\end{theorem}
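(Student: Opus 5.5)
Note first that $k$ should range over $\{0,\dots,d\}$; I prove the statement in that form. The plan is induction on $d$ along Construction \ref{addingtri}. By Corollary \ref{trop26} and Lemma \ref{bijection}, tropical curves and multiplicities split multiplicatively across outer triangles, and no inner triangles are created or destroyed. So I may assume $T$ has no outer triangles, $n$ is even, and $d=\frac{n}{2}-2$. By Lemma \ref{lemaddtri}, $T$ arises from a quadrilateral by $d$ applications of Construction \ref{addingtri}. The base case $n=4$ is immediate: one neighboring cross-ratio gives the unique $4$-marked curve with bounded edge of length $\lambda$ and multiplicity $1$. For the induction step, let $T'$ be the triangulation of the $(n-2)$-gon before the last step. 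The new inner triangle has cross-ratios $C_1=(\lambda_1,(ab|cd))$, $C_2=(\lambda_2,(ab|ef))$ and $C_3=(\lambda_3,(cd|ef))$. In $T'$, the diagonal of $C_3$ bounds a border triangle, and the pairs $\{a,b\}$ and $\{e,f\}$ were single markings $x$ and $y$ of $T'$. Replacing $a,b$ by $x$ and $e,f$ by $y$ is compatible with all the other cross-ratios of $T'$.

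The key local claim describes how each curve $\Gamma'\in\pi_{\U(T')}^{-1}(C_3,\dots)$ extends. I pass from $\Gamma'$ to curves for $T$ by attaching a cherry $\{a,b\}$ in place of end $x$ and a cherry $\{e,f\}$ in place of end $y$, possibly sliding the attachment points. The three neighboring conditions say $a,b$ adjacent, $c,d$ adjacent and $e,f$ adjacent in the respective $4$-point forgetful images. Let $\alpha$ and $\beta$ be the lengths of the new bounded edges above the cherries, and let $\gamma$ be the path length between them and the $\{c,d\}$ part. There are then two geometric situations. In the first, the three parts meet in a tripod: I solve $\alpha+\gamma_1=\lambda_1$, $\alpha+\gamma_2=\lambda_2$, $\gamma_1+\gamma_2=\lambda_3$, where $\gamma_1,\gamma_2$ are the legs toward $c,d$. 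This has a unique positive solution exactly when the triangle inequalities hold, and its determinant block is the $2$ of Example \ref{ex6b}. In the second, the tree has a path $\{a,b\}$–$\{e,f\}$–$\{c,d\}$ or $\{e,f\}$–$\{a,b\}$–$\{c,d\}$ when the length opposite the middle is largest. Here $\lambda_1=\lambda_2+\lambda_3$ fails generically, so the configurations come out as in the right of Figure \ref{ex6bim}: two curves of multiplicity $1$.

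The step I expect to be hardest is making this rigorous inside a large curve. The $\{c,d\}$ side in $\Gamma'$ is not a cherry but a whole subtree, and the length $\lambda_3$ is already realized by edges of $\Gamma'$ shared with other cross-ratios. The plan is to mimic Example \ref{ex6b} by working with the forgetful image to the $6$ markings $a,b,c,d,e,f$. I will show that the remaining conditions fix the part of $\Gamma$ away from $\{a,b,e,f\}$, exactly as in Lemma \ref{bijection}, where the contributions of shared path edges cancel after column operations. I will then check the multiplicity matrix of $\Gamma$ by column operations. Subtracting columns along the path to $v$ separates a block equal to the multiplicity matrix of $\Gamma'$ from a $2\times2$ block, up to a rank-one correction. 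That block has determinant $\pm2$ in the tripod case and $\pm1$ in the path case. I would first try to show the correction vanishes by choosing the new edges as coordinates, in the same way as in the proof of Lemma \ref{bijection}.

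Granting the local claim, the count is straightforward. Each curve for $T'$ yields either one curve for $T$ with multiplicity doubled, if the new triangle satisfies the triangle inequalities, or two curves with unchanged multiplicity, otherwise. By induction there are $2^{d-1-k'}$ curves of multiplicity $2^{k'}$ for $T'$. This becomes $2^{d-k}$ curves of multiplicity $2^k$, where $k$ is $k'$ or $k'+1$ according to the new triangle. Summing gives $d_{\U(T)}=2^{d-k}\cdot2^k=2^d$. Since all triangles are treated symmetrically, the order of construction does not matter. General position of the lengths, as in Remark \ref{genpos}, ensures that all curves are trivalent and no equalities occur.
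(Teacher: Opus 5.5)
You have the paper's architecture: reduce via Corollary \ref{trop26}, build $T$ by Construction \ref{addingtri}, analyse each new inner triangle with the hexagon curves of Example \ref{ex6b}, and compute multiplicities by column operations. You are also right that $k=0$ must be allowed. There is a genuine gap, though. The inductive step, which is the entire content of the proof (the paper's Lemmas \ref{case21}--\ref{case23}), is only granted, and the set-up you give for it is wrong. Removing the two new border triangles deletes only the markings $a$ and $b$. The two old boundary edges of the border triangle at $C_3$ inherit the labels of the new boundary edges adjacent to the endpoints of the $C_3$-diagonal, namely one of $c,d$ and one of $e,f$. So the old cross-ratio is already $C_3=(\lambda_3,(cd|ef))$, and $c,d,e,f$ are old ends already positioned in $\Gamma'$ by the other conditions. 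Your identification of $\{a,b\}$ with an old end $x$ and of $\{e,f\}$ with an old end $y$ would turn $C_3$ into the degenerate $(cd|yy)$. Hence the asserted compatibility fails, and ``replacing $y$ by a cherry $\{e,f\}$'' is not the right operation. For instance, when $\lambda_1>\lambda_2+\lambda_3$ the split of $e$ and $f$ is already present in $\Gamma'$. The two solutions arise from grafting the $\{a,b\}$-cherry onto the branch toward $e$ or the branch toward $f$.

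What you need are two facts. First, no cross-ratio other than $C_1,C_2$ involves $a$ or $b$, so $ft_{\{a,b\}}$ sends every solution for $T$ to a solution for $T'$. You therefore only have to count the ways of adding $a,b$ to a fixed $\Gamma'$. These are dictated by the $6$-marked image, i.e.\ by the one or two hexagon curves of Example \ref{ex6b}. Each of these glues along the $C_3$-path in exactly one way, because in general position the new attachment points lie in the interiors of edges of $\Gamma'$. Second, there is no ``rank-one correction'' in the multiplicity. At each attachment point, subtract the column of the adjacent edge on the other side (e.g.\ $e_1$ and $e_2-e_3$ in Lemma \ref{case21}). The two resulting columns are supported only in the rows $C_1,C_2$, so the matrix is block triangular. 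Its determinant is $\det B\cdot\det M_{\Gamma'}$, with $B=\begin{pmatrix}1&1\\1&-1\end{pmatrix}$ in the tripod case and $\det B=\pm1$ otherwise. The remaining columns, restricted to the old rows, reproduce the multiplicity matrix of $\Gamma'$, since the two pieces of a subdivided old edge agree on every old cross-ratio. Without these two facts, your final count $2^{d-k}\cdot 2^k$ rests on an unproven claim.
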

To prove this theorem, we use induction on the number of inner triangles and use the following three Lemmas. 
In these, a triangulation of an $n$-gon without outer triangles is considered. For any such triangulation it holds that $n$ must be even and there are $\frac{n}{2}-2$ inner triangles.
\begin{lemma}\label{case21}
	Let $T$ be a triangulation of an $n$-gon without outer triangles that defines the cross-ratios $C_3,...,C_{n-1},$ as in Definition \ref{defthreeint}. We interpret all cross-ratios in the neighboring way, and assume there are $2^{n/2-k-2}$ tropical curves of multiplicity $2^k$ each, for $k$ in $\{1,...,\frac{n}{2}-2\},$ in $\pi_{\U(T)}^{-1}(C_3,...,C_{n-1}),$  see Definition \ref{mappi}. By adding another inner triangle at $C_3$, as in Construction \ref{addingtri}, with the new cross-ratios $C_1$ and $C_2$ also interpreted in the neighboring way, that fulfills the triangle inequalities, we obtain a triangulation $T'$ of an $(n+2)$-gon for which it holds that there are $2^{n/2-k-2}$ tropical curves of multiplicity $2^{k+1}$ each in $\pi_{\U(T')}^{-1}(C_1,...,C_{n-1}).$
\end{lemma}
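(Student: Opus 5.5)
Fix a tropical curve $\Gamma\in\pi_{\U(T)}^{-1}(C_3,\dots,C_{n-1})$ of multiplicity $2^k$. I will show that it extends to exactly one tropical curve $\Gamma'\in\pi_{\U(T')}^{-1}(C_1,\dots,C_{n-1})$, and that this curve has multiplicity $2^{k+1}$. Conversely, every curve in the new preimage should restrict to one in the old preimage, namely by forgetting the four new ends. Together with the hypothesis on the old preimage, this gives the claimed count of $2^{n/2-k-2}$ curves of multiplicity $2^{k+1}$ each.

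\textbf{Relabelling.} In Construction \ref{addingtri}, the border triangle adjacent to the diagonal of $C_3$ has two edges of the old polygon, labelled $e$ and $f$. These are the ones appearing in $C_3=(\lambda_3,\cros{c}{d}{e}{f})$. In $T'$, each of them is replaced by two new edges, $a,b$ and $c',d'$. I rename so that $C_1=(\lambda_1,\cros{a}{b}{c}{d})$, $C_2=(\lambda_2,\cros{a}{b}{e}{f})$ and $C_3=(\lambda_3,\cros{c}{d}{e}{f})$, as in the remark preceding Theorem \ref{thm2}. With this labelling, the new markings are $a,b$, and the pairs $c,d$ and $e,f$ are the ones shared with $C_3$. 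Also, in the old triangulation the markings of the two new ends play the role of edges that appear only in $C_3$ among the old cross-ratios.

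\textbf{Reconstructing $\Gamma'$.} Forgetting $a$ and $b$ (or more precisely the relevant new ends) from $\Gamma'$ gives a curve satisfying the old conditions. Conversely, the new ends must be attached so that $C_1$ and $C_2$ hold. The neighboring interpretation forces $a,b$ to sit on a cherry, i.e.\ to share a vertex. The path from this cherry must meet the $C_3$-path, which joins the cherry-vertices of $\{c,d\}$ and $\{e,f\}$ and has length $\lambda_3$, at a point $p$. Then
\[
\lambda_1 = \ell + x,\qquad \lambda_2 = \ell + (\lambda_3 - x),
\]
where $x$ is the distance from $p$ to the $\{c,d\}$ side and $\ell$ is the length of the new edge from $p$ to the cherry of $a,b$. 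Solving gives
\[
x=\tfrac{\lambda_1-\lambda_2+\lambda_3}{2},\qquad \ell=\tfrac{\lambda_1+\lambda_2-\lambda_3}{2}.
\]
The triangle inequalities are exactly the conditions $0<x<\lambda_3$ and $\ell>0$, so the attachment point exists and is unique.

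\textbf{Main obstacle.} The delicate point is to check that attaching in the interior of the $C_3$-path cannot break the old conditions. The old cross-ratios must not depend on where along the $C_3$ path the new branch sits. In the old triangulation, the pair $e,f$ (or its replacement) occurs only in $C_3$, so inserting a branch changes no old path lengths. One must also verify that no other attachment location is possible. For example, the branch cannot attach off the $C_3$-path: if it did, $\lambda_1+\lambda_2-\lambda_3$ would equal twice a distance in an impossible configuration, contradicting general position or the strict inequality. I would handle this with a short case analysis on where the branch leaves the path.

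\textbf{Multiplicity.} The new curve has two new bounded edges: the branch edge, and the split of one old edge of the $C_3$-path into two. In the multiplicity matrix, I take the columns for the two halves and the branch, and the rows $C_1,C_2,C_3$. This produces the block $\begin{pmatrix}1&1&0\\0&1&1\\1&0&1\end{pmatrix}$ pattern, together with the old rows, in which the two halves always appear together. Subtracting one half-column from the other and expanding reduces the determinant to $2$ times the old determinant, up to sign. This gives multiplicity $2\cdot 2^k=2^{k+1}$.
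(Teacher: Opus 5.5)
Your overall route is the paper's. For each old curve $\Gamma$ you attach the $a,b$-branch at one point of the $C_3$-path; your formulas $x=\frac{\lambda_1-\lambda_2+\lambda_3}{2}$, $\ell=\frac{\lambda_1+\lambda_2-\lambda_3}{2}$ make explicit the paper's gluing of the unique $6$-marked curve $\Gamma_3$ of Example \ref{ex6b} onto $\Gamma$. Your multiplicity step is also the paper's and is correct. After subtracting the two half-columns, the branch column and the difference column are supported only on the rows $C_1,C_2$, so Laplace expansion gives $\lvert\det\begin{pmatrix}1&1\\1&-1\end{pmatrix}\rvert\cdot\lvert\det M_{\mathrm{old}}\rvert$. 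Your $3\times 3$ pattern is not literally a block, since rows $C_1,C_2,C_3$ have further entries in old columns, but this does not matter.

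One justification is false: the pair $e,f$ (or the old border edges) does not occur only in $C_3$. The two markings next to a vertex at which diagonals end occur in the cross-ratio of \emph{every} diagonal ending there. In Example \ref{ex6b}, the pair $2,3$ occurs in both $(16|23)$ and $(23|45)$. Two smaller slips: the two old border edges sit at opposite ends of the $C_3$-diagonal, so one lies in $\{c,d\}$ and the other in $\{e,f\}$; and you forget two ends, $a$ and $b$, not four. The step you used the false claim for is still true, for the simpler reason that $a,b$ occur in none of $C_3,\dots,C_{n-1}$. Forgetting them returns $\Gamma$ with the split edge re-merged, so no old path length changes, and for the same reason the half-columns agree on all old rows.

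\textbf{The real gap: uniqueness of the extension.} This is exactly where the triangle inequalities enter, and you only announce it. Your starting claim that the neighboring interpretation forces $a,b$ onto a cherry is false in general. When $\lambda_3>\lambda_1+\lambda_2$ (Lemma \ref{case22}), $a$ and $b$ sit at two different vertices of the $C_3$-path, yet $C_1,C_2$ hold in the neighboring interpretation. Your sketched mechanism (``twice a distance'') also does not identify what excludes the other configurations.

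Let $H\subset\Gamma'$ be the subtree spanned by $c,d,e,f$. The argument needs three cases:
\begin{enumerate}
\item[(i)] If the paths from $a$ and $b$ reach $H$ at different points, then $(ab|cd)$ and $(ab|ef)$ force both points into the interior of the $C_3$-path. This gives $\lambda_1+\lambda_2=\lambda_3-(\text{their distance})<\lambda_3$, a contradiction.
\item[(ii)] Suppose the paths merge at $q$ and hit $H$ at a common point $p$, with $\ell=d(q,p)$. If $p$ lies on a leg at distance $\delta>0$ from the $cd$-vertex, then $\lambda_1=\ell$ and $\lambda_2=\ell+\delta+\lambda_3>\lambda_1+\lambda_3$; the $e,f$-legs are symmetric. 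So $p$ is on the $C_3$-path at your distance $x$.
\item[(iii)] If the component of $\Gamma'\setminus\{p\}$ containing $a,b$ contained any other end, $p$ would be a vertex of $\Gamma$. The positions of such vertices on the $C_3$-paths of the finitely many old curves are fixed by $\lambda_3,\dots,\lambda_{n-1}$, while $x$ moves with $\lambda_1-\lambda_2$, so general position excludes this.
\end{enumerate}
Only after (iii) do you know that $a,b$ form a cherry in $\Gamma'$ joined to $p$ by a single edge. Both your existence and uniqueness count and your multiplicity matrix assume this. Alternatively, cite the uniqueness of $\Gamma_3$ from Example \ref{ex6b} for the $6$-marked image, as the paper does, and then argue (iii).
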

\begin{proof}
	Let $\U_3=\{S_1,S_2,S_3\}$ be the set of the markings of the cross-ratios $C_1,$ $C_2$ and $C_3.$ Fulfilling the three cross-ratios $C_1,$ $C_2$ and $C_3$ implies that in $\pi_{\U_3}^{-1}(C_1,C_2,C_3)$ there is just one tropical curve $\Gamma_3$ and it looks like described in Example \ref{ex6b}. Also, if we fix an $n$-marked tropical curve $\Gamma_n$ in $\pi_{\U(T)}^{-1}(C_3,...,C_{n-1}),$ there is just one way of gluing $\Gamma_3$ and $\Gamma_n$ together so that all conditions are still fulfilled, which means that the resulting tropical curve $\Gamma$ is in $\pi_{\U(T)}^{-1}(C_1,...,C_{n-1}).$
	This is because both tropical curves $\Gamma_3$ and $\Gamma_n$ share a common cross-ratio $C_3$ with its markings and thus, the edges that contribute to $C_3$ and the paths to the markings of $C_3$ have to be identified. As $C_3$ is fulfilled in both tropical curves, the lengths of these edges add up to the same number $\lambda_3,$ and so, they can be glued on each other. In this process, some of the edges of $\Gamma_3$ and $\Gamma_n$ can get subdivided. We now analyze how this glued tropical curve $\Gamma$ in $\pi_{\U(T')}^{-1}(C_1,...,C_{n-1})$ has to look like and what happens to its multiplicity.
	
	 As the ends $a$ and $b$ do not appear in $C_3,...,C_{n-1},$ there is a bounded edge $e_1$ of the underlying graph of $\Gamma$ adjacent to these two ends  that only contributes to the two cross-ratios $C_1$ and $C_2,$ see sketch in Figure \ref{Im21}. This implies that in the corresponding column of $e_1$ in the multiplicity matrix, in the rows corresponding to $C_1$ and $C_2$, the entry is 1 and 0 in the rest of this column.
	\begin{figure}
		\centering
		\includegraphics[scale=0.5]{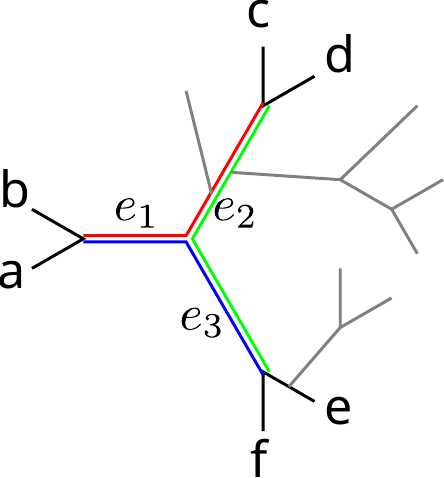}
		\caption{A tropical curve with edges corresponding to an inner triangle that fulfills the triangle inequalities, see Lemma \ref{case21}}
		\label{Im21}
	\end{figure}
	
	Furthermore, the vertex, where this edge $e_1$ ends, has two other adjacent edges, we call them $e_2$ and $e_3$, see Figure \ref{Im21}. As all other cross-ratios $C_3,...,C_{n-1}$ use other markings than $a$ and $b,$ either both or none of $e_2$ and $e_3$ contribute to them. However, one of these two edges, w.l.o.g. let us say it is $e_2,$ contributes to $C_1$ and the other one to $C_2$, which leads to the two columns of $e_2$ and $e_3$ differing in only the rows of $C_1$ and $C_2$. So, when we subtract the column $e_3$ from the column $e_2$, we end up with a block diagonal matrix of the form 
	
	$$\begin{blockarray}{cccccc}
		e_1& e_2-e_3 &  &  &  \\
		\begin{block}{(ccccc)c}
			1 & 1 & * & \dots& * & C_1 \\
			1 & -1 & * & \dots& * & C_2 \\
			0 & 0 & * & \dots& * &  \\
			\vdots & \vdots & \vdots & \ddots& \vdots  &  \\
			0 & 0 & * & \dots& *  &  \\
		\end{block}
	\end{blockarray}.$$
	
	 The $(2\times 2)$-block on the left upper side has determinant $ \pm 2$. The block on the right lower side is identical to the $n\times n$ multiplicity matrix of $\pi_{\U(T)}$  of the tropical curve with the ends $a$ and $b$, and thus also the cross-ratios $C_1$ and $C_2$, removed, because then the edge $e_1$ just is removed and the edge $e_2$ and $e_3$ merge. 
	 
	Thus, we obtain that there is only this one possibility to obtain a tropical curve with the ends $a$ and $b$ added that lives in $\pi_{\U(T)}^{-1}(C_3,...,C_{n-1})$ from an $n$-marked tropical curve in $\pi_{\U(T)}^{-1}(C_3,...,C_{n-1})$ by adding the ends $a$ and $b,$ and the multiplicity of it increases by a factor of 2.
\end{proof}
\begin{lemma} \label{case22}
	 Let $T$ be a triangulation of an $n$-gon without outer triangles that defines the cross-ratios $C_3,...,C_{n-1},$ as in Definition \ref{defthreeint}. We interpret all cross-ratios in the neighboring way, and assume there are $2^{n/2-k-2}$ tropical curves of multiplicity $2^k$ each, for $k$ in $\{1,...,\frac{n}{2}-2\},$ in $\pi_{\U(T)}^{-1}(C_3,...,C_{n-1}),$ see Definition \ref{mappi}. By adding another inner triangle at $C_3$, as in Construction \ref{addingtri}, with the new cross-ratios $C_1$ and $C_2$ with lengths $\lambda_1$ and $\lambda_2$ also interpreted in the neighboring way, such that $\lambda_3>\lambda_1+\lambda_2$, we obtain a triangulation $T'$ of an $(n+2)$-gon for which it holds that there are $2^{n/2-k-1}$ tropical curves of multiplicity $2^{k}$ each in $\pi_{\U(T')}^{-1}(C_1,...,C_{n-1}).$
\end{lemma}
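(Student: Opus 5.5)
Following the proof of Lemma \ref{case21}, I first solve the three-cross-ratio problem for $C_1=(\lambda_1,\cros{a}{b}{c}{d})$, $C_2=(\lambda_2,\cros{a}{b}{e}{f})$, $C_3=(\lambda_3,\cros{c}{d}{e}{f})$ on the six markings $a,\dots,f$. By Example \ref{ex6b}, when $\lambda_3>\lambda_1+\lambda_2$ there are exactly two $6$-marked curves $\Gamma_3,\Gamma_3'$, each of multiplicity $1$. In both, $a,b$ form a cherry at a vertex $w$, $c,d$ form a cherry and $e,f$ form a cherry. In the first curve the path from the $\{e,f\}$-cherry toward the $\{c,d\}$-cherry passes $w$; in the second the roles of $\{c,d\}$ and $\{e,f\}$ are swapped. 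The $C_3$-path has length $\lambda_3$ and is split by the attachment point of the $a,b$ branch into pieces $\lambda_1$ and $\lambda_3-\lambda_1$, resp.\ $\lambda_3-\lambda_2$ and $\lambda_2$. The cherry edge of $a,b$ has length $\lambda_2$, resp.\ $\lambda_1$. I will check these two lengths explicitly from the three linear equations.

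Next, fix $\Gamma_n\in\pi_{\U(T)}^{-1}(C_3,\dots,C_{n-1})$. The markings $a,b$ appear only in $C_1,C_2$. So, as in Lemma \ref{case21}, every $\Gamma\in\pi_{\U(T')}^{-1}(C_1,\dots,C_{n-1})$ maps under forgetting $a,b$ to some $\Gamma_n$. Conversely, $\Gamma$ is obtained by attaching a cherry $\{a,b\}$ at a point of the $C_3$-path in $\Gamma_n$ (the common part of the paths $c\to e$ and $d\to f$, of length $\lambda_3$). The position is at distance $\lambda_1$ from the $c,d$ side with cherry length $\lambda_2$, or at distance $\lambda_2$ from the $e,f$ side with cherry length $\lambda_1$. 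I must argue three things: the cherry must sit on this path, the two choices are exactly the two solutions found above, and both positions are admissible because $\lambda_1,\lambda_2<\lambda_3$. The attachment point may fall in the interior of an edge of $\Gamma_n$ and subdivide it. General position (Remark \ref{genpos}) excludes landing on a vertex. Hence each $\Gamma_n$ yields exactly two curves $\Gamma$, which gives $2\cdot 2^{n/2-k-2}=2^{n/2-k-1}$ curves.

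For the multiplicities I repeat the column manipulation of Lemma \ref{case21}. The cherry edge $e_1$ contributes only to $C_1$ and $C_2$. The two edges $e_2,e_3$ into which the attachment point subdivides an edge of $\Gamma_n$ agree on all rows except $C_1$ and $C_2$. Subtracting one of these columns from the other gives a block-triangular matrix whose lower block is the multiplicity matrix of $\Gamma_n$, hence of determinant $\pm2^k$. The $2\times2$ block has columns $e_1=(1,1)^T$ and $e_2-e_3$. In this configuration exactly one of $e_2,e_3$ contributes to exactly one of $C_1,C_2$: since the cherry lies on the $c$--$e$ side, only one side contributes to $C_1$ and neither affects $C_2$ beyond $e_1$. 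So $e_2-e_3=(\pm1,0)^T$, and the determinant of the block is $\pm1$. This gives multiplicity $2^k$.

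The main obstacle is the second step: showing rigorously that the cherry cannot attach off the $C_3$-path, and that the determinant block is $\pm1$ rather than $\pm2$. I would handle this by examining which edges contribute to $C_1$ and $C_2$ in each case, using the inequality $\lambda_3>\lambda_1+\lambda_2$ to rule out the configuration of Lemma \ref{case21}.
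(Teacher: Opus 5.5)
Your skeleton matches the paper's: forget $a,b$ to land in $\pi_{\U(T)}^{-1}(C_3,\dots,C_{n-1})$, reinsert $a,b$ on the $C_3$-path in two ways, and column-reduce to a block whose complement is the multiplicity matrix of $\Gamma_n$. But the local $6$-marked picture you insert is wrong, and both your length bookkeeping and your determinant step fail because of it.

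When $\lambda_3>\lambda_1+\lambda_2$, the ends $a$ and $b$ do \emph{not} form a cherry. Suppose $\{a,b\}$, $\{c,d\}$ and $\{e,f\}$ are all cherries in a trivalent $6$-marked tree. Then the $\{a,b\}$-branch, of length $h$, meets the $C_3$-path at an interior point at distance $x$ from the $\{c,d\}$-end. This gives $\lambda_1=h+x$ and $\lambda_2=h+\lambda_3-x$, so $h=(\lambda_1+\lambda_2-\lambda_3)/2<0$. That configuration exists only under the triangle inequalities; it is the curve of Lemma~\ref{case21}. Your explicit lengths are also inconsistent: a cherry edge of length $\lambda_2$ attached at distance $\lambda_1$ from the $\{c,d\}$-end gives $C_1$ the value $\lambda_1+\lambda_2$.

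The two actual solutions are caterpillars with cherries $\{c,d\}$ and $\{e,f\}$ at the two ends. The ends $a$ and $b$ are attached \emph{separately} to the $C_3$-path: one at distance $\lambda_1$ from the $\{c,d\}$-end, the other at distance $\lambda_2$ from the $\{e,f\}$-end. The middle segment has length $\lambda_3-\lambda_1-\lambda_2>0$. The two solutions differ by which of $a,b$ is nearer to $\{c,d\}$. So the admissibility condition is $\lambda_1+\lambda_2<\lambda_3$, not merely $\lambda_1,\lambda_2<\lambda_3$.

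The determinant step breaks for the same reason. In your cherry picture, the piece $e_2$ of the path toward $\{c,d\}$ contributes to $C_1$ but not to $C_2$. The piece $e_3$ toward $\{e,f\}$ contributes to $C_2$ but not to $C_1$. Hence $e_2-e_3$ is $(1,-1)^T$ in the rows $C_1,C_2$, and the $2\times2$ block has determinant $\pm2$, exactly as in Lemma~\ref{case21}. Your claim $e_2-e_3=(\pm1,0)^T$ is false, which is why you could not see how to get $\pm1$.

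With the correct picture the computation is immediate, and it is what the paper does. Say $a$ is the end nearer to $\{c,d\}$. Its vertex carries two bounded edges: $e_1$ toward $\{c,d\}$, contributing to $C_1$, and $e_3$ toward $b$. The vertex of $b$ carries $e_2$ toward $\{e,f\}$, contributing to $C_2$, and $e_4$ toward $a$; possibly $e_3=e_4$. No other cross-ratio uses $a$ or $b$, and both members of each pair lie on the $C_3$-path. Therefore $e_1-e_3$ and $e_2-e_4$ are the unit vectors in the rows $C_1$ and $C_2$, and the block is the identity. The complementary block is the multiplicity matrix of $\Gamma_n$, obtained by merging $e_1,e_3$ and $e_2,e_4$. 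So each $\Gamma_n$ yields two curves, each of multiplicity $2^k$.
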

\begin{proof}
	Let $\U_3=\{S_1,S_2,S_3\}$ be the set of the markings of the cross-ratios $C_1,$ $C_2$ and $C_3.$ Fulfilling the three cross-ratios $C_1,$ $C_2$ and $C_3$ implies that in $\pi_{\U_3}^{-1}(C_1,C_2,C_3)$ there are exactly two tropical curves and they look like described in Example \ref{ex6b}. We choose one of them and call it $\Gamma_3.$ The rest of the proof can be done analogously when choosing the other one.
	Also, if we fix an $n$-marked tropical curve $\Gamma_n$ in $\pi_{\U(T)}^{-1}(C_3,...,C_{n-1}),$ there is just one way of gluing $\Gamma_3$ and $\Gamma_n$ together so that all conditions are still fulfilled, which means that the resulting tropical curve $\Gamma$ is in $\pi_{\U(T)}^{-1}(C_1,...,C_{n-1}).$
	This is because both tropical curves $\Gamma_3$ and $\Gamma_n$ share a common cross-ratio $C_3$ with its markings and thus, the edges that contribute to $C_3$ and the paths to the markings of $C_3$ have to be identified. As $C_3$ is fulfilled in both tropical curves, the lengths of these edges add up to the same number $\lambda_3,$ and so, they can be glued on each other. In this process, some of the edges of $\Gamma_3$ and $\Gamma_n$ can get subdivided. We now analyze how this glued tropical curve $\Gamma$ in $\pi_{\U(T')}^{-1}(C_1,...,C_{n-1})$ has to look like and what happens to its multiplicity.
	
	As $\lambda_3$ is the largest length, in both cases, the vertex adjacent to the end $a$ of $\Gamma$ is adjacent to two bounded edges (and no other ends) and the same holds for the end $b$ because they are not adjacent to other ends under the image of the map $ft_{[n]\setminus \{a,b,c,d,e,f\}}$. We call these edges $e_1$, $e_2$, $e_3$ and $e_4$, where $e_1$ is contributing to $C_1$ and $e_2$ is contributing to $C_2$, $e_3$ and the end $a$ share a vertex and the same holds for $e_4$ and $b$, see Figure \ref{Im22}. It can happen that $e_3$ and $e_4$ coincide but this does not affect the rest of the proof. A sketch of this can be found in Figure \ref{Im22}, where the gray edges are an example of possible further edges in the tropical curve.
	
	\begin{figure}
		\centering
		\includegraphics[scale=0.5]{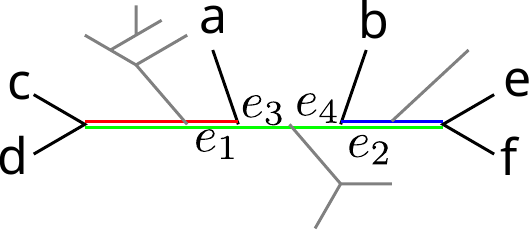}
		\caption{A tropical curve where $\lambda_3$ (in green) is larger than the sum of $\lambda_1$ (in red) and $\lambda_2$ (in blue), see Lemma \ref{case22}}
		\label{Im22}
	\end{figure}
	
	 As there are no other cross-ratios where the markings $a$ and $b$ appear, for all other cross-ratios $C_3,..,C_{n-1}$ the edges $e_1$ and $e_3$ either both or none of them contribute to them and the same holds for the edges $e_2$ and $e_4.$ Thus, the columns of $e_1$ and $e_3$ and also of $e_2$ and $e_4$ in the multiplicity matrix only differ in one entry which is the one in the row of $C_1$ or $C_2$, respectively, see Figure \ref{Im22}. So, if we subtract column $e_3$ from column $e_1$ and column $e_4$ from column $e_2$, the columns $e_1$ and $e_2$ both only have one entry, which is a 1 in the row corresponding to $C_1$ or $C_2$, respectively. Now, this matrix has a block diagonal form, where the upper left block has determinant 1, and the block on the right lower side is identical to the multiplicity matrix of the tropical curve with the ends $a$ and $b$, and thus also the cross-ratios $C_1$ and $C_2$ removed, because then the edges $e_1$ and $e_3$ as well as the edges $e_2$ and $e_4$ merge. 
	 
	 $$\begin{blockarray}{cccccc}
	 	e_1-e_3& e_2-e_4 &  &  &  \\
	 	\begin{block}{(ccccc)c}
	 		1 & 0 & * & \dots& * & C_1 \\
	 		0 & 1 & * & \dots& * & C_2 \\
	 		0 & 0 & * & \dots& * &  \\
	 		\vdots & \vdots & \vdots & \ddots& \vdots  &  \\
	 		0 & 0 & * & \dots& *  &  \\
	 	\end{block}
	 \end{blockarray}$$
	 
	Thus, we obtain that for the two possibilities of how the 6-marked tropical curve in the image of the map $ft_{[n]\setminus \{a,b,c,d,e,f\}}$ can look like, there is only this one possibility to obtain an $(n+2)$-marked tropical curve that is in $\pi_{\U(T')}^{-1}(C_1,...,C_{n-1})$ from an $n$-marked tropical curve in $\pi_{\U(T)}^{-1}(C_3,...,C_{n-1})$ by adding the ends $a$ and $b$, and the multiplicities of them are the same.
\end{proof}
\begin{lemma}	\label{case23}
	Let $T$ be a triangulation of an $n$-gon without outer triangles that defines the cross-ratios $C_3,...,C_{n-1},$ as in Definition \ref{defthreeint}. We interpret all cross-ratios in the neighboring way, and assume there are $2^{n/2-k-2}$ tropical curves of multiplicity $2^k$ each, for $k$ in $\{1,...,\frac{n}{2}-2\},$ in $\pi_{\U(T)}^{-1}(C_3,...,C_{n-1}),$ see Definition \ref{mappi}. By adding another inner triangle at $C_3$, as in Construction \ref{addingtri}, with the new cross-ratios $C_1$ and $C_2$ with lengths $\lambda_1$ and $\lambda_2$ also interpreted in the neighboring way, such that $\lambda_1>\lambda_2+\lambda_3$, we obtain a triangulation $T'$ of an $(n+2)$-gon for which it holds that there are $2^{n/2-k-1}$ tropical curves of multiplicity $2^{k}$ each in $\pi_{\U(T')}^{-1}(C_1,...,C_{n-1}).$
\end{lemma}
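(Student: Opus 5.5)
The plan follows the proofs of Lemma \ref{case21} and Lemma \ref{case22}: first a local analysis of the $6$-marked problem for $\U_3=\{S_1,S_2,S_3\}$, then a gluing argument with a fixed $\Gamma_n\in\pi_{\U(T)}^{-1}(C_3,\dots,C_{n-1})$, then a column reduction of the multiplicity matrix. With the renaming $C_1=(\lambda_1,\cros{a}{b}{c}{d})$, $C_2=(\lambda_2,\cros{a}{b}{e}{f})$, $C_3=(\lambda_3,\cros{c}{d}{e}{f})$ and $\lambda_1>\lambda_2+\lambda_3$, I expect $\pi_{\U_3}^{-1}(C_1,C_2,C_3)$ to consist of exactly two trivalent caterpillars, each of multiplicity $1$, as in the non-triangle-inequality case of Example \ref{ex6b}.

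\textbf{The local $6$-marked problem.} The two caterpillars should have the shape $\{a,b\}\,|\,e\,|\,f\,|\,\{c,d\}$ and $\{a,b\}\,|\,f\,|\,e\,|\,\{c,d\}$, with bounded edges $x,y,z$ from the $ab$-cherry towards the $cd$-cherry. The conditions read $x+y+z=\lambda_1$, $x=\lambda_2$ and $z=\lambda_3$, which forces $y=\lambda_1-\lambda_2-\lambda_3>0$. So positivity of $y$ is exactly our hypothesis. The multiplicity matrix is unimodular, so both curves have multiplicity $1$. I would verify that no other combinatorial type is possible by running through the three trivalent $6$-marked types in which $a,b$ form a cherry and $c,d$ form a cherry, and discarding any type that would force $\lambda_1\le\lambda_2+\lambda_3$ or a non-positive edge length. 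One caveat is that the multiplicity matrix in Example \ref{ex6b} does not obviously match this $x,y,z$ edge assignment, so the exact combinatorial types still have to be confirmed.

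\textbf{Gluing.} Fix one of the two $6$-marked curves, $\Gamma_3$, and a curve $\Gamma_n$. As in the previous lemmas, the two curves share $C_3$: the edges contributing to $C_3$ and the paths to $c,d,e,f$ must be identified, and the contributing lengths both sum to $\lambda_3$. I would argue that this gives exactly one glued curve $\Gamma\in\pi_{\U(T')}^{-1}(C_1,\dots,C_{n-1})$. The argument is that the marks $a,b$ occur in no cross-ratio other than $C_1$ and $C_2$, so the $ab$-cherry together with its edge of length $\lambda_2$ is attached at a unique point of $\Gamma_n$. That point is determined by the length condition on the path from $ef$ towards $cd$ and by $C_1$. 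Conversely, applying $ft_{\{a,b\}}$ to any $\Gamma$ in the preimage returns such a pair $(\Gamma_3,\Gamma_n)$; this is the bijection argument of Lemma \ref{bijection} adapted to a shared cross-ratio. Since there are $2$ choices of $\Gamma_3$ and $2^{n/2-k-2}$ choices of $\Gamma_n$, we obtain $2\cdot 2^{n/2-k-2}=2^{n/2-k-1}$ curves.

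\textbf{Multiplicity.} The aim is to show that the multiplicity of the glued curve equals that of $\Gamma_n$. In $\Gamma$, the edges introduced or subdivided by attaching $a$ and $b$ come in pairs of the same kind as in Lemma \ref{case22}. Within each pair, both edges either contribute or fail to contribute to each of $C_3,\dots,C_{n-1}$, since $a,b$ appear nowhere else. Subtracting one column of a pair from the other therefore leaves a column supported only in the rows of $C_1$ and $C_2$. The goal is a block diagonal form whose upper-left $2\times2$ block is unimodular, for instance $\begin{pmatrix}1&1\\0&1\end{pmatrix}$ arising from the cherry edge (contributing to $C_1$ and $C_2$) and the middle edge $y$ (contributing to $C_1$ only). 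The lower-right block should be the multiplicity matrix of $ft_{\{a,b\}}(\Gamma)=\Gamma_n$. Then $\mult\Gamma=2^k$.

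\textbf{Expected main obstacle.} The hardest step is the precise identification of the edges in the last step. Further ends of $\Gamma_n$ may be attached along the segment between the $ab$-cherry and the point where $e$ branches off, and along the segment of length $y$. This could turn a naive pair of columns into two equal columns, which would wrongly suggest determinant $0$. I would therefore carry out the column operations starting from the $ab$-cherry and moving outward, as in the proof of Lemma \ref{bijection}. At each step I would pair an edge with its successor on the same path, and check that $C_1$ and $C_2$ separate exactly at the branch point of $\{e,f\}$. This should produce the unimodular $2\times2$ block.
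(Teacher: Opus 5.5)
Your proposal is correct and has the same architecture as the paper's proof: the two caterpillars $ab|e|f|cd$ and $ab|f|e|cd$, unique gluing along $C_3$, and a column reduction to a block-triangular multiplicity matrix with a unimodular $2\times2$ block. The difference is in the last step, and there your version is the more careful one.

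The paper avoids column operations by asserting that all other ends of $\Gamma$ lie between $f$ and $c,d$. From this it concludes that the vertex of $e$ is adjacent to the cherry edge $e_1$ and to an edge $e_2$ contributing only to $C_1$. Your worry shows why this is too optimistic. Take $\Gamma_3=ab|e|f|cd$; then the cherry is attached at the point $Q$ at distance $y=\lambda_1-\lambda_2-\lambda_3$ from the branch point $P$ of $f$, on the path towards $e$. Suppose the inner triangle across $C_3$ has cross-ratios $(cd|gh)$ and $(ef|gh)$, and the length of $(cd|gh)$ exceeds the sum of the other two. Then $\Gamma_n$ may restrict to $cd|f|e|gh$ with the $gh$-branch farther than $y$ from $P$. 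In that case the vertex of $e$ is not on $e_1$, and the edge from $Q$ towards $P$ also contributes to $(cd|gh)$.

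The reduction is simpler than a stepwise outward sweep, though. By general position, $Q$ is an interior point of a single edge $E$ of $\Gamma_n$.
If $E$ is bounded, its two halves have the entries of $E$ in the rows $C_3,\dots,C_{n-1}$ and differ only in row $C_1$.
If $E$ is an end, the new bounded half already contributes only to $C_1$.

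Either way, one subtraction gives the columns $(1,1,0,\dots,0)^T$ and $(1,0,0,\dots,0)^T$. This is the block $\begin{pmatrix}1&1\\1&0\end{pmatrix}$ in rows $C_1,C_2$ (not $\begin{pmatrix}1&1\\0&1\end{pmatrix}$, though both are unimodular), with zeros below it and the multiplicity matrix of $\Gamma_n$ as the complementary block.

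In the local step, also discard the caterpillars in which $a,b$ or $c,d$ is not a cherry. These force $\lambda_3>\lambda_1+\lambda_2$ or $\lambda_2>\lambda_1+\lambda_3$, so they are excluded by your hypothesis.
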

\begin{proof}
	Let $\U_3=\{S_1,S_2,S_3\}$ be the set of the markings of the cross-ratios $C_1,$ $C_2$ and $C_3.$ Fulfilling the three cross-ratios $C_1,$ $C_2$ and $C_3$ implies that in $\pi_{\U_3}^{-1}(C_1,C_2,C_3)$ there are exactly two tropical curves and they look like described in Example \ref{ex6b}.We choose one of them and call it $\Gamma_3.$ The rest of the proof can be done analogously when choosing the other one.
	Also, if we fix an $n$-marked tropical curve $\Gamma_n$ in $\pi_{\U(T)}^{-1}(C_3,...,C_{n-1}),$ there is just one way of gluing $\Gamma_3$ and $\Gamma_n$ together so that all conditions are still fulfilled, which means that the resulting tropical curve $\Gamma$ is in $\pi_{\U(T)}^{-1}(C_1,...,C_{n-1}).$
	This is because both tropical curves $\Gamma_3$ and $\Gamma_n$ share a common cross-ratio $C_3$ with its markings and thus, the edges that contribute to $C_3$ and the paths to the markings of $C_3$ have to be identified. As $C_3$ is fulfilled in both tropical curves, the lengths of these edges add up to the same number $\lambda_3,$ and so, they can be glued on each other. In this process, some of the edges of $\Gamma_3$ and $\Gamma_n$ can get subdivided. We now analyze how this glued tropical curve $\Gamma$ in $\pi_{\U(T')}^{-1}(C_1,...,C_{n-1})$ has to look like and what happens to its multiplicity.

	W.l.o.g. we say that $\lambda_1$ (and not $\lambda_2$) is the largest length of these cross-ratios, so $\lambda_1>\lambda_2+\lambda_3.$ In both cases of which tropical curve $\Gamma_3$ was chosen, there is a vertex adjacent to both ends $a$ and $b$ in $\Gamma$, which also is adjacent to an bounded edge $e_1$ that only contributes to the two cross-ratios $C_1$ and $C_2,$ see Figure \ref{Im23}, because for $\Gamma_3$ this is the case and no other cross-ratios than $C_1$ and $C_2$ use the markings $a$ and $b$. So, the corresponding column of $e_1$ in the multiplicity matrix has the entry 1 in the rows corresponding to $C_1$ and $C_2$ and zeroes elsewhere.
	
	Depending on which of the two possibilities of choosing $\Gamma_3$ we are looking at, either the end $e$ or the end $f$ is nearer to the ends $a$ and $b$. W.l.o.g., we say it is $e$. All other ends of $\Gamma$ must be somewhere in between $f$ and $c$ or $d$, because again the other cross-ratios do not use the markings $a$ and $b$, which can also be seen inductively by first considering the triangle on the other side of the diagonal with length $\lambda_3$. Thus, the vertex adjacent to $e$ is adjacent to the edge $e_1$ and another bounded edge, we call it $e_2.$ The edge $e_2$ is only contributing to $C_1,$ because no other cross-ratios use the markings $a$ and $b$ and all that use the marking $e$ also use the marking $f$ (as we are in the neighboring case). A sketch of this can be found in Figure \ref{Im23}, where the gray edges are an example of possible further edges in the tropical curve.
	
	\begin{figure}
		\centering
		\includegraphics[scale=0.5]{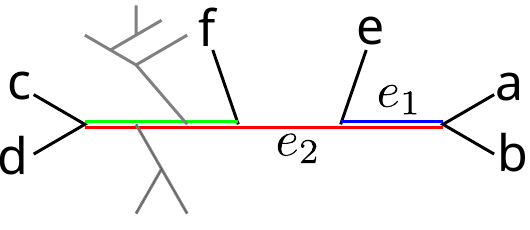}
		\caption{A tropical curve where $\lambda_1$>$\lambda_2$+$\lambda_3$, see Lemma \ref{case23}}
		\label{Im23}
	\end{figure}
	
	$$\begin{blockarray}{cccccc}
		e_1& e_2 &  &  &  \\
		\begin{block}{(ccccc)c}
			1 & 1 & * & \dots& * & C_1 \\
			1 & 0 & * & \dots& * & C_2 \\
			0 & 0 & * & \dots& * &  \\
			\vdots & \vdots & \vdots & \ddots& \vdots  &  \\
			0 & 0 & * & \dots& *  &  \\
		\end{block}
	\end{blockarray}$$
	
	So, the column in the multiplicity matrix corresponding to $e_2$ has a 1 in the row corresponding to $C_1$ and zeroes elsewhere. Hence, the multiplicity matrix of $\Gamma$ with respect to $C_1,...,C_{n-1}$ has block diagonal form, where the upper left block has determinant -1 and the lower right block is identical to the multiplicity matrix of the tropical curve with the ends $a$ and $b$, and thus also the cross-ratios $C_1$ and $C_2,$ removed, because when removing those ends, the edge $e_2$ gets contracted and the edge $e_1$ gets removed.
	
	Thus, we obtain that for the two possibilities of how the 6-marked tropical curve in the image of the map $ft_{[n]\setminus \{a,b,c,d,e,f\}}$ can look like, there is only this one possibility to obtain an $(n+2)$-marked tropical curve that lives in $\pi_{\U(T')}^{-1}(C_1,...,C_{n-1})$ from an $n$-marked tropical curve in $\pi_{\U(T)}^{-1}(C_3,...,C_{n-1})$ by adding the ends $a$ and $b$, and the multiplicities of them are the same.	
\end{proof}

\begin{proof}[Proof of Theorem \ref{thm2}]
	
	Corollary \ref{trop26} does not use any of these three interpretations of cross-ratios so we can also apply it in this case. 
	
	For any triangulation of an $n$-gon, we can split this $n$-gon up into smaller polygons as described in the proof of Corollary \ref{trop26} until there are no outer triangles left any more. Then, we can use the Lemmas \ref{case21}, \ref{case22} and \ref{case23} to find the number of tropical curves and their multiplicities.
\end{proof}
\begin{remark}[Splitting up]
	Let $T$ be a triangulation of an $n$-gon as in Definition \ref{defthreeint}, but where the lengths of the diagonals are not determined yet. Then, for any choice of which triangles should fulfill the triangle inequalities and which not, we can find fitting cross-ratios lengths as also mentioned in Remark \ref{genpos}. This means that for a fixed triangulation with $d$ inner triangles where all cross-ratios are interpreted in the neighboring way, see Definition \ref{defthreeint}, when varying the lengths of the cross-ratios, we can obtain from 1 up to $2^d$ different tropical curves in $\pi_{\U(T)}^{-1}(C_1,...,C_{n-3}).$
	
	 The combinatorics between the number of tropical curves depending on the lengths of the cross-ratios can be viewed as the the graph of a $d$-dimensional cube, also called boolean graph, as follows. The $2^d$ vertices of the graph correspond to the $2^d$ corners of the unit cube and vertices are connected if the coordinates of the corresponding corners only differ in a single entry. Each inner triangle represents a coordinate and has the entry 1 if the triangle inequalities are fulfilled and 0 if not. Thus, the vertex corresponding to the origin represents a set of $2^d$ different tropical curves of multiplicity 1 each and the vertex corresponding to the opposite corner represents a single tropical curve of multiplicity $2^d.$ When varying the lengths of one or more cross-ratios, we change the number of inner triangles that fulfill the inequalities, so we wander around in this graph. 
\end{remark}

\subsection{A Complete List of Cases}\label{3rdint}
Now, we consider all interpretations of Definition \ref{defthreeint} in order to prove the rest of Theorem \ref{allcases}, mentioned as Theorem \ref{mainint} in the introduction. We observe how to construct all tropical curves fulfilling cross-ratio conditions defined by a triangulation in any interpretation.
\begin{example}\label{ex6c}
	We consider the triangulation of a hexagon depicted in Figure \ref{ex6cim} with the three cross-ratios $C_1=(\lambda_1,(26|13))$, $C_2=(\lambda_2,(24|35))$ and $C_3=(\lambda_3,(46|15)).$ Here, we find that no matter how $\lambda_1$, $\lambda_2$ and $\lambda_3$ are sorted, there are always two tropical curves of multiplicity 1 that fulfill these conditions. For the case that $\lambda_2$ is the smallest out of these three lengths, the two possible tropical curves are depicted in Figure \ref{ex6cim}. The other cases look similar.
	\begin{figure}
		\centering
		\includegraphics[scale=0.4]{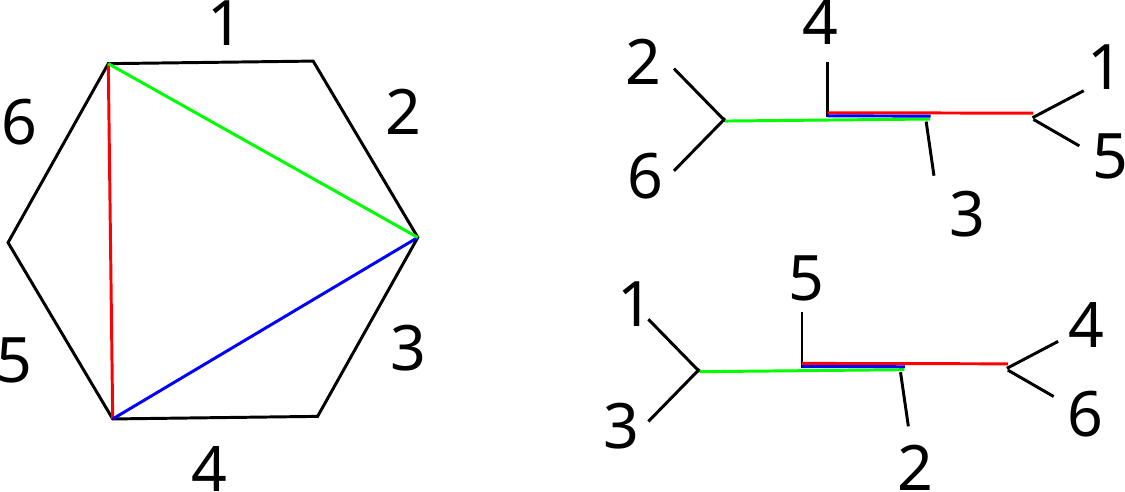}
		\caption{The two graphs that fulfill the cross-ratio conditions given by that triangulation of a hexagon in the intersecting interpretation, see Example \ref{ex6c}}
		\label{ex6cim}
	\end{figure}
\end{example}
\begin{theorem}\label{allcases}
	Let $T$ be a triangulation of an $n$-gon with $d$ inner triangles and $n-3$ diagonals that define cross-ratio conditions $C_1,...,C_{n-3}$ in general position and fix an interpretation for each cross-ratio as in Definition \ref{defthreeint}. Let $k$ be the number of inner triangles, where all three cross-ratios are interpreted in the neighboring way and that fulfill the triangle inequalities, see Definition \ref{deftriineq}. Then there are $2^{d-k}$ tropical curves of different combinatorial types and multiplicity $2^k$ each in $\pi_{\U(T)}^{-1}(C_1,...,C_{n-3})$, which means that the degree $d_{\U(T)}$ of the map $\pi_{\U(T)}$ of Definition \ref{mappi} is $2^d.$
\end{theorem}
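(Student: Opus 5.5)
The proof follows the same pattern as that of Theorem \ref{thm2}: reduce to triangulations without outer triangles, then induct on the number of inner triangles. First, Corollary \ref{trop26} never uses the interpretation of the cross-ratios. So, with Lemma \ref{bijection}, we split $T$ at its outer triangles into triangulations $T_1,\dots,T_r$ without outer triangles. Tropical curves in $\pi_{\U(T)}^{-1}(C_1,\dots,C_{n-3})$ correspond bijectively to tuples of curves for the $T_i$, and multiplicities multiply. Inner triangles, their interpretations and their lengths are distributed among the $T_i$ unchanged. Hence the numbers $d$ and $k$ are additive, and it suffices to prove the statement for a triangulation without outer triangles. The claim that the $2^{d-k}$ curves have pairwise different combinatorial types also survives the gluing, since the combinatorial type of $\Gamma$ determines those of the pieces $\Gamma_X,\Gamma_Y$ and conversely.

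For $T$ without outer triangles, we use Lemma \ref{lemaddtri} and induct along Construction \ref{addingtri}. The base case is the quadrilateral: one cross-ratio in any interpretation is fulfilled by a unique curve of multiplicity $1$. In the induction step, a new inner triangle with cross-ratios $C_1,C_2$ is attached at the diagonal of $C_3$, giving new ends $a,b$. As in Lemmas \ref{case21}--\ref{case23}, we first classify the $6$-marked curves in $\pi_{\U_3}^{-1}(C_1,C_2,C_3)$ for the triangle alone. For every fixed $\Gamma_n$ in the preimage for $T$, we then show there is exactly one way to glue such a $6$-marked curve onto $\Gamma_n$ along $C_3$. Finally, we compute the new multiplicity by column operations that bring the multiplicity matrix into block form: a $2\times 2$ block times the multiplicity matrix of $\Gamma_n$. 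The key local statement is then the following. If all three cross-ratios of the new triangle are neighboring and satisfy the triangle inequalities, there is one local solution, and the $2\times2$ block has determinant $\pm2$. In every other case, there are exactly two local solutions of different combinatorial types, each with block determinant $\pm1$. This gives $2^{d-k}$ curves of multiplicity $2^k$, and in all cases the degree is multiplied by $2$, so $d_{\U(T)}=2^d$.

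One subtlety in the induction is that $C_3$ is shared with the old triangulation, and the interpretation of $C_3$ now matters. If $C_3$ is not neighboring, the edges carrying $C_3$ in $\Gamma_n$ are not simply a path between the pairs $\{c,d\}$ and $\{e,f\}$ as before. The gluing argument must therefore be phrased via Lemma \ref{crasft}: the new ends $a,b$ are attached within the region determined by $ft_{[n]\setminus\{c,d,e,f\}}$ of $\Gamma_n$. Uniqueness of the gluing comes from the fact that $a$ and $b$ appear only in $C_1,C_2$. The multiplicity reduction works as before, because columns of edges on the attaching path differ only in the rows of $C_1,C_2$.

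The main obstacle is the local case analysis. There are $3^3$ interpretation patterns for $(C_1,C_2,C_3)$, each to be combined with the orderings of $\lambda_1,\lambda_2,\lambda_3$ relative to the triangle inequalities, and many of them are related by symmetry. For each case we must list the curves in $\pi_{\U_3}^{-1}$ and compute the $2\times2$ block after the column subtraction. Example \ref{ex6c} shows what the all-intersecting case looks like. I would organize this as a table of cases, referring to the appendix table \ref{cases}. For every non-neighboring case, the aim is to show that the two local curves differ in which of $a,b$ (or $e,f$) lies closer to the attaching point, and that in each of them some new edge contributes to only one of $C_1,C_2$, which forces determinant $\pm1$.
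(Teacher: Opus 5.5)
Your proposal is correct and follows essentially the same route as the paper. It reduces to triangulations without outer triangles via Lemma~\ref{bijection}, Lemma~\ref{trop25} and Corollary~\ref{trop26}, builds $T$ by Construction~\ref{addingtri} using Lemma~\ref{lemaddtri}, and inducts on $d$ from the quadrilateral. The induction step is then settled by the case table of Appendix~\ref{cases} together with the column operations of Lemmas~\ref{case21}--\ref{case23}. These leave a $2\times 2$ block of determinant $\pm 2$ exactly in the all-neighboring case with triangle inequalities, and $\pm 1$ otherwise.

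One small inaccuracy: the ``and conversely'' in your remark on combinatorial types is not true in general. Gluing interleaves the vertices of $\Gamma_X$ and $\Gamma_Y$ along the shared paths in a length-dependent way. You do not need it, though, since distinct curves in a general fiber automatically lie in distinct cones: $\pi_{\U}$ is injective on every cone of nonzero multiplicity.
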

\begin{proof}
	Because of Lemma \ref{bijection}, Lemma \ref{trop25} and Corollary \ref{trop26}, we can assume that there are no outer triangles and because of Lemma \ref{lemaddtri} we know that $T$ can be constructed as in Construction \ref{addingtri}.

	The rest is proven by induction on $d$. For no inner triangle, so we just have a quadrilateral with a diagonal, there is just one possible tropical curve in the preimage of the map $\pi_{\U(T)}$, see Definition \ref{mappi} for each of the three interpretations. These three tropical curves are depicted in Figure \ref{m04}. The tropical curve where all four ends meet in a vertex cannot appear since we only consider cross-ratios in general position. Now, we analyze how many tropical curves fulfill all cross-ratio conditions, when we add an inner triangle, as in Construction \ref{addingtri}, so $d$ increases by 1. 

	For the induction step, we take any tropical curve in $\pi_{\U(T)}^{-1}(C_3,...,C_{n-1})$ and adjoin another inner triangle at $C_3$ to this $n$-gon to obtain a triangulation $T'$, as described before in Construction \ref{addingtri}. Let $a$ and $b$ be the new markings that we obtain when adjoining this new inner triangle and we call the edges next to them $c,d,e$ and $f$, as it can be seen in Figure \ref{add-triangle}, where the old cross-ratio $C_3$ consisting of the four markings $c,d,e,f$ is depicted in green, and the new cross-ratios $C_1$, with the markings $a,b,c,d$ and $C_2$, with the markings $a,b,e,f$ are depicted in red and blue. The interpretations for these three cross-ratios are not chosen yet.
	
	Now, we consider every case on how these three cross-ratios can be interpreted. All these cases can be found in Table \ref{table} of Appendix \ref{cases}, where in the first three columns, the interpretations of the three cross-ratios $C_3,$ $C_1$ and $C_2$ are given. The letters "d", "n" and "i" in the table stand for the dual, neighboring and intersecting interpretation. Because everything is done analogously if we change the interpretations of the cross-ratios $C_1$ and $C_2,$ always only one of these is considered. Then, there is a sketch of where these ends are added to (a part of) the tropical curve in $\pi_{\U(T)}^{-1}(C_3,...,C_{n-1})$ so that all three cross-ratios are fulfilled in the demanded interpretations. Of course, the tropical curve can also have more ends if there are already several inner triangles but we do not need to consider these as no other cross-ratios involve the markings $a$ and $b.$ So the sketch is reduced to the edges that contribute to the cross-ratio $C_3$ and the paths to the ends $c,d,e,f$. Several times, these sketches look differently depending on the order of these three cross-ratios lengths. In the sketch, we always see on which branches the new ends $a$ and $b$ have to be added - the exact position depends on the exact values of the lengths. There are no other ways of adding the new ends apart from the ones depicted. We look at one of these cases more closely.
	
	In the first case where all three cross-ratios are interpreted in the dual way, we have the cross-ratios $C_1=(\lambda_1,\cros{b}{c}{a}{d})$ $C_2=(\lambda_2,\cros{a}{e}{b}{f})$ and $C_3=(\lambda_3,\cros{c}{e}{d}{f})$. We already have a tropical curve that fulfills the cross-ratio $C_3$ and we consider the edges that contribute to it and the branches of $c,$ $d,$ $e,$ and $f.$  The new ends $a$ and $b$ must be added either on these branches or on these edges between them because they cannot be added on other edges as no other cross-ratios involve the markings $a$ and $b.$  The cross-ratios $C_1$ and $C_2$ now tell us that the end $a$ should be closer to the branches of $d$ and $e$ and the end $b$ should be closer to the branches of $c$ and $f$. If one of the ends was added at the edges contributing to the cross-ratio $C_3,$ this condition could not be fulfilled. The same holds if both ends were added at the same branch. So, they have to be added on different branches. If those branches were $c$ and $d,$ or $e$ and $f$, respectively, only one of the cross-ratios $C_1$ and $C_2$ could be fulfilled. If those branches were $c$ and $f$ or $d$ and $e$, none of these two cross-ratios could be fulfilled, so the only possibilities are to add the ends $a$ and $b$ at the branches $c$ and $e,$ or $d$ and $f,$ respectively and the cross-ratio conditions then can all be fulfilled. A sketch can be found in Figure \ref{case111} and also in the first line of Table \ref{table}. For the other cases, we argue similarly. Sometimes, we obtain different combinatorial types of tropical curves depending on the order of $\lambda_1,$ $\lambda_2$ and $\lambda_3.$
	
		\begin{figure}
		\centering
		\includegraphics[scale=0.4]{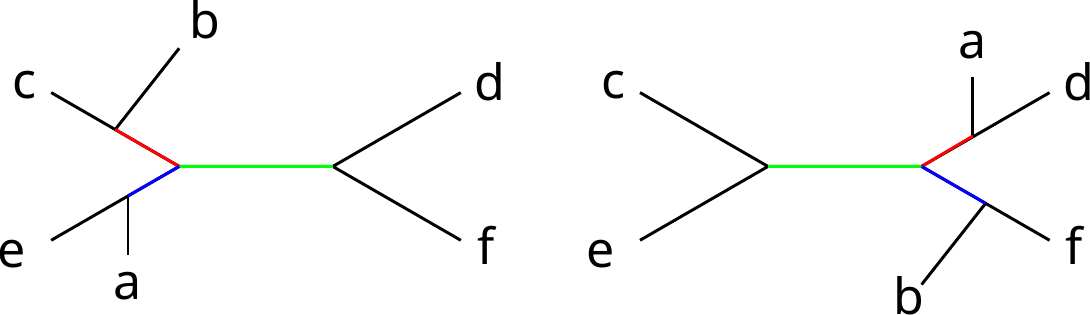}
		\caption{The first case in the proof of Theorem \ref{allcases}}
		\label{case111}
	\end{figure}
	
	 We see that if at least one cross-ratio of an inner triangle is not interpreted in the neighboring way, or all are interpreted in the neighboring way and do not fulfill the triangle inequalities, there are exactly two ways on how the ends $a$ and $b$ can be added to the existing tropical curve. If all cross-ratios are interpreted in the neighboring way and fulfill the triangle inequalities, there is just one way of how to add the new ends.
	 
	 Now, we consider the multiplicities in each case. We already saw in Lemma \ref{case21} that the multiplicity of the new tropical curve  with respect to $\pi_{\U(T')}$ is twice the multiplicity of the old tropical curve with respect to $\pi_{\U(T)},$ when we add an inner triangle with all cross-ratios interpreted the neighboring way that fulfills the triangle inequalities. We show that in every other case, the multiplicity stays the same by considering the multiplicity matrix.
	 
	Analogously to Lemma \ref{case22} and Lemma \ref{case23}, we can make column operations with the columns corresponding to the edges adjacent to the ends $a$ and $b$ or the one edge adjacent to both of these ends in some cases and obtain a matrix of block diagonal form, where the upper left block has determinant $\pm1$ and the lower right block is the multiplicity matrix of the tropical curve with the ends $a$ and $b$ removed, with respect to $\pi_{\U(T)}.$ Thus, the multiplicity does not change in all other cases.
	
	So, in total, when adjoining another inner triangle, we can construct the new tropical curves inductively and either the number of tropical curves fulfilling the corresponding cross-ratio conditions stays the same and the multiplicity increases by a factor of 2 for all already existing tropical curves with the new ends attached or the number of tropical curves doubles and the multiplicities stay the same. Hence, the degree $d_{\U(T)}$ of the map $\pi_{\U(T)}$ defined in Definition \ref{mappi} doubles when adding another inner triangle.
\end{proof}
\section{Maximal Cross-Ratio Degrees}\label{max}
\begin{remark}
	In the paper \cite{R24}, the open problem of finding the maximal cross-ratio degree for a certain number $n$ of markings, denoted $C(n)$ is introduced. There, this maximal degree is given for $n$ up to $n=6,$ and a lower bound for this maximal degree is given for all $n$ up to $n=14$. In \cite{Ma26}, it was recently shown that this lower bound for $n=7$ and $n=8$ is indeed sharp and all degrees from 0 up to 2 or up to 4, respectively, can appear. These new results are marked in red in the following table.
	
	\begin{center}
		\begin{tabular}{c||c|c|c|c|c|c|c|c|c|c|c|c}\label{maxtable}
			$n$&3&4&5&6&7&8&9&10&11&12&13&14\\\hline
			$C(n)$&1&1&1&2&\color{red}2&\color{red}4&\color{blue}6&\color{blue}10&$\ge$13&\color{OliveGreen}32&\color{violet}$\ge$32&\color{violet}$\ge$64
		\end{tabular}
	\end{center}
	The next Theorem provides a maximal degree for $n=9$ and $n=10,$ marked in blue in the preceding table.
\end{remark}
\begin{theorem}\label{max9}
	For a set of 6 cross-ratios $\{C_1,...,C_6\}$ using the markings $\U_9=\{S_1,...,S_6\}$, the map $\pi_{\U_9}$ from Definition \ref{mappi} can have every degree in $\{0,1,2,3,4,5,6\}$ and for a set of 7 cross-ratios $\{C_1,...,C_7\}$ using the markings $\U_{10}=\{S_1,...,S_7\},$ the map $\pi_{\U_{10}}$ can have every degree in $\{0,1,2,3,4,5,6,7,8,9,10\}.$
\end{theorem}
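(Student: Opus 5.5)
This is a finite but large verification, so I would prove it by exhaustive computation driven by Goldner's recursive algorithm \cite{Go21}, implemented in OSCAR \cite{OSCAR}. The statement has two parts. For $n=9$ and $n=10$, the degree never exceeds $6$, respectively $10$. Every value between $0$ and these maxima actually occurs. The second part only needs one explicit witness per value, and each witness is checked by the same program. So the real content is the upper bound, which requires enumerating all sets $\U$ of $n-3$ four-element subsets of $[n]$ up to symmetry.

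\textbf{Reducing the search space.} I would enumerate multisets of four-subsets modulo the action of $S_n$ on markings, using canonical forms of the associated hypergraphs. Several classes can then be discarded or reduced by earlier results.
\begin{itemize}
\item If two of the $S_i$ coincide, the map $\pi_\U$ cannot be injective on any maximal cone, so $d_\U=0$.
\item More generally, if some $k$ of the sets have union of size at most $k+2$, the overdetermination argument from the proof of Lemma \ref{trop25} gives $d_\U=0$.
\item If a marking lies in no $S_i$, then $\pi_\U$ factors through a forgetful map with positive-dimensional fibres, so again $d_\U=0$.
\item If a partition as in Lemma \ref{trop25} exists, then $d_\U=d_{\U_X}\cdot d_{\U_Y}$, and the degree is computed from known smaller values; by the table, $C(m)$ is known for $m\le 8$.
\end{itemize}
Only the "indecomposable" configurations need the full recursion.

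\textbf{Computing a single degree.} For each surviving $\U$, I would choose cross-ratio lengths in general position, for instance generic rational values, which is allowed by Remark \ref{genpos}. I would then run Goldner's recursion, which degenerates one length to infinity and splits the count into products of smaller counts at a contracted edge. Alternatively, one can enumerate all trivalent combinatorial types in $\M_{0,n}^{\mathrm{trop}}$: there are $135135$ types for $n=9$ and $2027025$ for $n=10$. For each type, one solves the linear system given by the multiplicity matrix of Remark \ref{mult} and checks positivity of the edge lengths and the correct splitting of each four-tuple. The matching curves are then summed with multiplicities $|\det|$. I would use this enumeration as an independent cross-check on a random sample of the recursion's outputs. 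The maximum over all classes yields $6$ and $10$.

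\textbf{Realizing every value.} I would record the set of degrees encountered during the enumeration; this already shows that every integer in $\{0,\dots,6\}$ and $\{0,\dots,10\}$ occurs. For the written proof, I would list one explicit $\U$ per value. Small values come from triangulations (Theorem \ref{allcases} gives $1,2,4,8$) and from decompositions via Lemma \ref{trop25}. The remaining values, such as $3,5,6,7,9,10$, come from explicit computed examples.

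\textbf{Main obstacle.} The hard step is the size of the enumeration for $n=10$, namely $7$ subsets from $\binom{10}{4}=210$ candidates, even after quotienting by $S_{10}$. I would first generate orbit representatives incrementally, adding one subset at a time with canonical augmentation, and prune early using the overdetermination criterion, since any partial family violating the Hall-type condition stays degree zero. If this is still too slow, I would restrict to families in which every marking appears in at least two sets: a marking used only once can be forgotten, reducing to an $(n-1)$-marked problem where the bound $C(n-1)$ is already known.
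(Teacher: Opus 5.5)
Your plan matches the paper's proof: an exhaustive computer search over all collections of $n-3$ four-element subsets of $[n]$ for $n=9,10$, computing each degree with Goldner's algorithm in OSCAR and recording which values occur. The differences are implementation refinements, and they are valid. You quotient by the full $S_n$-action and prune using the overdetermination criterion and Lemma \ref{trop25}; a marking used only once is the case $X=\{j\}$, with $C(9)$ from the first part feeding the $n=10$ bound. The paper instead only fixes $S_1=\{1,2,3,4\}$ and four intersection patterns for a second set, and leaves exploiting all symmetries as future work.
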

\begin{proof}
	The proof is done by computer using an program based on the algorithm of Goldner in \cite{Go21}, written in Oscar \cite{OSCAR}. The code is found in \cite{code}. 
	
	The function \texttt{deg\_by\_Goldner} takes a set $\U$ of four-element subsets of $[n]$ and computes its degree by the algorithm presented in \cite{Go21}. A part of the algorithm has been outsourced in the additional function \texttt{subsets\_checker}.  For faster calculation, a four-element subset $S$ of $[n]$ is presented as an integer, where in its bitwise representation, the $i$-th bit is a 1 if the number $i$ is contained in $S$ and 0 otherwise.
	
	The function \texttt{calculation} takes a value $n\in\mathbb{N}$ and returns a list of all possible cross-ratio degrees that can occur for cross-ratios using these $n$ markings. First, all four element subsets of $[n]$ are determined and put into a set  \texttt{set\_of\_four\_sets}. In the most efficient way, the next step would be to take all $n-3$ element subsets of \texttt{set\_of\_four\_sets} up to permuting the markings $\{1,...,n\}.$ Until now, we were not able to make use of all these symmetries. Instead, a first element \texttt{S1}$=\{1,2,3,4\}$ is fixed and four cases are now distinguished: we take one other set and have four possibilities on how this intersects \texttt{S1}. These sets are called \texttt{S2}$=\{1,2,3,n\},$ \texttt{S3}$=\{1,2,n-1,n\},$ \texttt{S4}$=\{1,n-2,n-1,n\}$ and \texttt{S5}$=\{n-3,n-2,n-1,n\}.$ So, the four cases are that we fix \texttt{S1} and one set of \texttt{S2}, \texttt{S3}, \texttt{S4} and \texttt{S5} and now take all $n-5$ element subsets of \texttt{set\_of\_four\_sets} without these two sets. For these, their degree is now calculated and all occurring degrees are put into the return value.  This is done for $n=9$ and $n=10.$ 
\end{proof}
\begin{remark}
	As described, this method is not optimal yet and it would be a future project to optimize the algorithm by making use of all symmetries. Then it probably would also be possible to calculate further results in a reasonable runtime. On a standard laptop the case $n=9$ took about 20 minutes and the case $n=10$ took about a week. In comparison, the case $n=8$ takes about 4 seconds and $n=11$ is estimated to take over 20 years.
\end{remark}
\begin{remark}
	In \cite[Theorem 5.3]{GGS20} it is shown that the count of embeddings of so-called Laman graphs in the sphere corresponds to cross-ratio degrees with even $n$. Some examples of high degrees are found in Table 1 of \cite{GGS20}. Hence, we can extend the known lower bounds from \cite{R24} in the cases of $n=12$ and $n=14,$ and thus also for $n=13.$ For $n=12,$ this bound is also found to be sharp, see the next Theorem \ref{n12}. This number is marked in green and the other new bounds are marked in purple in table \ref{maxtable}.
\end{remark}

\begin{lemma}[\cite{BELL25}]\label{Kapranovlemma}
	For a set of cross-ratios $\{C_1,...,C_9\}$ using the markings $\U_{12}=\{S_1,...,S_9\},$ the degree of the map $\pi_{\U_{12}}$ can not be larger than 32 if  there is a number $m \in [12]$ such that $m$ occurs in at least four of the sets in $\U_{12}.$
\end{lemma}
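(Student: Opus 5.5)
We will deduce the statement from \cite[Corollary 4.4]{BELL25} by reducing to smaller moduli spaces with the forgetful map at the heavily used marking, and then use the known values of $C(n)$ for small $n$ from the table in the preceding remark. Fix $m\in[12]$ that occurs in at least four of the sets $S_1,\dots,S_9$. Put $\U_m=\{S\in\U_{12}: m\in S\}$ and $\U'=\U_{12}\setminus\U_m$, with $r=|\U_m|\ge 4$.

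The key step is a degree inequality for the forgetful map $ft_m:\M_{0,12}^{\mathrm{trop}}\to\M_{0,[12]\setminus\{m\}}^{\mathrm{trop}}$. We first fix general values of the cross-ratios in $\U'$. These are $9-r$ conditions on $11$-marked curves, so they cut out a tropical cycle $Z\subset \M_{0,11}^{\mathrm{trop}}$ of dimension $8-(9-r)=r-1$. On the fibre of $ft_m$ over a general point of $Z$, the $r$ conditions in $\U_m$ pin down the position of the end $m$. The fibre of $ft_m$ is a tropical curve, namely the underlying $11$-marked curve itself, so we may regard the conditions of $\U_m$ as restricting where $m$ is attached.

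Counting with multiplicities, we should obtain
\[ d_{\U_{12}} \le \sum d_{\U''}, \]
where each $\U''$ on $11$ or fewer markings arises by replacing the conditions in $\U_m$ by forgetting $m$. The result of \cite[Corollary 4.4]{BELL25} packages exactly this kind of bound in terms of Kapranov degrees. Concretely, it should state that when a marking lies in $r$ of the sets, the degree is bounded by the binomial coefficient $\binom{r-1}{\cdot}$ times a maximal degree on $n-1$ markings. Alternatively it bounds the degree by a product of $\psi$-class intersection numbers on $\overline{M}_{0,n}$. I will invoke that corollary directly and not reprove it.

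The numerical step is to evaluate the resulting bound for each $r\in\{4,\dots,9\}$. For this we plug in the known values $C(9)=6$, $C(10)=10$ and $C(11)\le$ the corresponding bound, together with the small cases $C(n)$ for $n\le 8$, and check that every case gives at most $32$. If needed, the case $r=4$ splits further according to how the four sets containing $m$ overlap. In the degenerate situations where $\U'$ already fails the hypothesis of Definition \ref{mappi}, the degree is $0$.

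The main obstacle is that the bound from \cite{BELL25} is phrased for Kapranov degrees and multidegrees. We must therefore identify $d_{\U_{12}}$ as a specific Kapranov degree, using that $d_\U=D_\U$ and that each factor $\M_{0,4}$ pulls back to a $\psi$-type divisor class. We must also verify that the inequality it yields is sharp enough, at most $32$, for the smallest case $r=4$. There the available slack is least, so this case is where I would do the explicit computation first.
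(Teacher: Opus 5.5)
There is a genuine gap at the central step. The inequality $d_{\U_{12}}\le\sum d_{\U''}$ is neither defined nor justified. Deleting $m$ from a set in $\U_m$ leaves a three-element set, which imposes no condition on $11$-marked curves. So there is no collection $\U''$ of four-element sets on at most $11$ markings that could replace the conditions of $\U_m$.

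Your own dimension count shows why the reduction fails. Over a general point of $Z$ the fibre of $ft_m$ is one-dimensional, and the $r\ge 4$ conditions of $\U_m$ are overdetermined on it. So the conditions do not pin down $m$ there; solutions lie only over finitely many special points of $Z$. The count is a degree on the $r$-dimensional space $ft_m^{-1}(Z)$, not a sum of cross-ratio degrees on fewer markings.

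There are two further problems. You guess two possible forms of \cite[Corollary 4.4]{BELL25}, so the argument does not rest on a precise citable statement. And the numerical step needs an upper bound on $C(11)$, which the paper does not have; the table only records $C(11)\ge 13$.

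The paper's proof is a one-line citation of \cite{BELL25}. What is needed from there is a bound that depends only on the number $k$ of sets containing $m$, with no recursion in $n$ and no values $C(9)$, $C(10)$, $C(11)$. Here is a direct argument giving such a bound, via Kapranov's model.
\begin{itemize}
\item The class $\psi_m$ defines a birational morphism $\overline{M}_{0,12}\to\mathbb{P}^{9}$, so $\int\psi_m^{9}=1$.
\item If $m\in S$, then $\pi_S^*[\mathrm{pt}]=\pi_S^*\psi_m^{(S)}$, and $\psi_m-\pi_S^*\psi_m^{(S)}$ is a nonnegative sum of boundary divisors.
\item If $m\notin S$, factor $\pi_S$ through $\overline{M}_{0,S\cup\{m\}}$. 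Via $\psi_m$ this space is $\mathbb{P}^2$ blown up in four points, and the fibres of forgetting $m$ are the conics through those points. Hence $2\psi_m-\pi_S^*[\mathrm{pt}]$ is effective.
\end{itemize}
All classes involved are nef, so replacing a factor by a larger nef class can only increase the intersection number. Therefore $d_{\U_{12}}=D_{\U_{12}}\le\int\psi_m^{k}(2\psi_m)^{9-k}=2^{9-k}\le 32$ whenever $k\ge 4$. This confirms your intuition that $k=4$ is the tight case, and it is what the proof of Theorem \ref{n12} uses.
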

\begin{proof}
	This is a conclusion of Remark 4.4 in \cite{BELL25}.
\end{proof}
\begin{lemma}\label{intersect}
	Up to relabeling, there is only one possibility for a set $\U_{12}=\{S_1,...,S_9\}$ of four-element subsets of $[12]$ such that for any $i,j\in\{1,...,9\},$ the intersection $S_i\cap S_j$ is non-empty and each number in $[12]$ is contained in exactly three of the sets in $\U_{12}.$
\end{lemma}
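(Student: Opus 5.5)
The plan is to pass to the dual incidence structure, where the two conditions turn into the axioms of a Steiner triple system on $9$ points. Uniqueness then follows from the classical fact that the Steiner triple system $S(2,3,9)$ is unique up to isomorphism: it is the affine plane $AG(2,3)$.

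\textbf{Dualising.} For each marking $m\in[12]$, let $B_m=\{i\in\{1,\dots,9\} : m\in S_i\}$. By hypothesis every $B_m$ is a $3$-element subset of the $9$-element set $\{1,\dots,9\}$. The incidence relation $m\in S_i \iff i\in B_m$ is symmetric, so relabelings of $[12]$ and of the index set $\{1,\dots,9\}$ correspond exactly to isomorphisms of the family $\{B_m\}_{m\in[12]}$. Hence it suffices to show that this dual family is unique up to isomorphism.

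\textbf{Counting.} The condition $S_i\cap S_j\neq\emptyset$ for all $i\neq j$ says that every pair $\{i,j\}$ lies in at least one $B_m$. There are $\binom{9}{2}=36$ such pairs. The twelve triples together contain at most $12\cdot\binom{3}{2}=36$ pairs, counted with repetition. Therefore every pair lies in exactly one $B_m$. In particular the $B_m$ are pairwise distinct, and $\{B_m\}$ is a Steiner triple system on $9$ points. As a consistency check, each point then lies in $(9-1)/2=4$ triples, which matches $|S_i|=4$. Conversely, reading the affine plane $AG(2,3)$ back through the same duality gives a family satisfying both hypotheses, so at least one such $\U_{12}$ exists.

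\textbf{Uniqueness of $S(2,3,9)$.} This is the only step with real content. It is classical and could be cited, but I would include a short self-contained argument.
\begin{enumerate}
\item Fix a triple $B$. Each point of $B$ lies in $3$ further triples, so $B$ meets exactly $9$ other triples. The remaining $12-1-9=2$ triples are disjoint from $B$, and they must cover the $6$ points outside $B$. Otherwise some pair of those $6$ points would have to be covered by a triple meeting $B$, and counting pairs shows this fails.
\item It follows that every triple lies in a unique partition of the $9$ points into three triples (a parallel class), and the $12$ triples fall into $4$ parallel classes of $3$ triples each.
\item Fix coordinates: let the first parallel class be the "rows" and the second the "columns". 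Each point is then uniquely determined by its row and column, which identifies the point set with $\mathbb{Z}_3^2$.
\item Each triple in the remaining two classes meets every row and every column exactly once, so it is a transversal. Such a class is therefore a Latin square of order $3$.
\item There are only two Latin squares of order $3$ up to relabeling, namely the two "diagonal" ones. Orthogonality (each pair of points covered exactly once) forces the third and fourth classes to be these two diagonal directions.
\end{enumerate}
The resulting structure is exactly the set of lines of $AG(2,3)$.

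Finally, I would translate back by writing $\U_{12}$ explicitly. Label the $9$ sets by the points of $\mathbb{Z}_3^2$ and the $12$ markings by the lines of $AG(2,3)$; then $S_p$ is the set of the $4$ lines through the point $p$. I expect the main obstacle to be the parallel-class argument in step 1 above, since it is the only place where the count has to be done carefully. Everything after it is a short rigid case check.
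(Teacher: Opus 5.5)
Your proposal is correct, and it takes a genuinely different route from the paper.

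\textbf{The paper's route.} It works directly with the nine $4$-sets. It fixes $S_1=\{1,2,3,4\}$ and notes that the eight other sets must all meet $S_1$, while each element of $S_1$ has only two further occurrences. Hence every other set meets $S_1$ in exactly one element, and by symmetry all pairwise intersections are singletons. It then places the markings $5,\dots,12$ one at a time through a chain of w.l.o.g.\ choices plus one contradiction (excluding $12\in S_4$), and arrives at an explicit list of nine sets.

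\textbf{Your route.} You pass to the dual family of twelve triples. You obtain the singleton-intersection property globally from the double count $\binom{9}{2}=36=12\cdot\binom{3}{2}$. You then reduce the rest to the classical uniqueness of the Steiner triple system $S(2,3,9)$, i.e.\ $AG(2,3)$, via parallel classes and transversals of the $3\times 3$ grid.

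\textbf{What each buys.} Your argument identifies the configuration conceptually: the sets are the points of the affine plane of order $3$, and the markings are its lines. It gives ``up to relabeling'' a precise meaning as isomorphism of incidence structures, and it also proves existence. It replaces the sequence of w.l.o.g.\ placements, each of which rests on a symmetry of the partial configuration that the paper asserts rather than checks, by a rigid structural argument. The paper's route is more elementary, and it hands you directly the explicit family that is then fed into Goldner's algorithm.

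\textbf{Two places to tighten.}
\begin{enumerate}
\item In step 1, ``counting pairs shows this fails'' is vaguer than needed. A point $y\notin B$ lies in exactly three blocks meeting $B$: one through each point of $B$, since no block other than $B$ contains two points of $B$. Since $y$ lies in four blocks, it lies in exactly one of the two blocks disjoint from $B$. This forces those two blocks to partition the complement of $B$.
\item In step 5, the precise fact is this. The six transversals correspond to the six bijections rows $\to$ columns, and two transversals are disjoint exactly when the bijections differ by a fixed-point-free permutation, i.e.\ a $3$-cycle. So the six transversals split into exactly two triples of mutually disjoint ones, and these are the only two partitions into transversals. The third and fourth parallel classes must therefore be these two, as they cannot coincide.
\end{enumerate}
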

\begin{proof}
	W.l.o.g, we can say that $S_1=\{1,2,3,4\}.$  As any two of the sets $S_1,...,S_9$ have to intersect and all numbers of $1,...,12$ are contained in exactly three of these $S_i,$ we find that there is no set $S_j$ with $j\in\{2,...,9\}$ such that there are two numbers if $1,2,3,4$ contained in it. Otherwise, there would be a set $S_i$ that has empty intersection with $S_1.$ Thus, we can assume w.l.o.g. that 1 is contained in $S_1,$ $S_2$ and $S_3,$ 2 is contained in $S_1,$ $S_4$ and $S_5,$ 3 is contained in $S_1,$ $S_6$ and $S_7,$ and 4 is contained in $S_1,$ $S_8$ and $S_9.$ So, up to the symmetry of relabeling, 
	$$S_1=\{1,2,3,4\}, S_2\supset\{1\}, S_3\supset\{1\}, S_4\supset\{2\}, S_5\supset\{2\}, S_6\supset\{3\}, S_7\supset\{3\}, S_8\supset\{4\}, S_9\supset\{4\}.$$
	
	Due to symmetry reasons, we find that for all $i,j \in \{1,...,9\},$ the intersection of $S_i$ and $S_j$ contains exactly one element. Thus, we can say that w.l.o.g (up to symmetry) $S_2=\{1,5,6,7\}$ and $S_3=\{1,8,9,10\}.$
	
	Up to now, we do not know in which sets the numbers 11 and 12 can be. As any two sets $S_i$ and $S_j$ must have an intersection of size 1, none of these numbers can be contained in both of $S_4$ and $S_5,$ $S_6$ and $S_7$ or $S_8$ and $S_9.$ Thus, we can say that w.l.o.g, 11 is contained in $S_4,$ $S_6$ and $S_8.$ As 12 can only be contained in 0 or 1 of the sets that also contain 11, we can say that w.l.o.g $12\in S_5$ and $12\in S_7.$ Assume, 12 is also contained in a set that contains 11, we can say it is $S_4.$
	
	Then, up to the symmetry of relabeling, 
	$$S_1=\{1,2,3,4\}, S_2=\{1,5,6,7\}, S_3=\{1,8,9,10\}, $$ $$S_4\supset\{2,11,12\}, S_5\supset\{2,12\}, S_6\supset\{3,11\}, S_7\supset\{3,12\}, S_8\supset\{4,11\}, S_9\supset\{4\}.$$
	
	The set $S_4$ also has to contain a fourth number which can only be one of $5,...,10.$ This now is a contradiction because if this fourth number was 5, 6 or 7, the intersection of $S_4$ with $S_3$ would be empty and if this number was 8, 9 or 10, the intersection of $S_4$ with $S_2$ would be empty. Thus, we find that $12\notin S_4$ and $12\in S_9.$
	
	Thus, we know now that up to the symmetry of relabeling, 
	$$S_1=\{1,2,3,4\}, S_2=\{1,5,6,7\}, S_3=\{1,8,9,10\}, $$ $$S_4\supset\{2,11\}, S_5\supset\{2,12\}, S_6\supset\{3,11\}, S_7\supset\{3,12\}, S_8\supset\{4,11\}, S_9\supset\{4,12\}.$$
	
	Two of the sets $S_4,...,S_9$ also have to contain the number 5 and they cannot intersect in the numbers 1, 2, 3, 4, 11, or 12 because we always need an intersection size of one as mentioned above. As the sets $S_4,...,S_9$ just differ by relabeling, we can say that $5\in S_4$ and $5\in S_7.$ Analogously we can think about 6  and 7 but they cannot be in one of the sets that already contains 5 because these numbers already appear together in the set $S_2$. W.l.o.g, we say that $6\in S_8$, then it follows that $7\in S_9,$ $6\in S_5$ and $7\in S_6.$ 
	
	So, we know $$S_1=\{1,2,3,4\}, S_2=\{1,5,6,7\}, S_3=\{1,8,9,10\}, $$ $$S_4\supset\{2,11,5\}, S_5\supset\{2,12,6\}, S_6\supset\{3,11,7\}, S_7\supset\{3,12,5\}, S_8\supset\{4,11,6\}, S_9\supset\{4,12,7\}.$$
	
	Now, just the numbers 8, 9 and 10 are not completely determined yet. W.l.o.g, we say that $8\in S_4$. Then, the only set left over where 8 can also be in is $S_9.$ In the same way, we can say that 9 is contained in $S_5$ and $S_6$ and 10 is contained in $S_7$ and $S_8.$ Thus, we are finished with $$S_1=\{1,2,3,4\}, S_2=\{1,5,6,7\}, S_3=\{1,8,9,10\}, $$ $$S_4\supset\{2,11,5,8\}, S_5\supset\{2,12,6,9\}, S_6\supset\{3,11,7,9\}, S_7\supset\{3,12,5,10\}, S_8\supset\{4,11,6,10\}, S_9\supset\{4,12,7,8\}.$$
	
	As we only had one possibility to choose the numbers for the sets up to symmetry such that all demanded conditions still hold, we have proven the lemma.
\end{proof}
\begin{theorem}\label{n12}
	For a set of 9 cross-ratios $\{C_1,...,C_9\}$ using the markings $\U_{12}=\{S_1,...,S_9\}$, the degree of the map $\pi_{\U_{12}}$ from Definition \ref{mappi} is at most 32.
\end{theorem}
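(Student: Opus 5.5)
We reduce to the single configuration of Lemma \ref{intersect} and settle that one by computer. Fix $\U_{12}=\{S_1,\dots,S_9\}$. If some marking $m\in[12]$ lies in at least four of the sets, then Lemma \ref{Kapranovlemma} already gives $d_{\U_{12}}\le 32$. So from now on every marking lies in at most three of the sets. The sets contain $9\cdot 4=36$ markings counted with multiplicity, and there are $12$ markings, each used at most three times. Hence every marking of $[12]$ lies in exactly three of the sets.

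Next we handle the case where two sets are disjoint, say $S_i\cap S_j=\emptyset$. We look for a splitting as in Lemma \ref{trop25}, choosing $i_1,i_2,i_3$ and a partition $X\sqcup Y$ so that every set lies on one side. This does not follow directly, so the plan is different. If some marking $m$ appears in at most one set, then $d_{\U_{12}}$ can be bounded via a forgetful/reduction argument. But here every marking appears exactly three times, so that route is unavailable.

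The main obstacle is this disjoint case. I would not try to prove a general decomposition. Instead I would extend the computer enumeration from the proof of Theorem \ref{max9}, using \texttt{deg\_by\_Goldner}, and restrict it to set systems in which every marking lies in exactly three sets. Up to relabeling this is a far smaller search than the full $n=12$ enumeration, which is estimated to be infeasible. We would generate these $3$-regular $4$-uniform hypergraphs on $12$ vertices with $9$ edges up to isomorphism, and compute each degree. For the check to be meaningful, every degree must come out at most $32$.

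In the remaining case all pairs of sets intersect. By Lemma \ref{intersect} there is, up to relabeling, exactly one such $\U_{12}$, namely the one constructed explicitly in its proof. We evaluate its degree with \texttt{deg\_by\_Goldner} and check that it is at most $32$. Together with the lower bound $32$ from \cite{GGS20}, this also confirms sharpness.

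If the enumeration of the $3$-regular systems with a disjoint pair turns out to be too heavy, a fallback is available. Two disjoint sets $S_i,S_j$ use $8$ of the $12$ markings. We would try to show that Goldner's recursion from \cite{Go21} then splits the degree into products of cross-ratio degrees for smaller $n$. The known values $C(n)$ for $n\le 10$ from the table would then bound these products by $32$.
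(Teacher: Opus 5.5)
Your argument is the paper's: Lemma \ref{Kapranovlemma} together with the count $36=12\cdot 3$ forces every marking into exactly three sets. The pairwise-intersecting configuration is unique by Lemma \ref{intersect} and is settled by a Goldner computation, which in the paper gives degree $19$, and the configurations with a disjoint pair are handled by computer enumeration, which the paper reports gives every degree from $0$ to $32$ except $30$ and $31$. The only difference is bookkeeping: the paper fixes $S_1=\{1,2,3,4\}$, $S_2=\{5,6,7,8\}$ and combines Venn-diagram distributions of $\{1,2,3,4\}$, $\{5,6,7,8\}$ and $\{9,10,11,12\}$ over $S_3,\dots,S_9$ rather than generating isomorphism classes of $3$-regular $4$-uniform hypergraphs; your speculative fallback via a product splitting of Goldner's recursion is not needed.
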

\begin{proof}
	Using Lemma \ref{Kapranovlemma}, we find that the degree of such a set $\U_{12}=\{S_1,...,S_9\}$ can only be larger than 32 if all numbers $n\in[12]$ appear in at most 3 sets of $\U_{12}.$ By the pigeonhole principle we find that they then must appear in exactly 3 of these sets.
	
	We know from Lemma \ref{intersect} that up to symmetry there is just one possibility of how the set $\U_{12}=\{S_1,...,S_9\}$ of cross-ratio markings can look like if we assume that all sets $S_i$ and $S_j$ intersect nonempty and every number in $[12]$ appears exactly three times. For this set $\U_{12},$ we can calculate with Goldners algorithm that the degree of the map $\pi_{\U_{12}}$ is 19. Thus, for the rest of the proof, we can assume that all numbers in $[12]$ appear exactly in three of the sets in $\U_{12}$ and there are two sets $S_i$ and $S_j$ such that their intersection is empty and show that then, the degree cannot be larger than 32. W.l.o.g, we can say that $S_1=\{1,2,3,4\}$ and $S_2=\{5,6,7,8\}.$
	
	Now, we consider all possibilities up to symmetry on how to put each two of the numbers 1, 2, 3, and 4 in the sets $S_3,...,S_9.$ We find that there are only 17 possibilities to do so and they are found in Table \ref{1234}. An explanation on how to read these diagrams is found in section \ref{Venn}. The same can be done with each two of the numbers 5, 6, 7 and 8. We now also consider all possibilities up to symmetry on how to put each three of the numbers 9, 10, 11 and 12 into the sets $S_3,...,S_9.$ As there do not have to be three pairwise distinct sets, we also consider the cases where there is no set $S_i=\{9,10,11,12\}.$ These can be found in Table \ref{no-four-set}.
	The rest of the proof is done with the help of a computer program that can be found in \cite{code}.
	
	First, the program takes every of the $\binom{17}{2}$ possibilities to choose 2 of the each 17 cases for the numbers 1, 2, 3 and 4 and 5, 6, 7 and 8 and tries every possibility to combine them. Then it is counted how many spaces are left in each set $S_3,...,S_9$ according to these numbers, only the ones with the fitting sizes of the cases for the numbers 9, 10, 11 and 12 are considered. These numbers are then put into the sets $S_3,...,S_9$ in all possible ways and for the resulting set $\U_{12}=\{S_1,...,S_9\}, $ its degree is then calculated using the code with Goldners algorithm used in Theorem \ref{max9}.
	
	All possible degrees are put into the return value and we find that for these cases, all degrees from 0 up to 32 but 30 and 31 can appear. Thus, we have proven the Theorem.
	
\end{proof}

\begin{remark}
	So far, we do not have found any example of a set $\U_{12}=\{S_1,...,S_9\}$ of cross-ratio markings that has degree 30 or 31 but this could possibly exist. This is because, in this proof, we did not need to consider every possibility of how such a set of markings can look like and thus, a degree of 30 or 31 could possibly appear in the cases where there is a number in $\{1,...,12\}$ that appears in at least four of the sets in $\U_{12}.$
\end{remark}

\appendix 

\section{A Complete List of the Cases in the Proof of Theorem  \ref{allcases}}\label{cases}
In the following table, we list all the cases of how the cross-ratios corresponding to a newly added inner triangle can be interpreted and how the two new ends then are added to the existing tropical curve as described in the proof of Theorem \ref{allcases}, which completes this proof. There, the table is also explained further.
\label{Tabelle}
	\begin{longtblr}[
		caption = {All cases of how the ends $a$ and $b$ can be added},
		label = {table},
		]{	colspec = {|ccccc|},
		}
	\hline
	Int. of \color{green} $C_3$& Int. of \color{red}$C_1$& Int. of \color{blue}$C_2$&sketch&  order of $\lambda_1,$ $\lambda_2$ and $\lambda_3$\\
	\hline \hline
	d & d&d  & \includegraphics[scale=0.4]{111.pdf}&any order \\
	\hline
	d & d&n  & \includegraphics[scale=0.4]{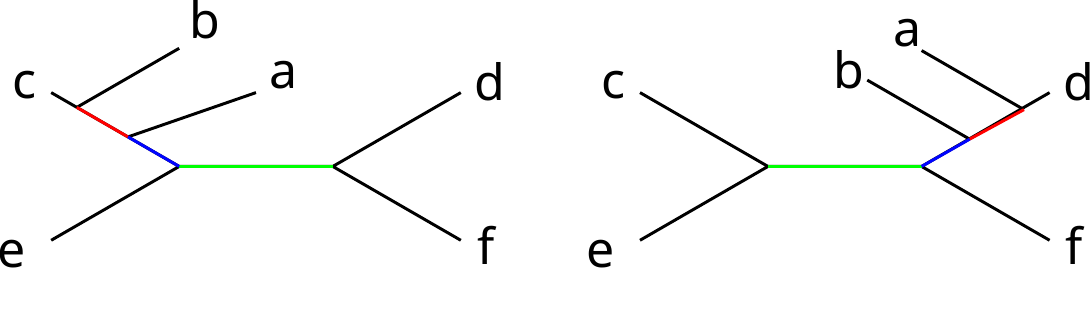}&any order \\
	\hline
	d & d&i  & \includegraphics[scale=0.4]{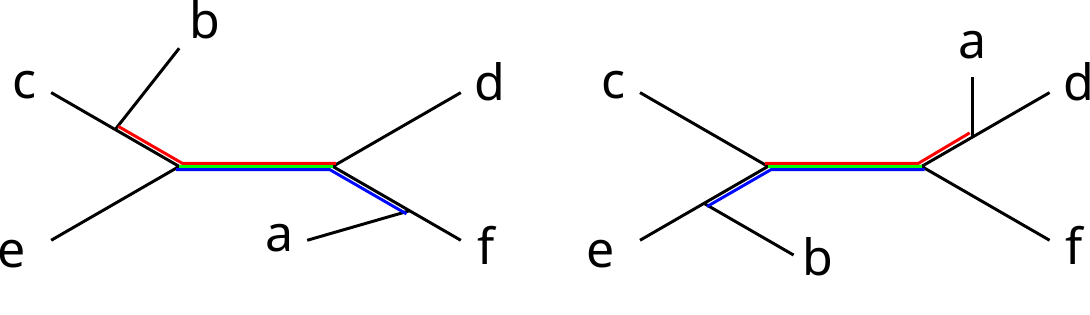}&$\lambda_3$ smallest \\
	&&&\includegraphics[scale=0.4]{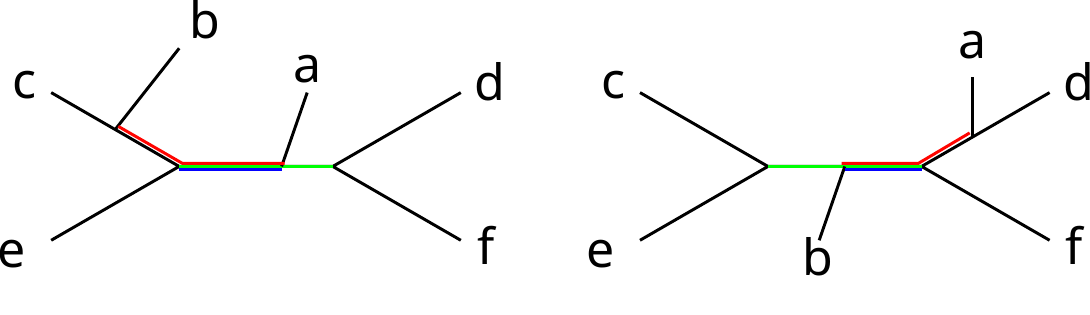}& $\lambda_2$ smallest\\
	&&&\includegraphics[scale=0.4]{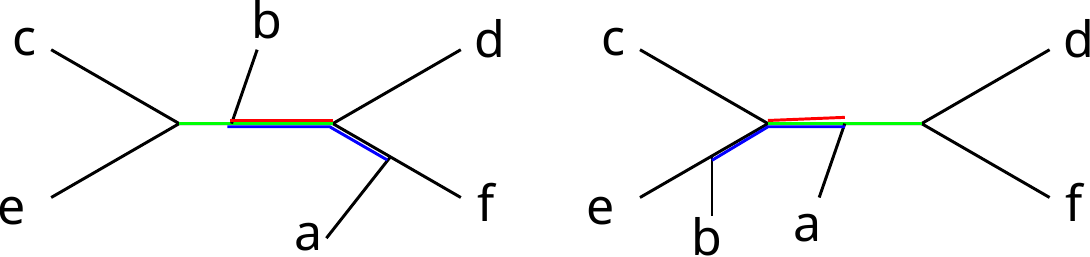}& $\lambda_1$ smallest\\
	\hline
	d & n&n  & \includegraphics[scale=0.4]{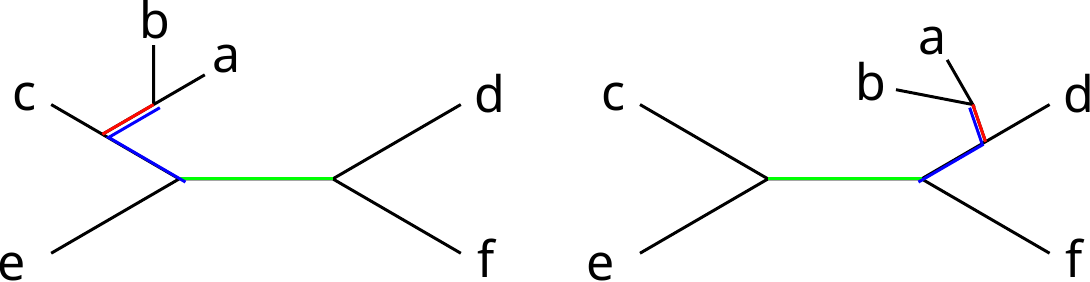}&$\lambda_1<\lambda_2$ \\
	& & &\includegraphics[scale=0.4]{122a.pdf}&$\lambda_2<\lambda_1$ \\	\hline
	d & n&i  & \includegraphics[scale=0.4]{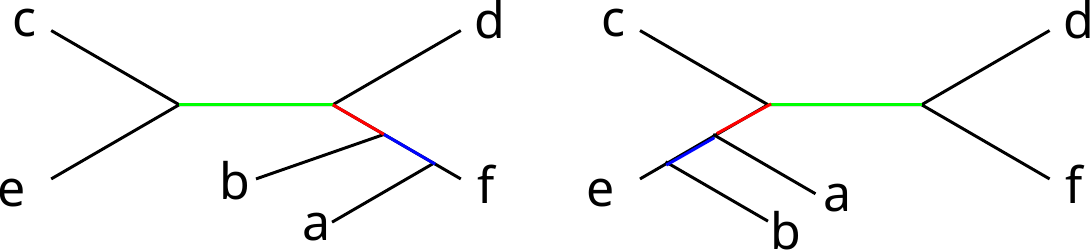}&any order \\
	\hline
	d & i&i  & \includegraphics[scale=0.4]{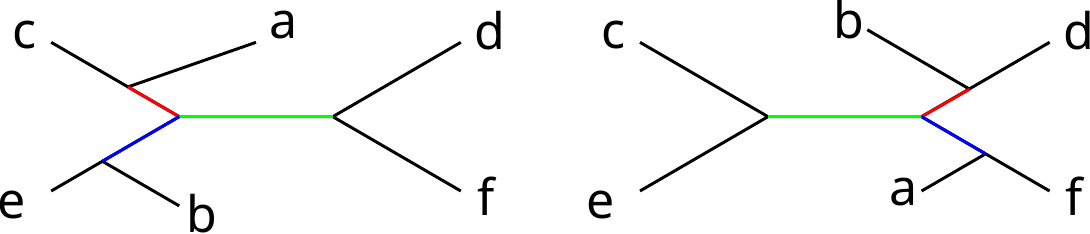}&any order \\
	\hline
	n & d&d  & \includegraphics[scale=0.4]{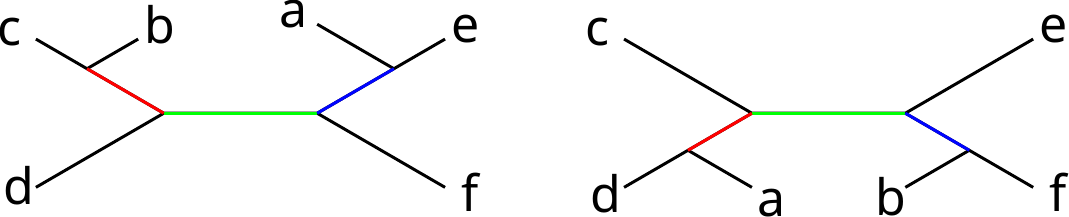}&any order \\
	\hline
	n & d&n  & \includegraphics[scale=0.4]{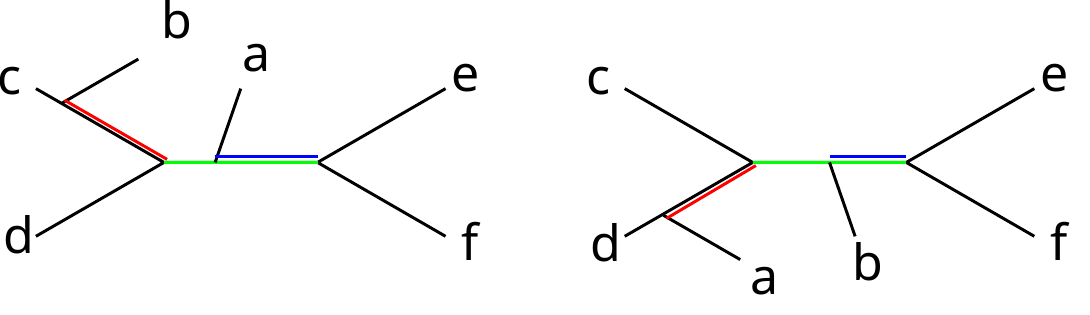}&$\lambda_2<\lambda_3$\\
	&&&\includegraphics[scale=0.4]{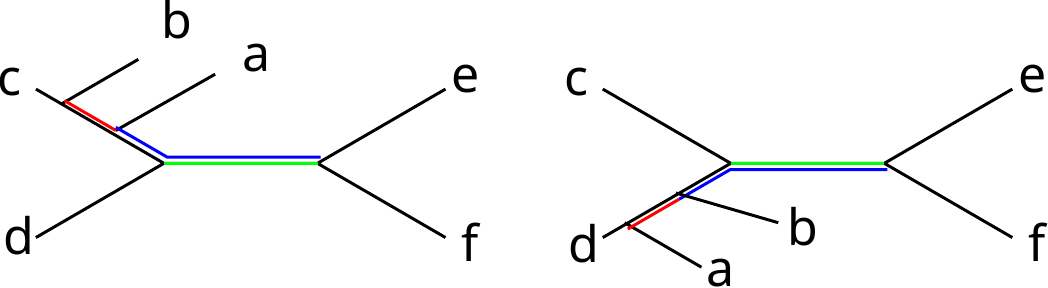}&$\lambda_2>\lambda_3$\\
	\hline
	n & d&i  & \includegraphics[scale=0.4]{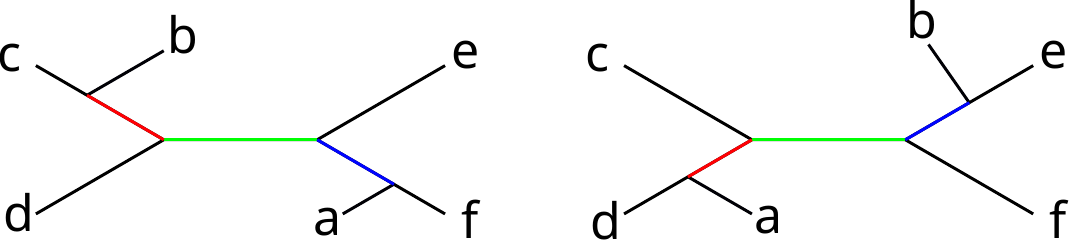}&any order \\
	\hline
	n & n&n  & \includegraphics[scale=0.4]{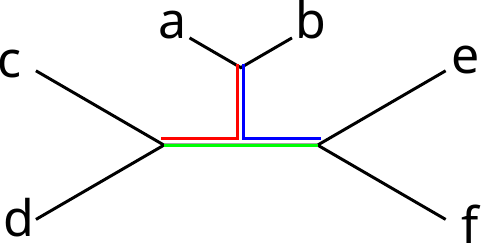}&\makecell{fulfill \\ triangle inequalities}\\
	&&&\includegraphics[scale=0.4]{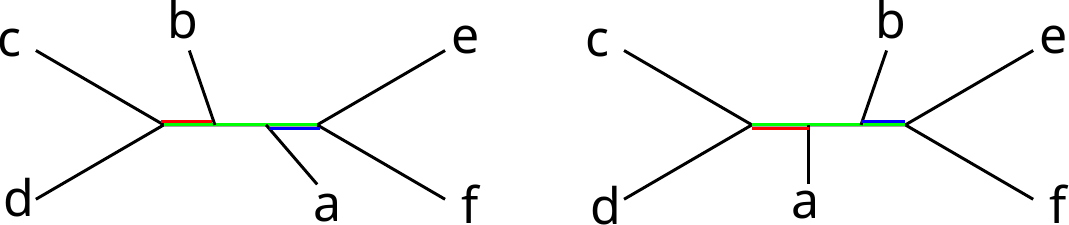}&$\lambda_3>\lambda_1+\lambda_2$\\
	&&&\includegraphics[scale=0.4]{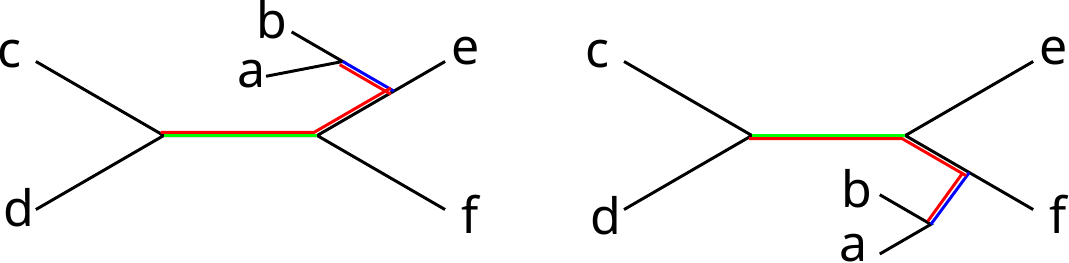}&$\lambda_1>\lambda_2+\lambda_3$\\
	&&&\includegraphics[scale=0.4]{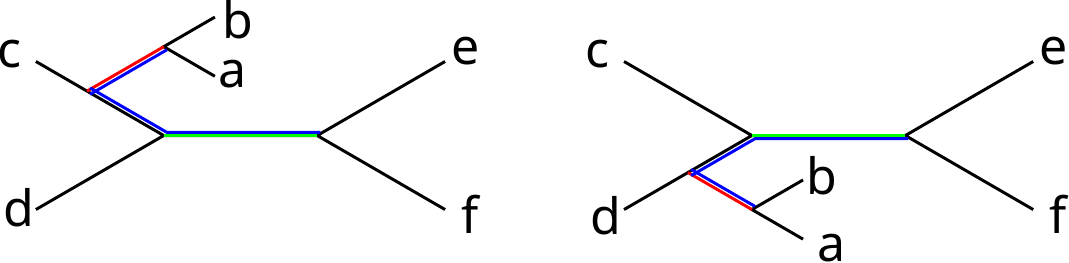}&$\lambda_2>\lambda_1+\lambda_3$\\
	\hline
	n & n&i  & \includegraphics[scale=0.4]{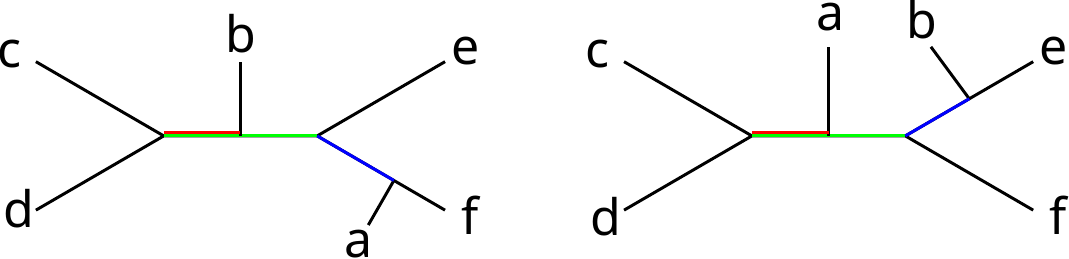}&$\lambda_1<\lambda_3$ \\
	&  && \includegraphics[scale=0.4]{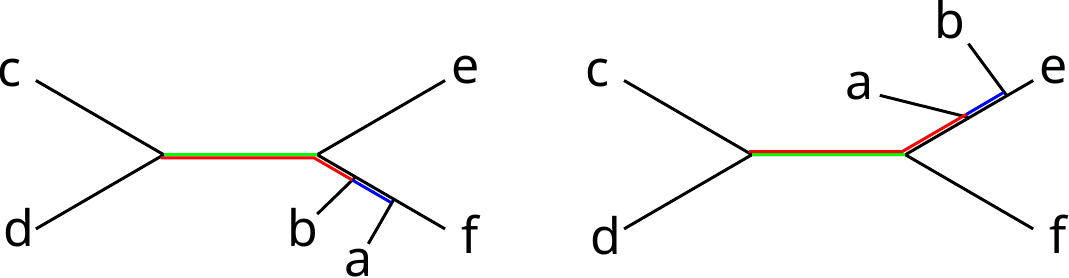}&$\lambda_1>\lambda_3$\\
	\hline
	n & i&i  & \includegraphics[scale=0.4]{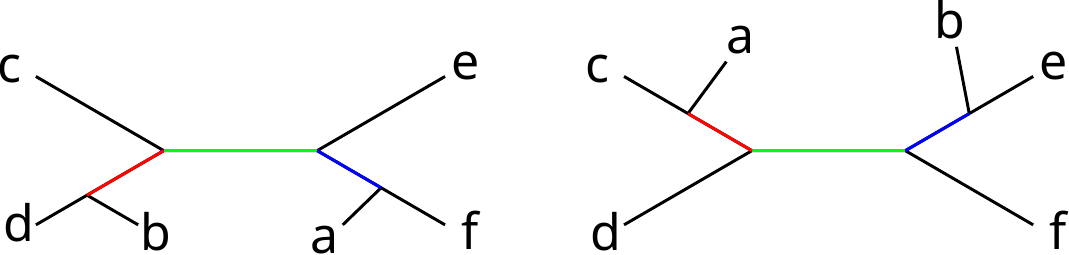}&any order \\
	\hline
	i & d&d  & \includegraphics[scale=0.4]{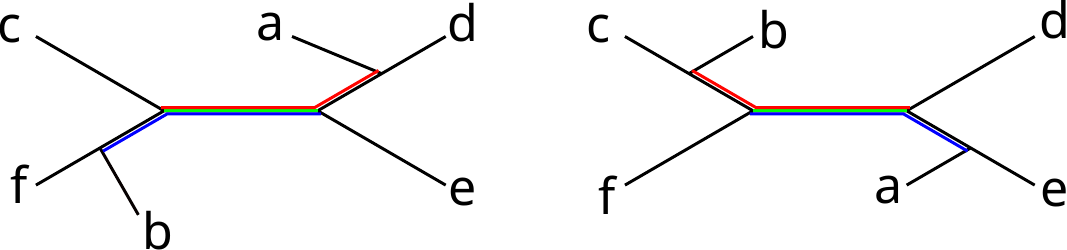}&$\lambda_3$ smallest\\
	&&&\includegraphics[scale=0.4]{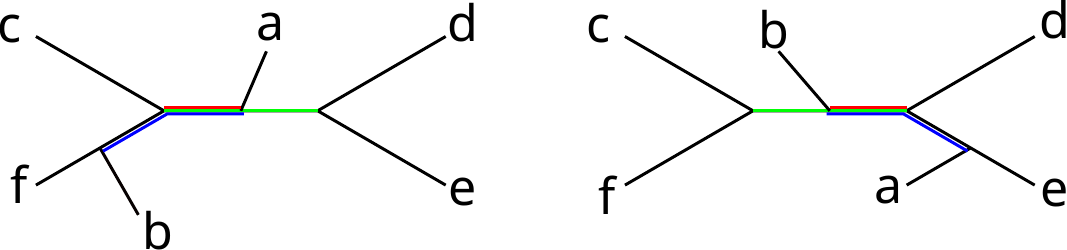}&$\lambda_1$ smallest\\
	&&&\includegraphics[scale=0.4]{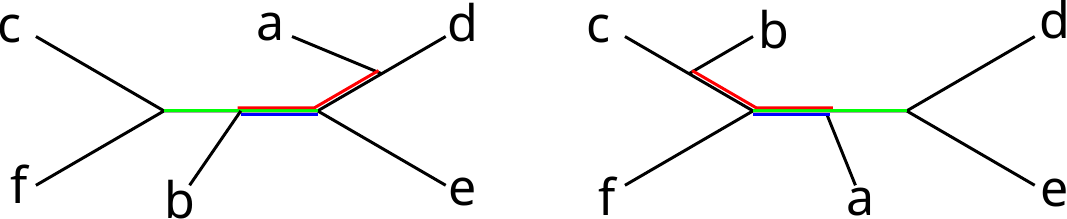}&$\lambda_2$ smallest\\
	\hline
	i & d&n & \includegraphics[scale=0.4]{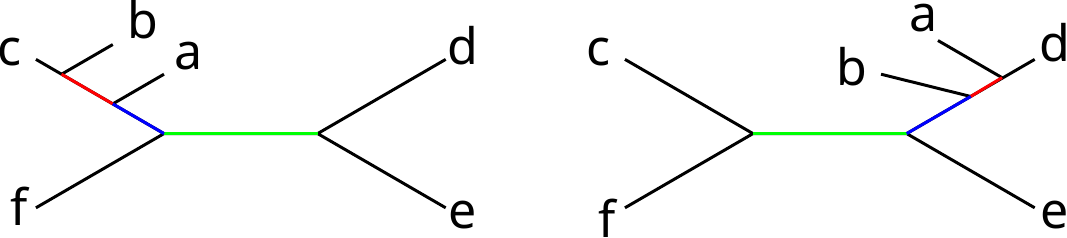}&any order \\
	\hline
	i & d&i  & \includegraphics[scale=0.4]{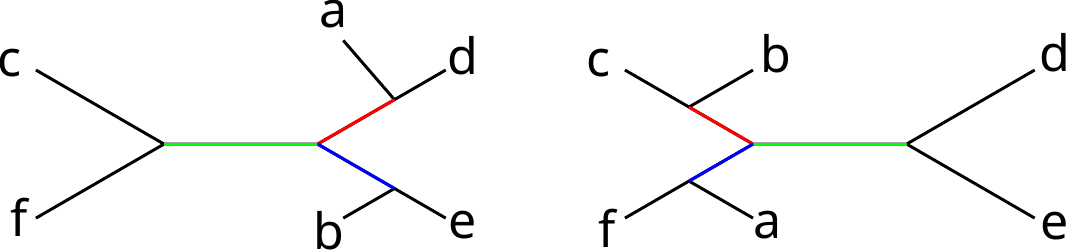}&any order \\
	\hline
	i & n&n  & \includegraphics[scale=0.4]{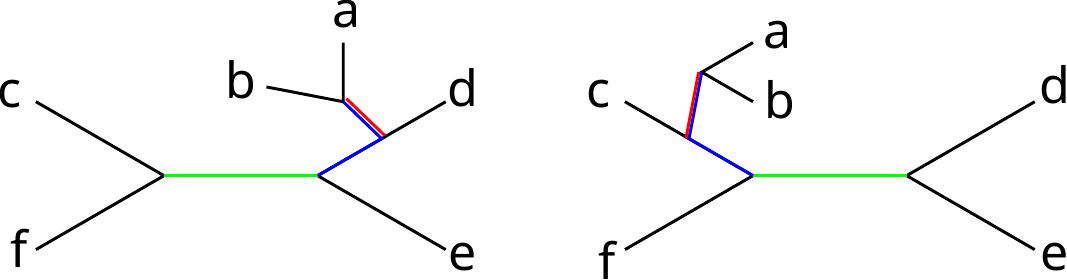}&$\lambda_1<\lambda_2$ \\
	& & &\includegraphics[scale=0.4]{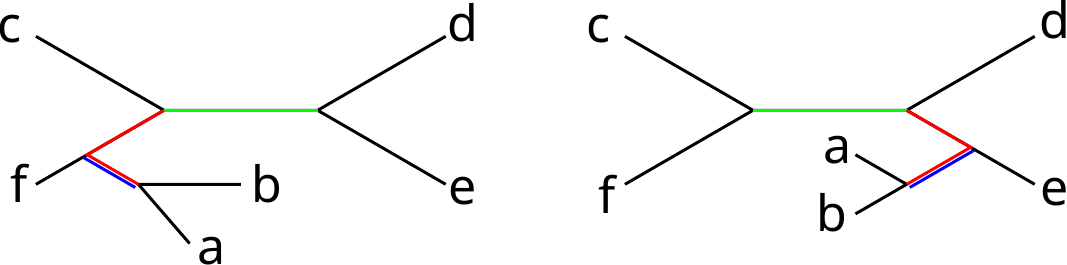}&$\lambda_2<\lambda_1$ \\
	\hline
	i & n&i  & \includegraphics[scale=0.4]{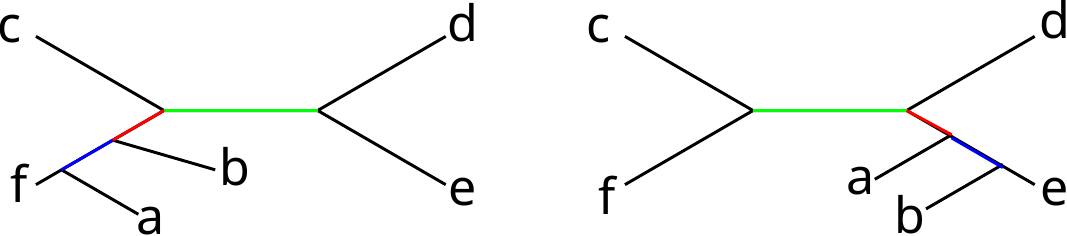}&any order \\
	\hline
	i & i&i  & \includegraphics[scale=0.4]{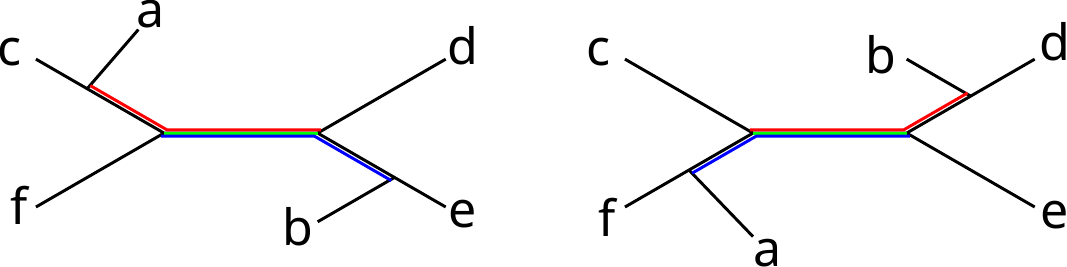}&$\lambda_3$ smallest \\
	&&&\includegraphics[scale=0.4]{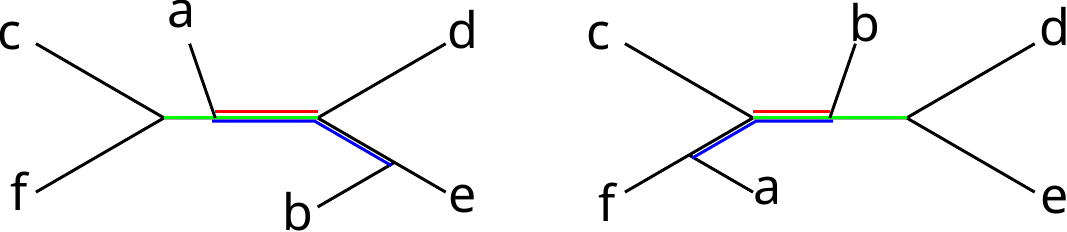}&$\lambda_1$ smallest \\
	&&&\includegraphics[scale=0.4]{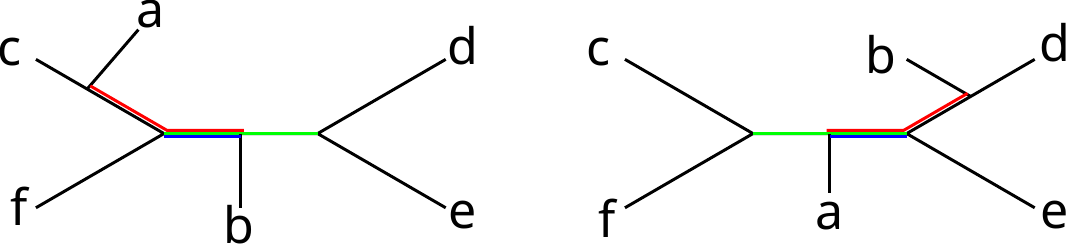}&$\lambda_2$ smallest \\ \hline
\end{longtblr}
\section{Venn Diagrams}\label{Venn}
In this section, we provide a list of all the cases of how the numbers 1,...,12 can be distributed in the proof of Theorem \ref{n12}. This is helpful for understanding the computer program used in the proof. First, we explain how to read the diagrams by which these cases are denoted.

\begin{remark}\label{Explain_Venn}
	A Venn diagram is a diagram that shows the logical relation between several sets. Most commonly used are Venn diagrams of three sets represented by three intersecting circles. Here, we use a generalization to four sets. 
	
	To depict all the relations that four sets can have to each other, we use a $4\times4$-square with one corner removed. Inside this shape, we can find four different $2\times4$- or $4\times2$-rectangles. These four rectangles represent our four sets. A sketch is depicted in Figure \ref{Venn_numbers}, where these four rectangles are drawn in yellow, magenta, cyan and gray and where they intersect, their colors are mixed. Now, we find that for each possibility of choosing one or more of the four sets, there is exactly one $1\times1$-square that represents this choice meaning that it is contained in the rectangles corresponding to the chosen sets.
	
	Another sketch for this can be seen in Figure \ref{Exp_venn}. The square labeled with "a" for example represents the possibility of just choosing the set of the yellow rectangle. The square labeled with "j" however represents choosing the sets of the magenta, gray and cyan rectangle. Thus, the letters "a", "c", "g" and "o" correspond to just choosing one set, the letters "b", "d", "f", "k", "l" and "n" represent choosing two of the sets, "e", "h", "j" and "m" represent choosing three sets and "i" represents the possibility of choosing all four. This can also be seen on the right of Figure \ref{Exp_venn}.

\begin{figure}[h]
	\centering
	\includegraphics[scale=0.5]{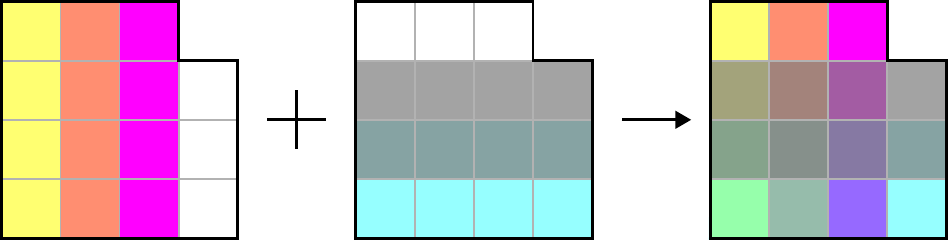}
	\caption{How to find the different sets in the Venn diagram of four sets, see also Remark \ref{Explain_Venn}.}
	\label{Venn_numbers}
\end{figure}

\begin{figure}[h]
	\centering
	\begin{subfloat}{\includegraphics[scale=0.7]{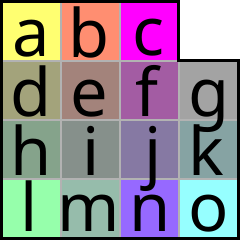}}
	\end{subfloat}
	\begin{subfloat}{\includegraphics[scale=0.7]{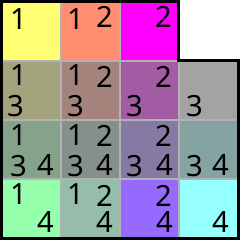}}
	\end{subfloat}
	\caption{A sketch for showing which squares correspond to choosing which sets in a Venn diagram of four elements, see also Remark \ref{Explain_Venn}.}
	\label{Exp_venn}
\end{figure}
\end{remark}

\begin{remark}
	Here, we use the Venn diagrams a little differently. We explain it first, using the numbers 1, 2, 3 and 4 but analogously this can be done for any other choice of four numbers. In the proof of Theorem \ref{n12}, we also use is for the numbers 5, 6, 7 and 8 as well as the numbers 9, 10, 11 and 12.
	
	In Table \ref{1234} and Table \ref{no-four-set}, the yellow rectangle corresponds to the number 1, the magenta rectangle to the number 2, the gray rectangle to the number 3 and the cyan rectangle to the number 4, see the right of Figure \ref{Exp_venn}. Then, there are numbers written in the $1\times1$-squares. Zeros are not written down for better overview. If in such a $1\times1$-square, there is the number $n$ and this square is part of the rectangles corresponding to the numbers $I\subset \{1,2,3,4\}$ it means that there are $n$ sets $S_{i_1},...,S_{i_n}$ that contain the numbers in $I$ but not the numbers in $\{1,2,3,4\}\setminus I.$
	
	For example, the first diagram in Table \ref{1234} represents the case that there is exactly one set that contains all the numbers 1, 2, 3 and 4, then there are two sets that contain 1, 2 and 3 but not 4 and 2 sets that contain the number 4 but no other of these numbers. So, we have that there are $S_i=\{1,2,3,4\},$ $S_j\supset\{1,2,3\},$ $S_k\supset\{1,2,3\},$ $S_l\supset\{4\}$ and $S_m\supset\{4\}.$
\end{remark}

\begin{remark}\label{n12list}
	The next table \ref{1234} provides a list of all possible cases up to symmetry of distributing the numbers 1, 2, 3 and 4 to the sets $S_3,...,S_9,$ when fixing $S_1=\{1,2,3,4\}$ and all numbers appear exactly three times. In terms of the diagrams that means that there is always a 1 in the square "i", the sum over all numbers in a $2\times4$- or $4\times2$-rectangle is always 3. As there are only eight sets $S_1,S_3...,S_9,$ the sum over all numbers in a diagram has to be at most eight.
	
	 Analogously, this list can also be used for all cases up to symmetry of distributing the numbers 5, 6, 7 and 8 to the sets $S_3,...,S_9,$ when fixing $S_2=\{5,6,7,8\}$ and all numbers appear exactly three times.
	
	For the case of finding all possibilities up to symmetry of distributing the numbers 9,10,11 and 12 to the sets $S_3,...,S_9,$ and all numbers appear exactly three times, we consider all diagrams od Table \ref{1234} where the sum over all numbers is at most seven and additionally all diagrams in Table \ref{no-four-set}. These correspond to the possibility that there is no set that contains all of these four numbers. For the latter, it thus also holds that the sum over all $2\times4$- or $4\times2$-rectangle is always 3 and the sum over all numbers of the diagram is at most seven but now, there always is a zero (no entry) for the square "i". We do not consider the cases, where the square "i" has a number larger than 1 because this means that we would have more identical sets $S_i=S_j$ that would lead to a degree $d=0$ in Theorem \ref{n12}. The markings a-g and 1-41 agree with the one used in the code.
\end{remark}
\begin{longtblr}[
	caption = {All cases of distributing four numbers in the sets $S_1,...,S_9$, when fixing one set, see Remark \ref{n12list} and Theorem \ref{n12}.},
	label = {1234},
	]{	colspec = {|cc|cc|cc|cc|cc|cc|},
	}
	\hline
	a&\includegraphics[scale=0.45]{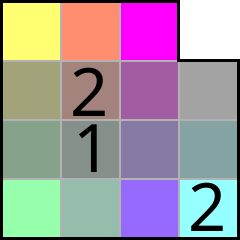}&b&\includegraphics[scale=0.45]{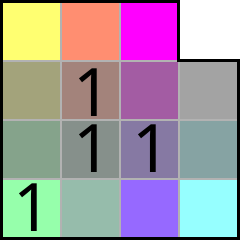}&c&\includegraphics[scale=0.45]{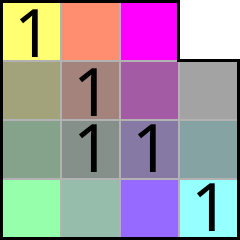}&d&\includegraphics[scale=0.45]{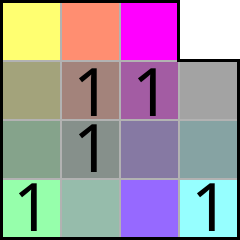}&e&\includegraphics[scale=0.45]{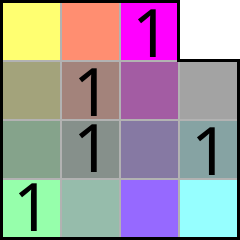}&f&\includegraphics[scale=0.45]{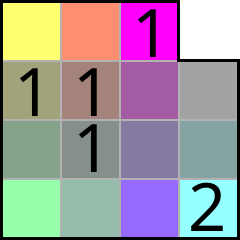}\\ \hline
	g&\includegraphics[scale=0.45]{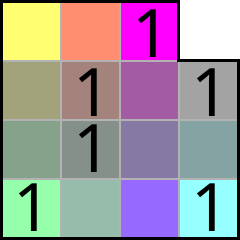}&h&\includegraphics[scale=0.45]{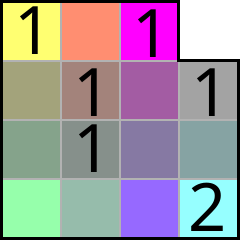}&i&\includegraphics[scale=0.45]{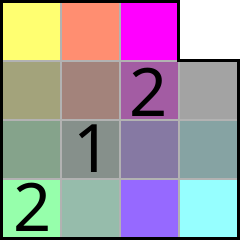}&j&\includegraphics[scale=0.45]{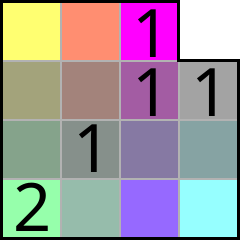}&k&\includegraphics[scale=0.45]{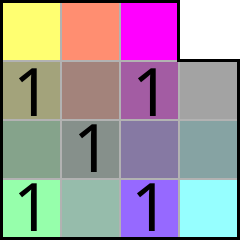}&l&\includegraphics[scale=0.45]{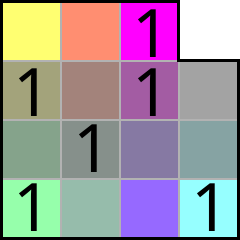}\\ \hline
	m&\includegraphics[scale=0.45]{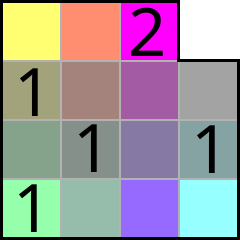}&n&\includegraphics[scale=0.45]{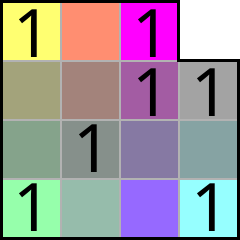}&o&\includegraphics[scale=0.45]{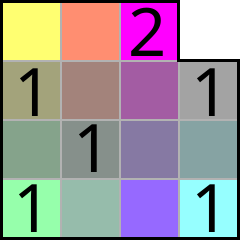}&p&\includegraphics[scale=0.45]{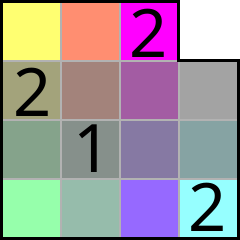}&q&\includegraphics[scale=0.45]{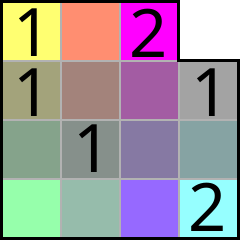}&& \\ \hline
\end{longtblr}

\begin{longtblr}[
	caption = {All cases of distributing four numbers in the sets $S_1,...,S_9$ when there is no set that contains all of them, see Remark \ref{n12list} and Theorem \ref{n12}.},
	label = {no-four-set},
	]{	colspec = {|cc|cc|cc|cc|cc|cc|}, 
	}
	\hline
	1&\includegraphics[scale=0.45]{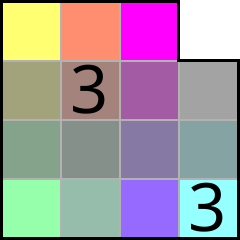}&2&\includegraphics[scale=0.45]{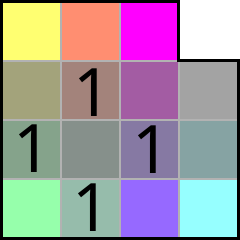}&3&\includegraphics[scale=0.45]{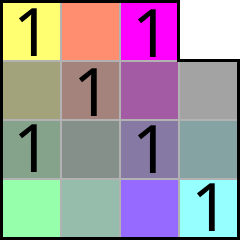}&4&\includegraphics[scale=0.45]{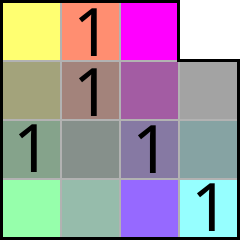}&5&\includegraphics[scale=0.45]{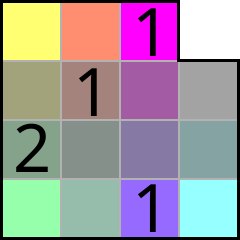}&6&\includegraphics[scale=0.45]{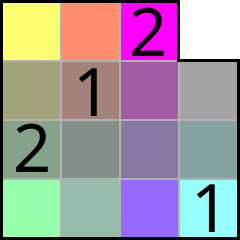}\\ \hline
	7&\includegraphics[scale=0.45]{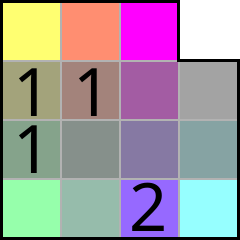}&8&\includegraphics[scale=0.45]{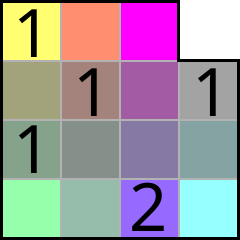}&9&\includegraphics[scale=0.45]{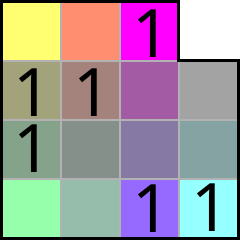}&10&\includegraphics[scale=0.45]{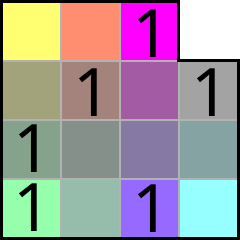}&11&\includegraphics[scale=0.45]{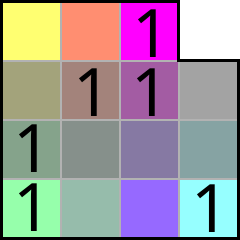}&12&\includegraphics[scale=0.45]{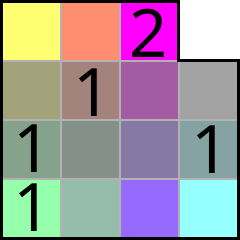}\\ \hline
	13&\includegraphics[scale=0.45]{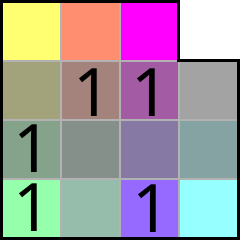}&14&\includegraphics[scale=0.45]{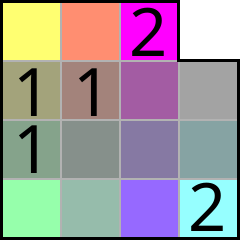}&15&\includegraphics[scale=0.45]{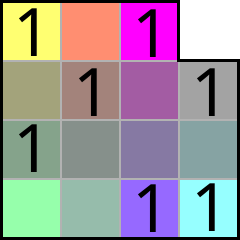}&16&\includegraphics[scale=0.45]{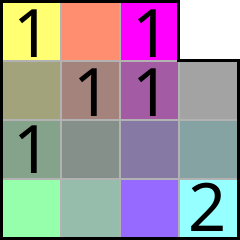}&17&\includegraphics[scale=0.45]{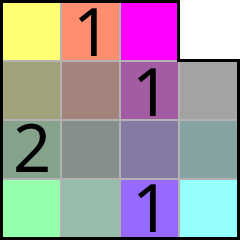}&18&\includegraphics[scale=0.45]{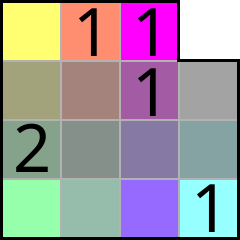}\\ \hline
	19&\includegraphics[scale=0.45]{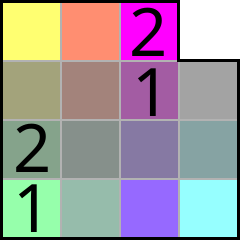}&20&\includegraphics[scale=0.45]{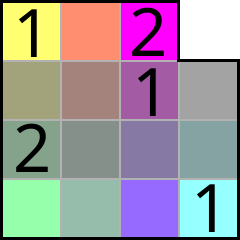}&21&\includegraphics[scale=0.45]{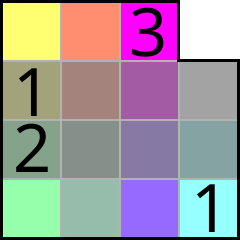}&22&\includegraphics[scale=0.45]{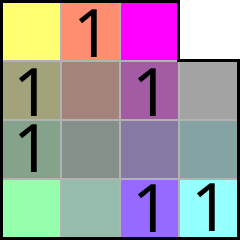}&23&\includegraphics[scale=0.45]{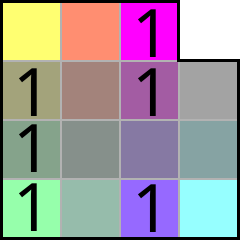}&24&\includegraphics[scale=0.45]{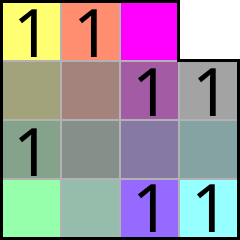}\\ \hline
	25&\includegraphics[scale=0.45]{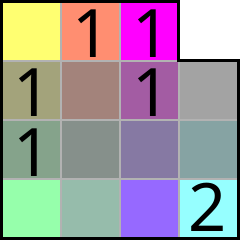}&26&\includegraphics[scale=0.45]{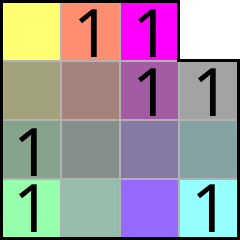}&27&\includegraphics[scale=0.45]{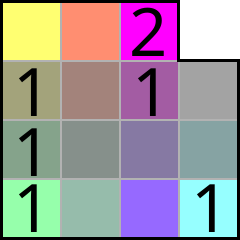}&28&\includegraphics[scale=0.45]{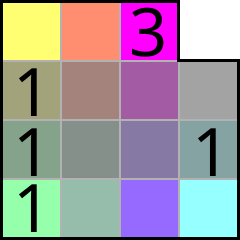}&29&\includegraphics[scale=0.45]{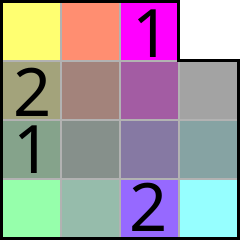}&30&\includegraphics[scale=0.45]{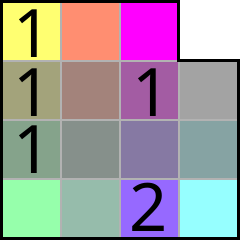}\\ \hline
	31&\includegraphics[scale=0.45]{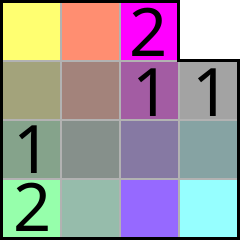}&32&\includegraphics[scale=0.45]{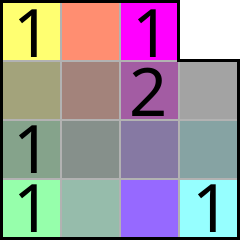}&33&\includegraphics[scale=0.45]{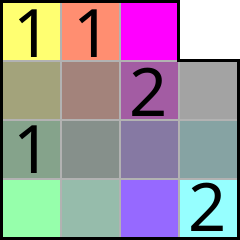}&34&\includegraphics[scale=0.45]{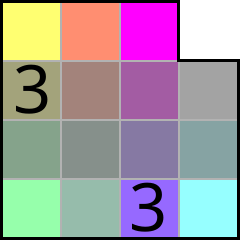}&35&\includegraphics[scale=0.45]{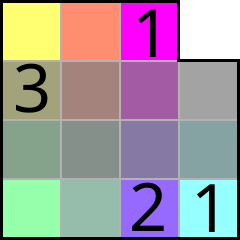}&36&\includegraphics[scale=0.45]{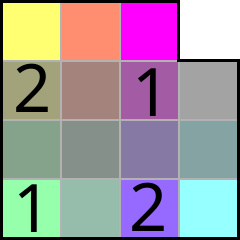}\\ \hline
	37&\includegraphics[scale=0.45]{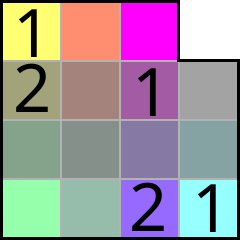}&38&\includegraphics[scale=0.45]{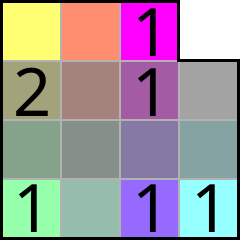}&39&\includegraphics[scale=0.45]{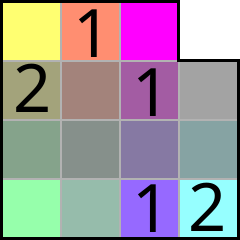}&40&\includegraphics[scale=0.45]{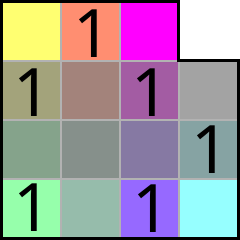}&41&\includegraphics[scale=0.45]{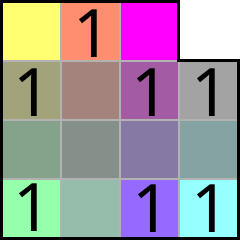}&&\\ \hline
\end{longtblr}
\section{Dual and Inverted Tropical Curves}\label{1stint}
This chapter presents a different approach on how to construct all tropical curves that fulfill a set of cross-ratio conditions given by a triangulation with all of them being interpreted in the dual way, see Definition \ref{defthreeint} and thus, also for obtaining the degree $d_\U$ as in Definition \ref{mappi}. Here, no new results are presented but rather than using a recursive argument, we provide a direct method of constructing the demanded tropical curves. 
\begin{definition}[Dual tropical curve]\label{defdual}
	For a triangulation of an $n$-gon as in Definition \ref{defthreeint}, there is an $n$-marked abstract rational tropical curve, corresponding to a point in a top-dimensional cone of $\mathcal{M}_{0,n}^{\mathrm{trop}}$, dual to it, which is given as follows. The underlying graph of the tropical curve can be constructed by first taking $n-3$ vertices corresponding to the $n-3$ triangles of the triangulation, where two vertices are connected if the corresponding triangles share an edge. If a triangle is adjacent to a an outer edge of the $n$-gon with marking $a$, we adjoin an unbounded edge with this marking to the corresponding vertex. In this way, the bounded edges of the underlying graphs correspond to the diagonals of the $n$-gon and the ends correspond to the marked outer edges of the $n$-gon. The lengths of the bounded edges of the tropical curve are given by the lengths of the corresponding diagonals of the triangulation.
\end{definition}
\begin{proposition}\label{dualcurve} Let $T$ be a triangulation of an $n$-gon as in Definition \ref{defthreeint}, with the cross-ratios $C_1,...,C_{n-3}$ all interpreted in the dual way. Then the dual tropical curve of $T$ is in $\pi_{\U(T)}^{-1}(C_1,...,C_{n-3}),$ see Definition \ref{mappi}.
\end{proposition}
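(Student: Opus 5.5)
Since the dual tropical curve $\Gamma_T$ of Definition \ref{defdual} has only three-valent inner vertices and $n-3$ bounded edges, it is a point in the interior of a top-dimensional cone of $\M_{0,n}^{\mathrm{trop}}$. To show $\Gamma_T \in \pi_{\U(T)}^{-1}(C_1,\dots,C_{n-3})$, I would check each cross-ratio condition separately, using Lemma \ref{crasft}. Fix a diagonal $\delta$ of $T$ with length $\lambda$ and its cross-ratio $C=(\lambda,\cros{k}{l+1}{k+1}{l})$ in the dual interpretation. The task is then to show two things: the paths from $k$ to $k+1$ and from $l+1$ to $l$ in $\Gamma_T$ share exactly the bounded edge $e_\delta$ dual to $\delta$, and the ends $k,l+1$ lie on one side of $e_\delta$ while $k+1,l$ lie on the other. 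Since that edge has length $\lambda$ by construction, $C$ is then fulfilled.

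The key structural step is a planar-duality description of paths in $\Gamma_T$. Cutting $\Gamma_T$ at the edge $e_{\delta'}$ dual to any diagonal $\delta'$ separates the ends into the markings of the outer edges lying on either side of $\delta'$ in the polygon. This holds because removing $\delta'$ splits the triangulated polygon into two triangulated subpolygons, and the dual graph restricted to each side is connected. Consequently, the path between the ends $i$ and $j$ passes through $e_{\delta'}$ exactly when $\delta'$ separates the outer edges $i$ and $j$.

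Now fix $\delta$ from vertex $v$ to vertex $w$. Label the outer edges so that $k$ and $k+1$ meet at $v$ and $l$ and $l+1$ meet at $w$, going clockwise $k, k+1, \dots, l, l+1, \dots$. The diagonal $\delta$ separates $\{k+1,\dots,l\}$ from $\{l+1,\dots,k\}$, so $k$ and $l+1$ lie on one side and $k+1$ and $l$ on the other; this gives the grouping of the dual interpretation. Next, determine the common edges of the two paths. A diagonal $\delta'$ contributes if and only if it separates $k$ from $k+1$ and also separates $l+1$ from $l$. The only diagonals separating the two outer edges adjacent at $v$ are those with endpoint $v$. Likewise, the only diagonals separating $l$ from $l+1$ are those with endpoint $w$. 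A diagonal with both endpoints $v$ and $w$ is $\delta$ itself, so $e_\delta$ is the unique common edge. The common part therefore has length exactly $\lambda$, and the two paths meet, since they share $e_\delta$.

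I expect the main obstacle to be the rigorous form of the separation claim, namely that a diagonal $\delta'$ separates two consecutive outer edges meeting at a polygon vertex exactly when $\delta'$ ends at that vertex. I would prove it directly: the two sides of a diagonal contain the outer edges between its endpoints going clockwise and counterclockwise respectively. Two edges sharing the polygon vertex $u$ fall into different arcs iff $u$ is an endpoint of the diagonal. Some care is needed when $n=4$ and for border triangles, where an end is attached directly to a vertex adjacent to $e_\delta$, but the same argument covers these cases.
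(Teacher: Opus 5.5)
Your proof is correct, but it is organized somewhat differently from the paper's.

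The paper argues locally, at the quadrilateral formed by the two triangles $V,W$ adjacent to the diagonal. Each of the four markings of $C$ lies behind a different one of the four sides of $V\cup W$. Hence, in the dual tree, both paths run through the edge dual to the diagonal and separate at both of its endpoints (the vertices dual to $V$ and $W$). This pins down the common part without describing the paths any further.

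You instead first establish a global separation criterion: the path between two ends crosses $e_{\delta'}$ exactly when $\delta'$ separates the corresponding outer edges. You then describe each path completely. The path from $k$ to $k+1$ consists precisely of the edges dual to the diagonals incident to the polygon vertex where $k$ and $k+1$ meet, and likewise for $l+1$ and $l$. The intersection of these two fans is therefore $\{e_\delta\}$.

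Both arguments also show that each bounded edge contributes only to its own cross-ratio, which the paper later uses to get multiplicity $1$. Your version makes explicit the separation fact that the paper uses tacitly when it speaks of markings lying ``behind'' a side, and it gives the full shape of both paths. The paper's version is shorter, because it only needs to place each of the four ends in one of the four branches at the endpoints of $e_\delta$.
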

\begin{proof}
	We show that every bounded edge of the tropical curve only contributes to the cross-ratio of the corresponding diagonal in the $n$-gon. Consider a diagonal $\mathcal{D}$ and the corresponding cross-ratio $C=(\lambda,\cros{a}{d}{b}{c}).$ This diagonal is adjacent to two triangles, we call them $V$ and $W$ and the corresponding vertices in the tropical curve $v$ and $w$. These two triangles also each have two other sides, that are either diagonals or boundary edges of the $n$-gon. By definition of the cross-ratio by a diagonal interpreted in the dual way, the markings $a,$ $b,$ $c$ and $d$ must be behind the four sides of the triangles $V$ and $W$ that are not the diagonal $\mathcal{D}$ such that behind each of these four sides is exactly one of these markings. An example can be seen in Figure \ref{Dualproof}.
	
	So, only the bounded edge of the tropical curve corresponding to the diagonal $\mathcal{D}$ can contribute to the cross-ratio $C,$ as the paths from $a$ to $c$ and from $b$ to $d$ have this edge in common but part at the vertices $v$ and $w.$ 
	
	In this way, we see that every edge corresponds to exactly one cross-ratio defined by $T$ and all these cross-ratios are fulfilled, so the dual tropical curve is in $\pi_{\U(T)}^{-1}(C_1,...,C_{n-3}).$
	
	\begin{figure}[h]
		\centering
		\includegraphics[scale=0.5]{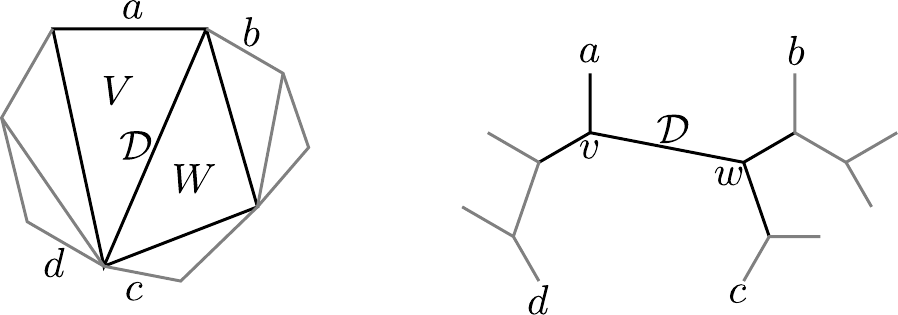}
		\caption{A diagonal in a triangulation and the corresponding edge in the dual tropical curve, see the proof of Proposition \ref{dualcurve}}
		\label{Dualproof}
	\end{figure}
	
\end{proof}

In general, there also can be other tropical curves fulfilling these same conditions. We now discuss how to find and construct all of these tropical curves.

\begin{example}\label{ex6a} Consider the triangulation $T$ depicted in Figure \ref{ex6aim}. The three cross-ratios defined by $T$ are $C_1=(\lambda_1,\cros{1}{2}{3}{6})$, $C_2=(\lambda_2,\cros{3}{4}{2}{5})$ and $C_3=(\lambda_3,\cros{5}{6}{1}{4})$. No matter how long the lengths of these cross-ratios are, as long as all are positive, there are always exactly two tropical curves  in $\pi_{\U(T)}^{-1}(C_1,C_2,C_3),$ see Definition \ref{mappi}. Both can be seen in Figure \ref{ex6aim}. The right one is the one dual to the triangulation. We call the left one the inverted one.  As here each edge contributes to exactly one cross-ratio, the lengths of the bounded edges are given by the lengths of these cross-ratios. Both tropical curves are of multiplicity 1 which can be seen easily, so we checked that, indeed, Theorem \ref{allcases} holds in this case.
	\begin{figure}
		\centering
		\includegraphics[scale=0.4]{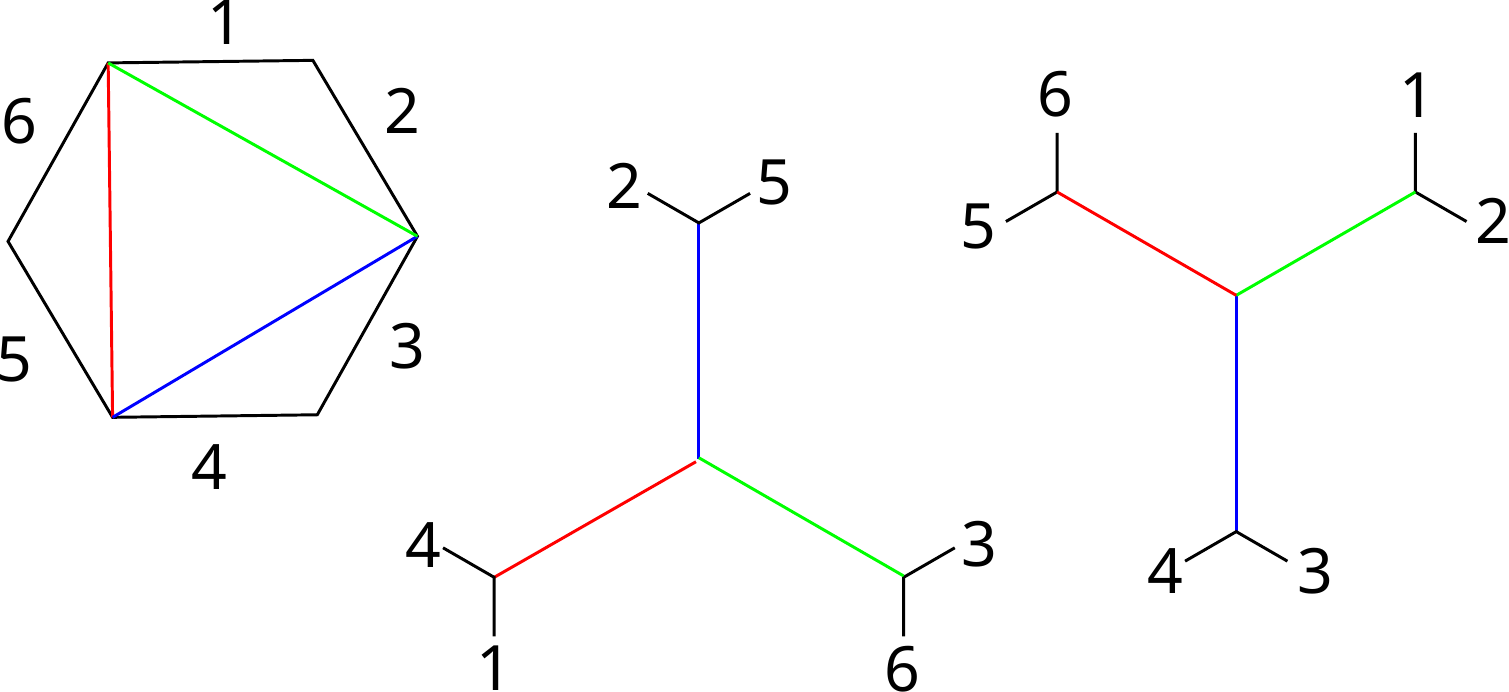}
		\caption{The two tropical curves that fulfill the cross-ratio conditions given by that triangulation of a hexagon in the dual interpretation, see Example \ref{ex6a}}
		\label{ex6aim}
	\end{figure}
\end{example}
\begin{example}\label{ex8}
	Now we consider a slightly larger example of a triangulation $T$ of an octagon, given in Figure \ref{ex8im}. The five cross-ratios defined by $T$ are $C_1=(\lambda_1,\cros{1}{2}{3}{8})$, $C_2=(\lambda_2,\cros{3}{4}{2}{5})$, $C_3=(\lambda_3,\cros{5}{6}{4}{7})$, $C_4=(\lambda_4,\cros{7}{8}{6}{1})$ and $C_5=(\lambda_5,\cros{1}{4}{5}{8})$. For any positive lengths of these cross-ratios, there are four abstract rational tropical curves in $\pi_{\U(T)}^{-1}(C_1,C_2,C_3,C_4),$. The first one is the one dual to the triangulation. This one and the other three can be seen in Figure \ref{ex8im}, where the lengths of the bounded edges are determined by the lengths of the diagonals. In the first and in the third row, we again have a one-to-one correspondence between bounded edges and cross-ratio lengths. In the other cases, the two branches that are depicted with the same shading pattern are glued together as far as determined by the cross-ratio lengths. For example, if $\lambda_2,$  depicted in light blue, is smaller than $\lambda_3,$ depicted in green, then the ends $4$ and $7,$ or $5$ and $6$ in the other case, meet in a vertex and the end $3$ or $2,$ respectively is attached to the branch of these ends. Otherwise, if $\lambda_2>\lambda_3$, the ends $3$ and $4$, and the ends $2$ and $5$ meet in a vertex and the ends $7$ and $6$ are attached to the branch of them. Analogously, it works for $\lambda_1$ and $\lambda_4.$ For $\lambda_2<\lambda_3$ and $\lambda_1<\lambda_4$, we obtain the tropical curves depicted in Figure \ref{ex8bim}.
	\begin{figure}
		\centering
		\includegraphics[scale=0.35]{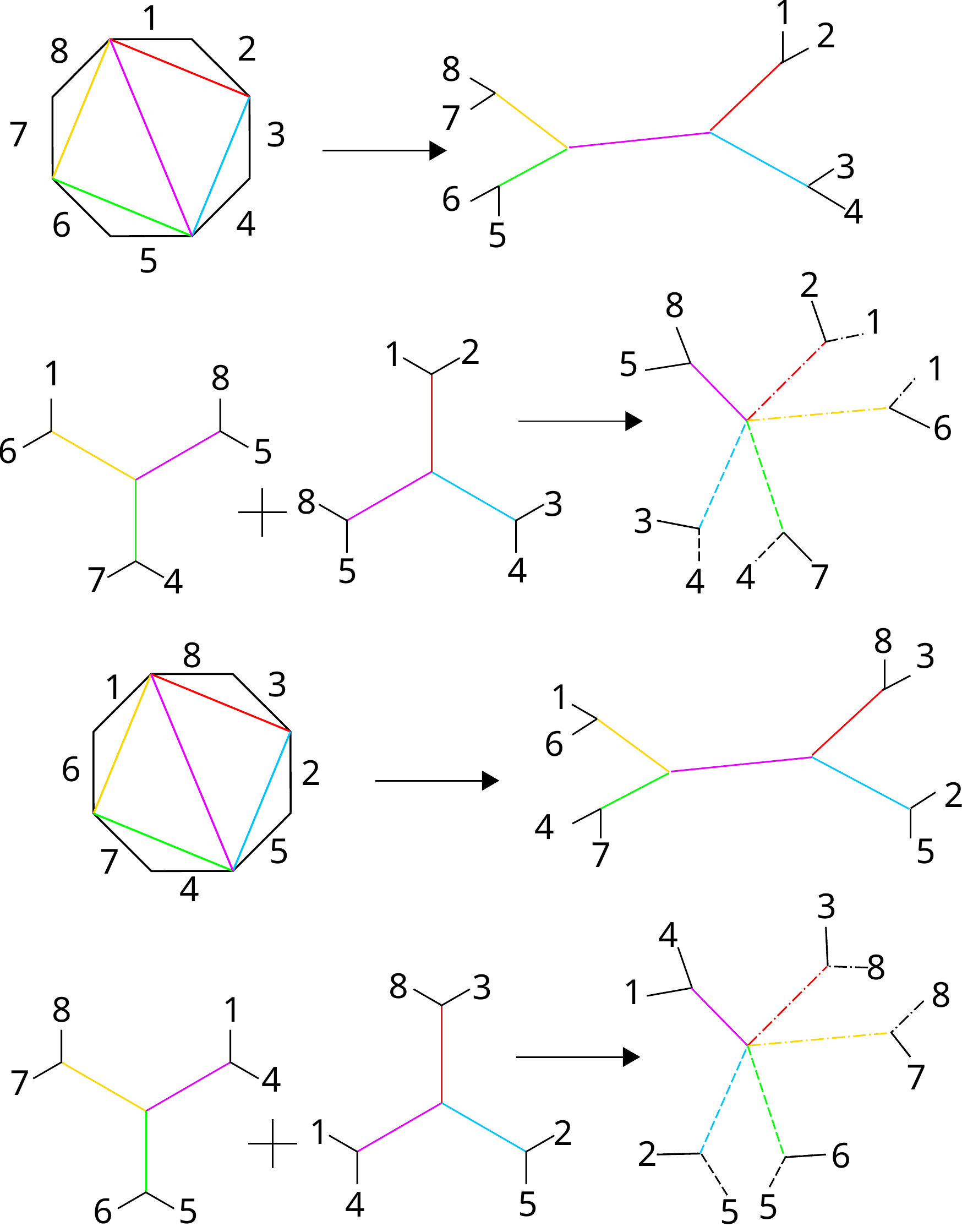}
		\caption{Constructing all tropical curves that fulfill the cross-ratio conditions given by an octagon with two inner triangles, see Example \ref{ex8}}
		\label{ex8im}
	\end{figure}
	\begin{figure}
		\centering
		\includegraphics[scale=0.35]{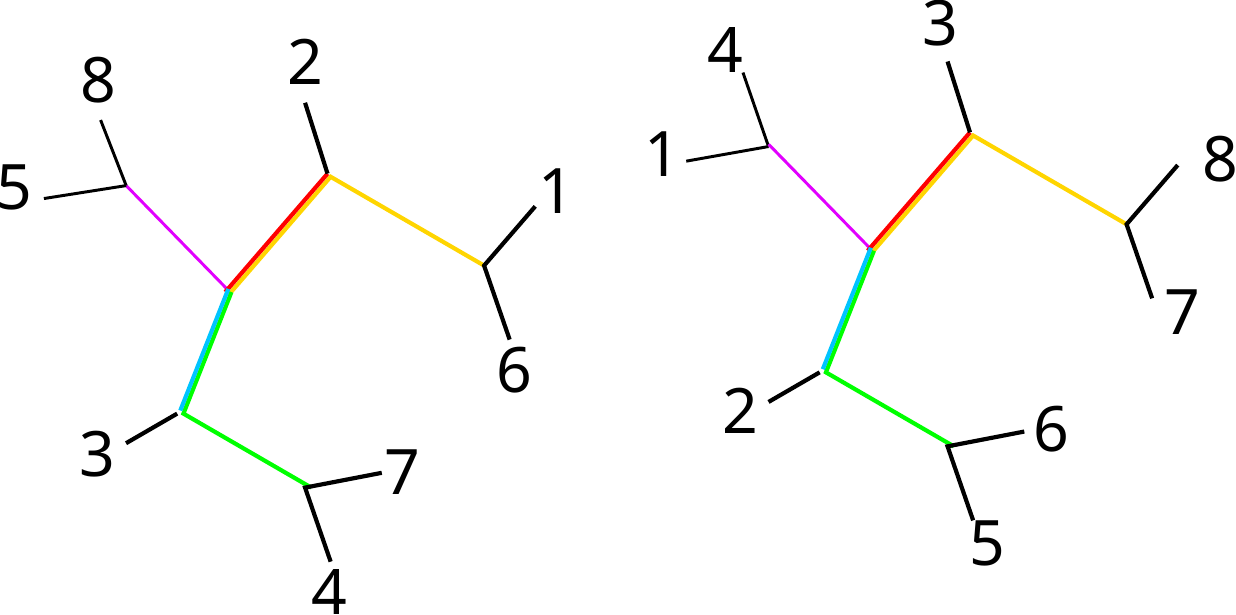}
		\caption{Two examples of tropical curves that fulfill the cross-ratio conditions of Example \ref{ex8}}
		\label{ex8bim}
	\end{figure}
\end{example}
\begin{construction}[Inverting $n$-gons without outer triangles]\label{constotallyinverted}
	For a triangulation without outer triangles, we can construct a tropical curve that we call \emph{totally inverted} by the following process. For all vertices of the $n$-gon, where a diagonal is ending, we change the markings of the two adjacent edges. As there are no outer triangles, this means that exactly every second corner has diagonals ending in it, so all edge markings swap places with either their right or left neighbor.
	Now, we draw the dual tropical curve to this triangulation with the new markings. The lengths of the bounded edges of the totally inverted tropical curve are again given by the lengths of the corresponding diagonals. As the underlying triangulation when ignoring the markings stays the same, the combinatorial type without the markings, and the lengths of the edges of the dual tropical curve and the totally inverted tropical curve are the same. 
	
\end{construction}
\begin{example}
	Examples of these totally inverted tropical curves are seen in the middle of Figure \ref{ex6aim}, in the third row on the right of Figure \ref{ex8im} and at the top right of Figure \ref{Inv_n-eck}. 
\end{example}
\begin{lemma}\label{lemmatotal}
	Let $T$ be a triangulation of an $n$-gon as defined in Definition \ref{defthreeint}. The dual tropical curve of Definition \ref{defdual} and the totally inverted tropical curve of Construction \ref{constotallyinverted} fulfill the same cross-ratio conditions, in the dual interpretation, see Definition \ref{defthreeint}, which are given by $T$.
\end{lemma}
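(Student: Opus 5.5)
The plan is to reduce the statement to Proposition \ref{dualcurve}, applied to the relabeled triangulation from Construction \ref{constotallyinverted}, and then to check that the relabeling leaves every dual cross-ratio unchanged. The totally inverted tropical curve is only defined in Construction \ref{constotallyinverted}, which requires $T$ to have no outer triangles, so I assume this throughout. For the dual tropical curve, the statement is exactly Proposition \ref{dualcurve}. So it remains to show that the totally inverted curve fulfills the dual cross-ratios $C_1,\dots,C_{n-3}$ of $T$.

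\textbf{Step 1: the relabeling is an involution on markings.} Let $T'$ be the triangulation with the same underlying diagonals and lengths as $T$, but with the new edge markings from Construction \ref{constotallyinverted}. Since there are no outer triangles, exactly every second corner of the $n$-gon is an endpoint of a diagonal. Hence the two edges at such a corner form a pair, and these pairs partition the $n$ edges. The relabeling is therefore a well-defined fixed-point-free involution $\sigma$ on $[n]$ that swaps the two markings at each such corner. By definition, the totally inverted curve is the dual tropical curve of $T'$. Proposition \ref{dualcurve}, applied to $T'$, then shows that it fulfills every cross-ratio of $T'$ in the dual interpretation, and each of these has the length of the corresponding diagonal.

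\textbf{Step 2: the cross-ratios of $T'$ agree with those of $T$.} Take a diagonal $\mathcal{D}$ of $T$, and let its endpoints be the corners between the edges $k,k+1$ and between $l,l+1$. Its dual cross-ratio in $T$ is $(\lambda,\cros{k}{l+1}{k+1}{l})$. Both endpoints of $\mathcal{D}$ are corners where a diagonal ends, so $\sigma$ swaps $k\leftrightarrow k+1$ and $l\leftrightarrow l+1$. In $T'$ the same diagonal therefore carries the dual cross-ratio $(\lambda,\cros{k+1}{l}{k}{l+1})$. By the symmetries in Remark \ref{remcr} we have $\cros{a}{b}{c}{d}=\cros{c}{d}{a}{b}$. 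With $a=k$, $b=l+1$, $c=k+1$, $d=l$ this gives
\[\cros{k}{l+1}{k+1}{l}=\cros{k+1}{l}{k}{l+1}.\]
So the two cross-ratios coincide, including their length $\lambda$.

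Consequently the set of dual cross-ratio conditions of $T'$ equals that of $T$, and the totally inverted curve lies in $\pi_{\U(T)}^{-1}(C_1,\dots,C_{n-3})$ together with the dual curve. The step I expect to need the most care is Step 1. One must argue cleanly that every corner touched by a diagonal has exactly the pair of edges that $\sigma$ swaps, and that the four markings attached to a diagonal in $T'$ are exactly the $\sigma$-images of those in $T$. Both facts rely on the absence of outer triangles, via the alternating-corner property already observed in Construction \ref{constotallyinverted}.
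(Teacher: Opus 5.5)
Your proposal is correct and follows essentially the same route as the paper. The paper also applies the argument of Proposition \ref{dualcurve} to the relabeled triangulation and observes that the swap turns $(\lambda,\cros{a}{b}{c}{d})$ into $(\lambda,\cros{c}{d}{a}{b})$, which is the same tropical cross-ratio; your restriction to triangulations without outer triangles and your check that the relabeling is a well-defined involution only make explicit what the paper leaves implicit.
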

\begin{proof}
	Analogously to the proof of Proposition \ref{dualcurve}, we see that for the totally inverted tropical curve every bounded edge contributes exactly to one cross-ratio. Also, as the triangulation when swapping the markings of the boundary edges does not change, the tropical curve without the marked ends does not change and we obtain a bijection between the totally inverted tropical curves and the dual tropical curves, where every bounded edge of the dual tropical curve and the corresponding one of the totally inverted tropical curve contribute to the same cross-ratio because when changing the markings of the edges of the $n$-gon as described, a cross-ratio of the form $(\lambda,\cros{a}{b}{c}{d})$ changes to $(\lambda,\cros{c}{d}{a}{b}),$ which is tropically the same.
\end{proof}
\begin{definition}[Dual and inverted tropical curves of an inner triangle]
	Let $T$ be a triangulation of an $n$-gon without outer triangles, where all cross-ratios are interpreted in the dual way. For an inner triangle consisting of the three cross-ratios $C_1=(\lambda_1,\cros{a}{b}{c}{f})$, $C_2=(\lambda_2,\cros{c}{d}{b}{e})$ and $C_3=(\lambda_3,\cros{e}{f}{a}{d}),$
	we consider the forgetful map $ft_{[n]\setminus \{a,b,c,d,e,f\}}.$ We say that the image under this map of the dual tropical curve of this triangulation is the \emph{dual tropical curve to this inner triangle} and that the image under this map of the totally inverted tropical curve of this triangulation is the \emph{inverted tropical curve to this inner triangle}. 
\end{definition}
\begin{remark}
	The dual and inverted tropical curves of an inner triangle look like the dual and the totally inverted tropical curve of a triangulation of a hexagon with markings $\{a,b,c,d,e,f\}$ as in Example \ref{ex6a} and Figure \ref{ex6aim}.
\end{remark}
\begin{construction}[Partially inverted tropical curve]\label{partialcurve}
	We can construct an $n$-marked tropical curve by gluing together the dual or inverted tropical curves to all inner triangles of this triangulation as follows. 
	
	First, we pick for each inner triangle if we want to take its inverted or its dual tropical curve. We say that two inner triangles have the same orientation if we chose the same for both. Now, consider two adjacent inner triangles --- these share a diagonal, and consider the two inverted or dual tropical curves to these inner triangles. As the inner triangles share a diagonal, these two 6-marked tropical curves share a bounded edge $e$. Now, we glue these two tropical curves together by identifying $e$ so that it is compatible with the four markings of the corresponding cross-ratio $C$ (remember, for these 6-marked tropical curves, every bounded edge contributes to exactly one cross-ratio), and then gluing the paths from the adjacent vertices of $e$ to the ends with markings in $C$. If both triangles have the same orientation, we obtain the dual or totally inverted tropical curve to an octagon with two inner triangles.
	
	This can also be seen in Figure \ref{ex8im}, where the branches with the same dashed pattern are glued together.
	
	Now, we continue to glue together all other dual or inverted tropical curves of the other inner triangles in order to obtain an $n$-marked tropical curve. This process also works when several $6$-marked tropical curves are already glued together because we can still identify the edge (or now possibly more edges) contributing to the common cross-ratio. We call this resulting tropical curve a \emph{partially inverted} tropical curve.
\end{construction}
\begin{remark}\label{2dcurves}
	If a triangulation of an $n$-gon without outer triangles has $d$ inner triangles, we obtain that $d=\frac{n}{2}-2$ and as for each inner triangle we can choose if we want to consider its dual or its inverted tropical curve, we obtain $2^d$ different tropical curves with the just described construction. These are all different tropical curves as they have different images under the forgetful maps $ft_{[n]\setminus \{a,b,c,d,e,f\}}.$ 
	
	If we choose the dual tropical curve for every inner triangle, we obtain the dual tropical curve of this triangulation and if we choose the inverted tropical curve for every inner triangle, we obtain the totally inverted tropical curve of this triangulation.
\end{remark}
\begin{proposition}\label{partialfulfills}
	Let $T$ be a triangulation of an $n$-gon without outer triangles, where all cross-ratios $C_1,...,C_{n-3}$ are interpreted in the dual way. A partially inverted tropical curve as constructed in Construction \ref{partialcurve} is in $\pi_{\U(T)}^{-1}(C_1,...,C_{n-3}),$ see Definition \ref{mappi}.
\end{proposition}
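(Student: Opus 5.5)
We have to show that a partially inverted tropical curve $\Gamma$ from Construction \ref{partialcurve} fulfills every cross-ratio $C_i=(\lambda_i,\cros{\cdot}{\cdot}{\cdot}{\cdot})$ defined by a diagonal of $T$ in the dual interpretation. The plan is to reduce each condition to a single inner triangle containing the diagonal, and to use the fact that forgetful maps preserve cross-ratios whose markings are kept (Lemma \ref{crasft}).

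Since $T$ has no outer triangles, every diagonal is adjacent to at least one inner triangle. It is adjacent to exactly one inner triangle if its other side is a border triangle, and to two if both sides are inner triangles. Fix a diagonal $\mathcal{D}$ with cross-ratio $C$, and an inner triangle $\Delta$ adjacent to it with markings $\{a,b,c,d,e,f\}$. The four markings of $C$ lie in this six-element set, because they are the boundary edges next to the endpoints of $\mathcal{D}$, and these endpoints are corners of $\Delta$.

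\textbf{Key step.} We show that $ft_{[n]\setminus\{a,b,c,d,e,f\}}(\Gamma)$ is exactly the dual or inverted tropical curve of $\Delta$ that was chosen in the construction. By Lemma \ref{crasft}, this image fulfills $C$ if and only if $\Gamma$ does. The dual tropical curve of $\Delta$ fulfills $C$ with length $\lambda$ by Proposition \ref{dualcurve}, applied to the hexagon. The inverted tropical curve of $\Delta$ fulfills $C$ by Lemma \ref{lemmatotal}, applied to the hexagon. So every $C_i$ is fulfilled.

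To prove the key step, we induct over the gluing order of Construction \ref{partialcurve}. When the $6$-marked curve of a new inner triangle $\Delta'$ is glued along the shared cross-ratio $C$, the gluing identifies only the edges contributing to $C$ and the paths from their endpoints to the four ends of $C$. The two new markings of $\Delta'$ are attached on branches of this identified subtree. Forgetting them therefore returns the previously glued curve. Conversely, forgetting all markings outside $\Delta$ erases every branch added later, and stabilization merges the subdivided pieces of $\Delta$'s edges back together. Lengths are preserved because the glued edges have total length equal to the common $\lambda$. This is the same argument as the inverse of $ft_Y\times ft_X$ in Lemma \ref{bijection}. One could even phrase each gluing step as an instance of that lemma, with $\{i_1,i_2,i_3\}$ taken from the markings of $C$, but the degenerate overlap of four shared markings needs care.

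\textbf{Main obstacle.} The delicate point is showing that the gluing is well defined and consistent, so that the identified paths really form a tree and the projection to each $\Delta$ is unchanged by later gluings. This matters especially when a diagonal is shared by two inner triangles of different orientation, where the images of one edge under the two $6$-marked curves must agree on which pairs of markings lie on each side. I would check this by noting, as in the proof of Lemma \ref{lemmatotal}, that inverting changes $\cros{a}{b}{c}{d}$ into $\cros{c}{d}{a}{b}$. Both curves therefore realize the same element of $\M_{0,4}^{\mathrm{trop}}$ for the shared four markings, so the gluing is compatible. Finally, general position guarantees that no length cancellations create non-trivalent vertices.
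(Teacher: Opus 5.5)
Your proposal is correct and follows essentially the same route as the paper. Both arguments use that every cross-ratio belongs to some inner triangle whose chosen $6$-marked dual or inverted piece fulfills it, and that gluing preserves this because edge lengths are only subdivided and no markings change; you make this precise by showing that the forgetful projection of the glued curve onto each inner triangle's six markings returns the chosen piece, and then applying Lemma \ref{crasft}.
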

\begin{proof}
	By gluing together some of the smaller tropical curves, cross-ratio conditions that were before fulfilled by these tropical curves are still fulfilled by the larger glued tropical curve because no lengths of bounded edges change, they can just be subdivided into several edges that sum up to the same length and also no markings are affected. Thus, as all cross-ratio conditions that are fulfilled by one or two of the 6-marked dual or inverted tropical curves of the inner triangles, all are still fulfilled after gluing them together. Also, all cross-ratios are used because all are part of some inner triangle.
	So, the resulting tropical curve is in $\pi_{\U(T)}^{-1}(C_1,...,C_{n-3}).$
\end{proof}

\begin{example}[Revisited \ref{ex6a}]
	In Figure \ref{ex8im}, we now observe for which inner triangles we chose the inverted or the dual tropical curve. With choosing the dual tropical curve for both inner triangles, we obtain the dual tropical curve for the triangulation, which is seen in the first row. In the second row, we choose the dual tropical curve for the right inner triangle and the inverted one for the left one. In the third row, for both inner triangles, the inverted tropical curve is chosen and thus, the resulting tropical curve is the totally inverted tropical curve of the triangulation. And finally, in the last row, for the left inner triangle the dual tropical curve is chosen and for the right one the inverted one.
\end{example}
\begin{example}\label{ex12}
	\begin{figure}
		\centering
		\includegraphics[scale=0.45]{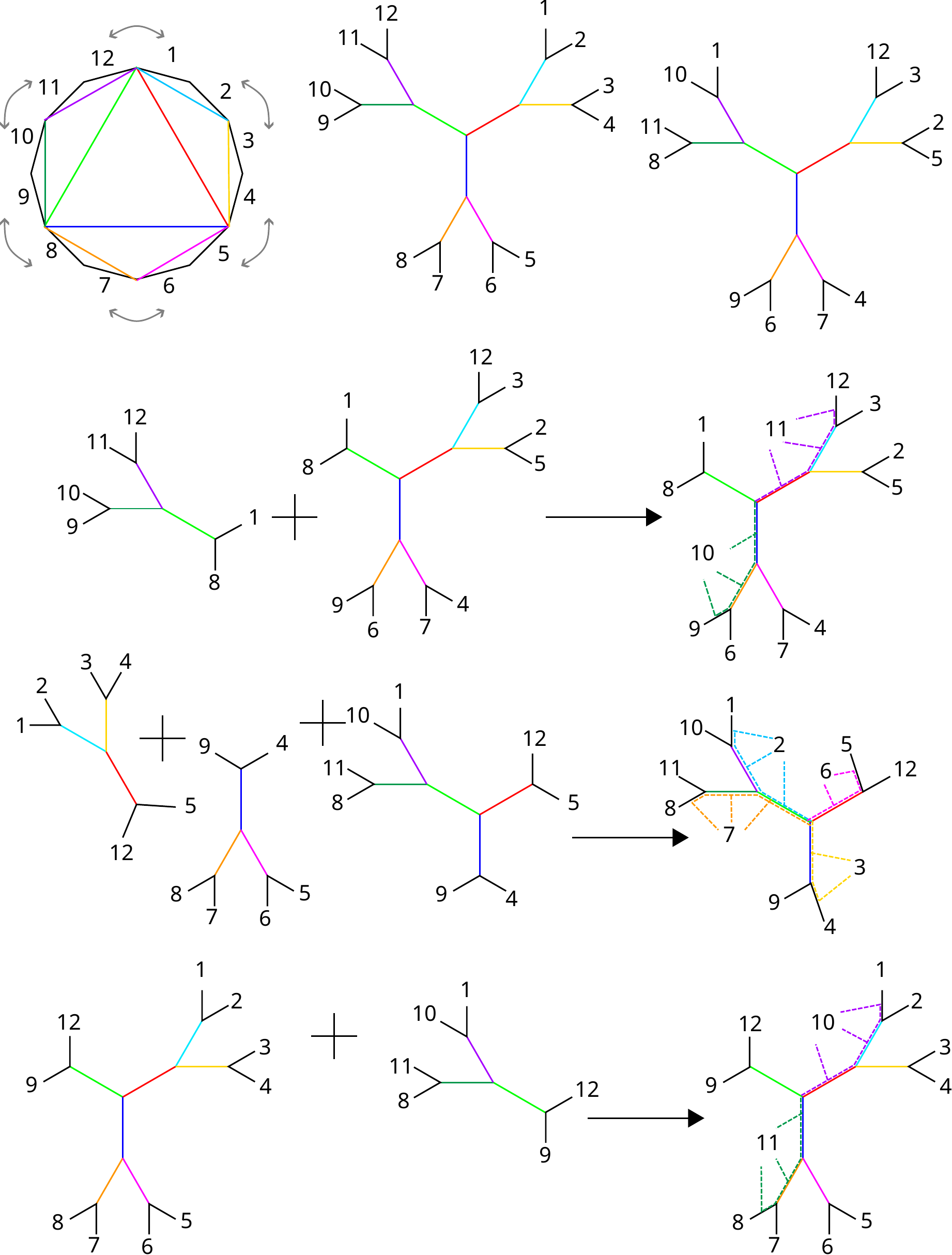}
		\caption{Constructing some partially inverted tropical curves that fulfill the cross-ratio conditions given by a triangulation of a dodecagon, see Example \ref{ex12}}
		\label{Inv_n-eck}
	\end{figure}
	Another sketch of constructing partially inverted tropical curves is found in Figure \ref{Inv_n-eck}, where in the first row we see the dual and the totally inverted tropical curve of the triangulation.
	In the other rows, we see three examples of partially inverted tropical curves, where different orientations are chosen. On the left of these sketches, for adjacent triangles with the same orientation, the corresponding dual or inverted tropical curves are already glued together and then in the last step, seen on the right, the remaining gluing is done. The dashed lines represent different positions on where the ends can be, depending on the lengths of the cross-ratios.
\end{example}

\begin{theorem}
	Let $T$ be a triangulation of an $n$-gon with $d$ inner triangles, where all cross-ratios $C_1,...,C_{n-3}$ are interpreted in the dual way, as in Definition \ref{defthreeint}. Then, there are always $2^d$ different abstract rational tropical curves of multiplicity 1 in $\pi_{\U(T)}^{-1}(C_1,...,C_{n-3}),$ see Definition \ref{mappi}.
\end{theorem}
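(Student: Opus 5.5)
The plan is to show that the $2^d$ partially inverted tropical curves of Construction \ref{partialcurve} are exactly the preimages, and that each has multiplicity $1$. The cleanest route combines this construction with the counting already established in Theorem \ref{allcases}.

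First, I reduce to the case without outer triangles. Corollary \ref{trop26} (via Lemma \ref{bijection}) splits $T$ into triangulations $T_1,\dots,T_k$ without outer triangles. This splitting is compatible with the dual interpretation, since it does not use the interpretation of the cross-ratios. It gives a multiplicity-preserving bijection between preimages for $T$ and tuples of preimages for the $T_i$. The inner triangles are distributed among the $T_i$, so the number of inner triangles adds up and the curve counts multiply. It therefore suffices to treat a triangulation without outer triangles, where $d=\frac n2-2$.

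Second, in that case, Proposition \ref{partialfulfills} shows that every partially inverted curve lies in $\pi_{\U(T)}^{-1}(C_1,\dots,C_{n-3})$. Remark \ref{2dcurves} shows that the $2^d$ choices of dual or inverted curve per inner triangle yield pairwise distinct curves, because their images under $ft_{[n]\setminus\{a,b,c,d,e,f\}}$ differ for some inner triangle. So there are at least $2^d$ preimages.

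Third, I bound the multiplicities and the total count. By Theorem \ref{allcases} with all cross-ratios in the dual interpretation, we have $k=0$, since no inner triangle is neighboring in all three cross-ratios. Hence the preimage consists of exactly $2^{0-0}\cdot 2^d$ curves of multiplicity $2^0=1$, that is, $2^d$ curves of multiplicity $1$. Combined with the second step, the preimage is precisely the set of partially inverted curves, each of multiplicity $1$.

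If a self-contained argument independent of Theorem \ref{allcases} is wanted, I would instead proceed by induction along Construction \ref{addingtri}. The d-d-d row of Table \ref{table} shows that the new ends $a,b$ must be attached on the branches $c,e$ or on $d,f$; these two options are exactly the dual and inverted choices for the new triangle. In both cases the column operation of Lemma \ref{case22} leaves a unit $2\times 2$ block, so the multiplicity is unchanged.

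I expect the main obstacle to be the exhaustiveness step, namely showing that no curve other than a partially inverted one satisfies the conditions. This is where the case analysis of where $a$ and $b$ can be attached is genuinely needed, rather than just the construction. Invoking Theorem \ref{allcases} sidesteps this obstacle, because it counts the preimages exactly.
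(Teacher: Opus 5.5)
Your proof is correct, but its key input differs from the paper's. The paper proves this appendix theorem without appealing to Theorem \ref{allcases}. After the same reduction to triangulations without outer triangles, it takes the $2^d$ partially inverted curves (Remark \ref{2dcurves}, Proposition \ref{partialfulfills}) and computes their multiplicities directly. In each 6-marked dual or inverted curve, every bounded edge contributes to exactly one cross-ratio, so its matrix is a permutation matrix. Gluing along shared cross-ratios then only requires the column subtractions of adjacent edges from Lemma \ref{bijection}, which preserve the determinant.

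You instead obtain the statement as the case $k=0$ of Theorem \ref{allcases}. The count should read $2^{d-k}=2^d$; your ``$2^{0-0}\cdot 2^d$'' is a slip. This already settles the statement for every $T$, so your Steps 1 and 2 are not needed for it. They only identify the preimage with the set of partially inverted curves. Your ``self-contained'' variant is just the d-d-d row of the proof of Theorem \ref{allcases}, not the appendix argument.

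The trade-off is as follows. Your route does close the exhaustiveness point you flag, which the paper's proof passes over: Proposition \ref{partialfulfills} and Remark \ref{2dcurves} only give \emph{at least} $2^d$ preimages. On the other hand, it makes the appendix result a corollary of the recursive case analysis, whereas the paper's aim there is an independent, non-recursive construction.

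You also need less than the full strength of Theorem \ref{allcases}. Three facts suffice: your Step 2, the value $d_{\U(T)}=2^d$ alone (also available from \cite{R24} via $d_\U=D_\U$), and the fact that every preimage of a point in general position has multiplicity at least $1$. Together they force exactly $2^d$ preimages, each of multiplicity $1$. This even makes the paper's explicit multiplicity computation unnecessary.
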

\begin{proof}
For any triangulation of an $n$-gon, we can split this $n$-gon up into smaller polygons as described in the proof of Corollary \ref{trop26} until there are no outer triangles left any more. At the end, these can be glued together again, as described in Lemma \ref{bijection}, so we can also w.l.o.g assume that $T$ has no outer triangles.	

Using Remark \ref{2dcurves} and Proposition \ref{partialfulfills}, we already know that in this setting there are $2^d$ tropical curves in $\pi_{\U(T)}^{-1}(C_1,...,C_{n-3})$. To prove this theorem we now only have to show that all of them are of multiplicity 1.

	For a set of cross-ratios in general position, we always obtain a tropical curve with only three-vertices. This tropical curve also has multiplicity 1, because the multiplicity of the dual or inverted tropical curves of the inner triangles is 1 as each edge contributes to exactly one cross-ratio and when gluing together two graphs of multiplicity 1 in the described way, we can observe in the same way as in Lemma \ref{bijection} that the multiplicity is not affected by performing a matrix operation with two columns corresponding to adjacent bounded edges on the path from the identified edges to the ends of the corresponding cross-ratio. 
	
	So, for each orientation of the inner triangles, there is exactly one tropical curve of multiplicity 1 fulfilling the demanded conditions and there are $2^d$ choices of orientation which tells us that for a triangulation on an $n$-gon with $d$ inner triangles, we obtain $2^d$ different tropical curves of multiplicity 1 each is in $\pi_{\U(T)}^{-1}(C_1,...,C_{n-3}).$
\end{proof}

\begin{remark}
	Let $T$ be a triangulation of an $n$-gon without outer triangles that defines the cross-ratios $C_1,...,C_{n-3},$ see Definition \ref{defthreeint}.
	Similarly to the method that was just described, we can also directly construct the tropical curves in $\pi_{\U(T)}^{-1}(C_1,...,C_{n-3})$ for all other interpretations of a cross-ratio, just in the case where all cross-ratios are interpreted in the dual way, we obtain the easier method of taking the dual or inverted tropical curves. 
	
	The tropical curve can be constructed as follows. For each inner triangle, with cross-ratios $C_i,$ $C_j$ and $C_k$ using the markings $a,b,c,d,e,f,$ we can take one of the two (or the one) $6$-marked tropical curves in $\pi_{\U(T')}^{-1}(C_i, C_j,C_k),$ where $T'$ is the triangulation of the hexagon with markings $a,b,c,d,e,f$ and the diagonals given by $C_i,$ $C_j$ and $C_k.$ How these $6$-marked tropical curves look like can also be seen in Table \ref{table} for any interpretation of the cross-ratios. Then we identify the shared edges as in the proof of the preceding Theorem. 
\end{remark}

\begin{remark}[Deflating $n$-gons]\label{inflate} 
	We look more closely at the case where all cross-ratios are interpreted in the neighboring way. Here, for every inner triangle that defines the cross-ratios $C_i,$ $C_j$ and $C_k$, we can take the edge graph that the corresponding three diagonals in the triangulation are forming. Then we add ends with markings to the ends of the diagonals and imagine the triangles to deflate and the diagonals to grow to the length that they are assigned to. If an inner triangle fulfills the triangle inequalities, its three diagonals become three bounded edges of the tropical curve where each diagonal is now part of two edges. This process can be seen in the first row of Figure \ref{inflateim}.
	If it does not fulfill the triangle inequalities, the two diagonals with the shorter lengths get cut at their common vertex and get glued onto the third one, beginning with the opposite vertices. In this case, there are two possibilities of which of the two markings at the common vertex of the shorter diagonals now stays with which of them, as seen in the middle and the bottom of Figure \ref{inflateim}.	
	
	These are now the unique 6-marked tropical curves in $\pi_{\U(T')}^{-1}(C_i, C_j,C_k),$ and now, we can construct the tropical curves in $\pi_{\U(T')}^{-1}(C_1, ...,C_{n-3})$ by identifying these 6-marked tropical curves along the edges of their common cross-ratios as described before in the case where all cross-ratios are interpreted in the dual way.
	\begin{figure}
		\centering
		\includegraphics[scale=0.4]{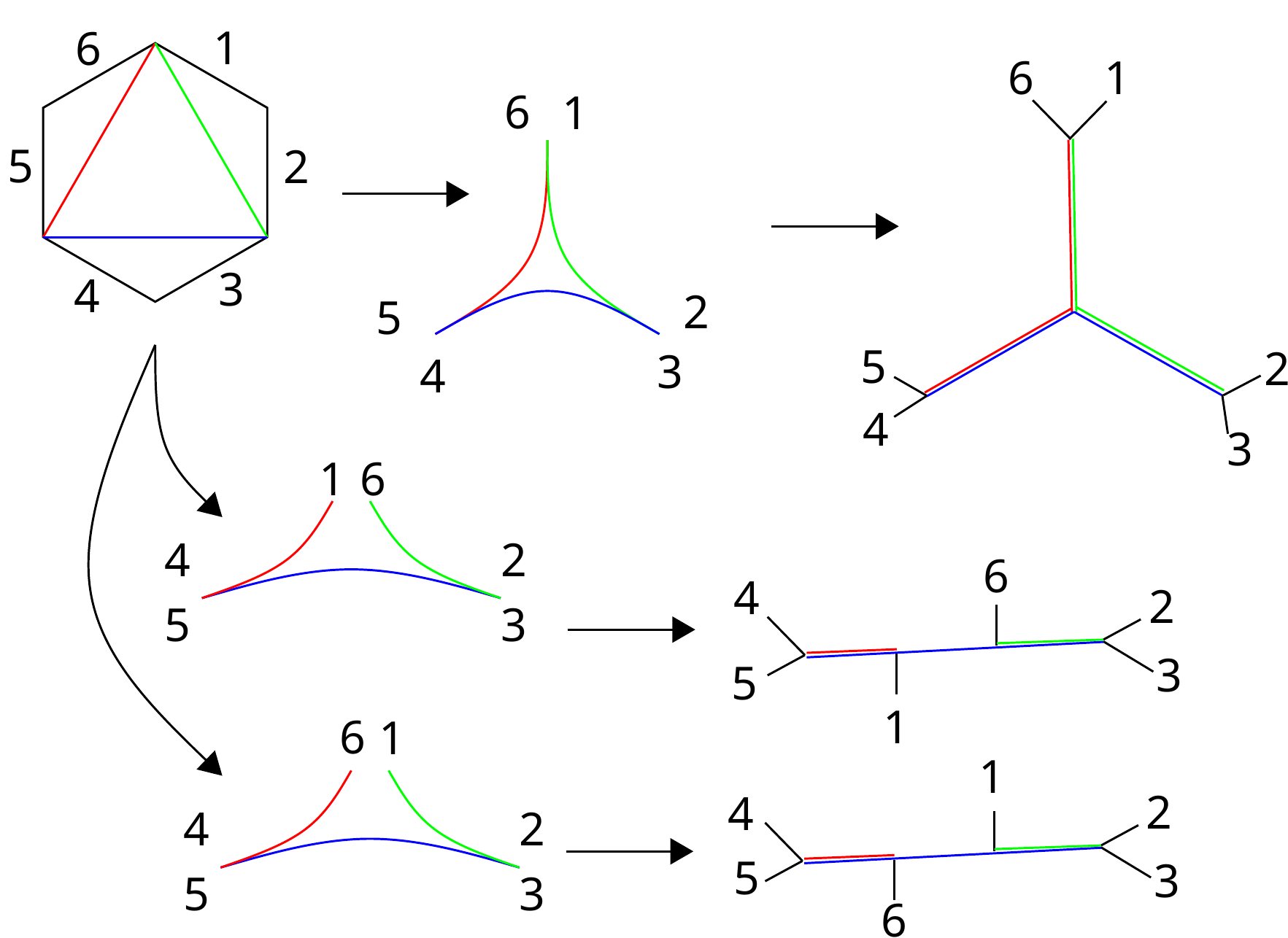}
		\caption{The tropical curves that fulfill the cross-ratio conditions given by that triangulation of a hexagon in the neighboring interpretation, see Example \ref{ex6b} and Remark \ref{inflate}}
		\label{inflateim}
	\end{figure}
\end{remark}
\end{document}